\documentclass{svmult}
\usepackage{type1cm}  
\usepackage{amsmath,amssymb,amsfonts,bbm}
\usepackage[varg,cmintegrals,cmbraces]{newtxmath}
\usepackage{newtxtext}
\usepackage[cal=euler]{mathalfa}

\usepackage{multicol,makeidx} 
\usepackage[bottom]{footmisc}   

\makeindex 

%%%%%%%%%%%%%%%%%%%%%%%%%%%%%% for diagrams
\usepackage{tikz-cd}
\tikzcdset{
arrow style=tikz,
diagrams={>={Straight Barb[scale=0.714]}}
}
\PassOptionsToPackage{normalem}{ulem}
\usepackage{ulem}
\usepackage{url}
\makeatletter

\author{Denis-Charles Cisinski}
\title*{Cohomological Methods in Intersection Theory}
\institute{D.-C. Cisinski \at Fakult\"at f\"ur Mathematik,
Universit\"at Regensburg, 93040 Regensburg, Germany\\ \email{denis-charles.cisinski@ur.de}}
%%%%%%%%%%%%%%%%%%%%%%%%%%%%%% Textclass specific LaTeX commands.
\spnewtheorem{thm}{\protect\theoremname}[subsection]{\bfseries}{\itshape}
\spnewtheorem{prop}[thm]{\protect\propositionname}{\bfseries}{\itshape}
\spnewtheorem{Lemma}[thm]{\protect\lemmaname}{\bfseries}{\itshape}
\spnewtheorem{cor}[thm]{\protect\corollaryname}{\bfseries}{\itshape}

\spnewtheorem{rem}[thm]{\protect\remarkname}{\itshape}{}
\spnewtheorem{notation}[thm]{Notations}{\itshape}{}
\spnewtheorem{Example}[thm]{\protect\examplename}{\itshape}{}
\spnewtheorem{xca}[thm]{\protect\exercisename}{\itshape}{}

\spnewtheorem{defn}[thm]{\protect\definitionname}{\bfseries}{}
\spnewtheorem{num}[thm]{\nocaption}{\bfseries}{}
\spnewtheorem{paragr}[thm]{\nocaption}{\bfseries}{}
\spnewtheorem{assumption}[thm]{\nocaption}{\bfseries}{}

\@ifundefined{date}{}{\date{}}
\makeatother

  \providecommand{\corollaryname}{Corollary}
  \providecommand{\definitionname}{Definition}
  \providecommand{\examplename}{Example}
  \providecommand{\exercisename}{Exercise}
  \providecommand{\propositionname}{Proposition}
  \providecommand{\remarkname}{Remark}
  \providecommand{\theoremname}{Theorem}
  \providecommand{\lemmaname}{Lemma}

\global\long\def\colim{\operatorname*{colim}}
\global\long\def\lim{\operatorname*{lim}}

\newcommand{\ZZ}{\mathbf{Z}}
\newcommand{\QQ}{\mathbf{Q}}

\newcommand{\Hom}{\mathrm{Hom}}
\newcommand{\uHom}{\mathit{Hom}}
\newcommand{\sHom}{\uHom}

\newcommand{\spec}[1]{\mathrm{Spec}(#1)}

\newcommand{\DM}{\mathit{DM}}
\newcommand{\h}{h}
\newcommand{\DMh}{\DM_\h}
\newcommand{\DMhlc}{\DM_{\h,lc}}
\newcommand{\uDM}{\underline{\DM}}
\newcommand{\eff}{\mathit{eff}}
\newcommand{\corr}{\mathit{MCorr}}

\begin{document}
\maketitle
\abstract{These notes are an account of a series of lectures I gave at the
LMS-CMI Research School `Homotopy Theory and Arithmetic Geometry: Motivic and Diophantine Aspects',
in July 2018, at the Imperial College London. The goal of these notes is to see how
motives may be used to enhance cohomological methods, giving natural ways to
prove independence of $\ell$ results and constructions of characteristic classes (as $0$-cycles). This leads to the Grothendieck-Lefschetz formula,
of which we give a new motivic proof.
There are also a few additions to what have been told in the lectures:
\begin{itemize}
\item A proof of Grothendieck-Verdier duality of \'etale motives on schemes of finite
type over a regular quasi-excellent scheme (which slightly improves
the level of generality in the existing literature).
\item A proof that $\QQ$-linear motivic sheaves are virtually
integral (Theorem \ref{virtual}). 
\item A proof of the motivic generic base change formula. 
\end{itemize}
I am grateful to Shachar Carmeli for having allowed me to
use the notes he typed from my lectures,
and to K\'evin Fran\c cois for finding a gap
in the proof of the motivic generic base formula. While
preparing these lectures and writing these notes,
I was partially supported by the SFB 1085 ``Higher Invariants'' funded by the Deutsche Forschungsgemeinschaft (DFG).}
\abstract*{These notes are an account of a series of lectures I gave at the
LMS-CMI Research School `Homotopy Theory and Arithmetic Geometry: Motivic and Diophantine Aspects',
in July 2018, at the Imperial College London. The goal of these notes is to see how
motives may be used to enhance cohomological methods, giving natural ways to
prove independence of $\ell$ results and constructions of characteristic classes (as $0$-cycles). This leads to the Grothendieck-Lefschetz formula,
of which we give a new motivic proof.
There are also a few additions to what have been told in the lectures:
\begin{itemize}
\item A proof of Grothendieck-Verdier duality of \'etale motives on schemes of finite
type over a regular quasi-excellent scheme (which slightly improves
the level of generality in the existing literature).
\item A proof that $\QQ$-linear motivic sheaves are virtually
integral (Theorem \ref{virtual}). 
\item A proof of the motivic generic base change formula. 
\end{itemize}
I am grateful to Shachar Carmeli for having allowed me to
use the notes he typed from my lectures,
and to K\'evin Fran\c cois for finding a gap
in the proof of the motivic generic base formula. While
preparing these lectures and writing these notes,
I was partially supported by the SFB 1085 ``Higher Invariants'' funded by the Deutsche Forschungsgemeinschaft (DFG).}

\section*{Introduction}
Let $p$ be a prime number and $q=p^r$ a power of $p$.
Let $X_{0}$ be a smooth and projective algebraic variety
over $\mathbf{F}_{q}$. It comes equipped with the geometric Frobenius map
$\phi_r:X\to X$,
where $X=X_{0}\times_{\mathbf{F}_{q}}\bar{\mathbf{F}}_{p}$,
so that the locus of fixed points of
$F$ corresponds to the set of rational points of $X_0$
(various Frobenius morphisms
and their actions are discussed in detail
in Remark \ref{rem:construction of Frobenius actions} below).
We may take the graph of Frobenius $\Gamma_{\phi_r}\subset X\times X$,
intersect with the diagonal, then interpret the intersection number
cohomologically with the formula of Lefschetz
through $\ell$-adic cohomology, with $\ell\neq p$.

For each $Z\subseteq X$ we can attach a cycle $[Z]\in H^{*}(X,\QQ_{\ell})$
and do intersection theory (interpreting geometrically the algebraic operations on cycle classes). 
For instance, if $Z'\subseteq X$ is another cycle which is transversal to $Z$, we have
$[Z]\cdot [Z']=[Z\cap Z']$. Together with Poincar\'e duality, this implies that the
number of rational points of $X_0$ may be computed cohomologically:
\[ \# X(\mathbf{F}_q)=\sum_i (-1)^i\, \mathrm{Tr}\big(\phi_r^*:H^i(X,\QQ_\ell)\to H^i(X,\QQ_\ell)\big)\, .\]
The construction of $\ell$-adic cohomology by Grothendieck
was aimed precisely at proving this kind of formulas, with the goal of proving Weil's conjectures
on the $\zeta$-functions of smooth and projective varieties over finite fields, which was finally achieved by Deligne~\cite{Weil1,Weil2}.

Here are two natural problems we would like to discuss: 
\begin{itemize}
\item Extend this to non-smooth or non-proper schemes and to
cohomology with possibly non-constant coefficients:
this is what the Grothendieck-Lefschetz formula
is about.
\item Address the problem of independance on $\ell$
(when we compute traces of endomorphisms with a less obvious
geometric meaning): this is what motives are made for.
\end{itemize}
In this series of lectures,
I will explain what is a motive and explain how to prove a motivic Grothendieck-Lefschetz formula. To be more precise, we shall work with \emph{$\h$-motives} over a scheme $X$,
which are one of the many descriptions of \'etale motives. These are the objects of the
triangulated category $\DMh(X)$ constructed and studied in details in \cite{CD4},
which is a natural modification (the non-effective version)
of an earlier construction of Voevodsky \cite{voe0},
following the lead of Morel and Voevodsky into the realm of
$\mathbf{P}^1$-stable $\mathbf{A}^1$-homotopy theory of schemes.
Although we will not mention them in these notes, we should mention that
there are other equivalent constructions of \'etale motives which
are discussed in \cite{CD4} and \cite{ay3} (not to speak of the
many models with $\QQ$-coefficients discussed in \cite{CD3}),
and more importantly, that
there are also other flavours
of motives \cite{FSV,kelly,CD5}, which are closer to geometry
(and further from topology), for which one can still prove
Lefschetz-Verdier formulas; see \cite{trace}.
As we will see later, \'etale motives with
torsion coefficients may be identified with classical \'etale sheaves.
In particular, when restricted to the case of torsion coefficients, all the
results discussed in these notes on trace formulas go back to
Grothendieck \cite{SGA5}. The case of rational coefficients has also
been studied previously to some extend by Olsson \cite{Olsson1,Olsson2}.
We will see here how
these fit together, as statements about \'etale motives with arbitrary
(e.g. integral) coefficients.
Finally, we will recall the Lefschetz-Verdier trace formula
and explain how to deduce from it
the motivic Grothendieck-Lefschetz formula,
using Bondarko's theory of weights and Olsson's computations of
local terms of the motivic Lefschetz-Verdier trace.

\section{\'Etale motives }

\subsection{The $\h$-topology}
\begin{defn}
A morphism of schemes $f:X\to Y$ is a \emph{universal topological isomorphism}
\index{universal!topological homeomorphism}
(\emph{epimorphism}\index{universal!topological epimorphism} resp.)
if for any map of schemes $Y'\to Y$, the pullback
$X'=Y'\times_{Y}X\to Y'$ is a homeomorphism (a topological epimorphism resp.,
which means that it is surjective and exhibits $Y'$ as a topological quotient).
\end{defn}
\begin{Example}
Surjective proper maps as well as faithfully flat maps all are universal epimorphisms.
\end{Example}
\begin{prop}
A morphism of schemes $f:X\to Y$ is a universal homeomorphism if
and only if it is surjective radicial and integral. Namely, $f$ is integral
and, for any algebraically closed field $K$, induces a bijection
$X(K)\cong Y(K)$.
\end{prop}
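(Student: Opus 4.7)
The plan is to prove both directions of the equivalence, first observing that the $K$-point condition is a reformulation of ``surjective and radicial.''

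Radicial (i.e.\ universally injective) is equivalent to $X(K) \to Y(K)$ being injective for every field $K$, and for this it suffices to test on algebraically closed fields. Likewise, surjectivity of $f$ on underlying topological spaces amounts to surjectivity of $X(K) \to Y(K)$ for every algebraically closed $K$, since each point of $Y$ admits an algebraically closed residue field extension. Thus ``surjective $+$ radicial'' is the same as ``$X(K) \to Y(K)$ is bijective for every algebraically closed $K$,'' so the ``namely'' clause is purely bookkeeping.

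For the implication ($\Leftarrow$), assume $f$ is integral, surjective, and radicial. All three properties are stable under base change, so it suffices to show $f$ itself is a homeomorphism. Integral morphisms are universally closed (by going-up for integral ring extensions), radicial means universally injective on points, and combined with surjectivity this gives a closed continuous bijection, which is automatically a homeomorphism.

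For the implication ($\Rightarrow$), assume $f$ is a universal homeomorphism. Surjectivity and universal injectivity (hence radiciality) are immediate, since every base change $X \times_Y Y' \to Y'$ is a homeomorphism, in particular a bijection. The main obstacle is integrality. Working Zariski-locally on $Y$, one reduces to the case $Y = \mathrm{Spec}(A)$ affine, where one must show that $X$ is affine with $\Gamma(X, \mathcal{O}_X)$ integral over $A$. Granted affineness, integrality of the extension $A \to \Gamma(X, \mathcal{O}_X)$ follows from the standard trick of applying universal closure to the projection $\mathbf{A}^1_X \to X$ to extract monic polynomial relations for each element. The technical heart is therefore the affineness of $X$, which I would deduce from Chevalley's criterion (a scheme receiving a finite surjective morphism from an affine scheme is affine) after exhibiting a suitable finite surjective radicial cover in the universal homeomorphism --- this is the classical content of EGA IV, 18.12.11, to which I would defer rather than reprove in detail.
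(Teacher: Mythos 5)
The paper states this proposition without proof, treating it as the classical fact from EGA IV (18.12.11), so there is no internal argument to compare yours with; I can only assess your sketch on its own terms. Your easy direction is fine: integrality, surjectivity and radiciality are stable under base change, integral morphisms are universally closed, radicial means universally injective, and a closed continuous bijection is a homeomorphism. The reformulation via algebraically closed field points is also essentially right, but be careful with the sentence claiming that topological surjectivity ``amounts to'' surjectivity on $K$-points: that biconditional is false in isolation (e.g.\ $\mathrm{Spec}(k(t))\to\mathrm{Spec}(k)$ is surjective, yet the tautological $\bar{k}$-point of the base does not lift). What rescues the statement is radiciality (or integrality): it forces the residue field extension at the unique point of each fibre to be purely inseparable, in particular algebraic, hence to embed into any algebraically closed $K$ containing $\kappa(y)$, and that is exactly how the $K$-point lifts. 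This interaction deserves a sentence, since it is the whole content of the ``namely'' clause.

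The genuine gap is in the hard direction, precisely at the step you call the technical heart. Granting affineness, your extraction of monic equations from universal closedness (the standard $1-tb$ argument, applied to the base change of $f$ along $\mathbf{A}^1_Y\to Y$ rather than to the projection $\mathbf{A}^1_X\to X$, a small slip) is correct. But the affineness plan does not work as stated: Chevalley's criterion concludes that the \emph{target} of a finite surjective morphism with affine \emph{source} is affine, whereas here the affine scheme is the target $Y$ and the unknown is the source $X$; there is no evident finite surjective morphism from an affine scheme onto $X$ to feed into it --- producing one is essentially the integrality you are trying to prove. Moreover, deferring to EGA IV 18.12.11 is circular, since that is exactly the statement being proved. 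A non-circular deferral would be to the dedicated lemma that a universal homeomorphism is affine (EGA IV 18.12.10, or Stacks Project, Tag 04DE); alternatively, one can argue directly: $f$ is quasi-compact and universally closed, and it is separated because its diagonal is a surjective immersion (surjectivity of the diagonal being equivalent to universal injectivity), hence a closed immersion. In the finite type case $f$ is then proper and quasi-finite, hence finite, so affine and integral; the general case follows by a limit argument or by citing the lemma above. As written, the forward implication is asserted rather than proved.
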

\begin{Example}
The map $X_{red}\to X$ is a universal homeomorphism. 
\end{Example}
\begin{Example}
Let $K'/K$ be a purely inseparable extension of fields.
If $X$ is a normal scheme with field of functions $K$, and if $X'$ is the
normalization of $X$ in $K'$, then the induced map $X'\to X$ is a universal homeomorphism.
\end{Example}

Following Voevodsky~\cite{voe0}, we can define the
\emph{$\h$-topology}\index{h-topology@$\h$-topology}
as the Grothendieck topology
on noetherian schemes associated to the pre-topology whose coverings are finite families
$\{X_{i}\to X\}_{i\in I}$ such that the induced map
$\coprod_{i}X_{i}\to X$ is a universal epimorphism.\footnote{As
shown by D.~Rydh~\cite{Rydh},
this topology can be extended to all schemes, at the price of adding compatiblities
with the constructible topology.} Beware that the $\h$-topology is not subcanonical: any universal
homeomorphism becomes invertible locally for the $\h$-topology.

Using Raynaud-Gruson's flatification theorem, one shows
the following; see~\cite{Rydh}.
\begin{thm}
(Voevodsky, Rydh): Let $X_{i}\to X$ be an $h$-covering. Then there
exists an open Zariski cover $X=\cup_{j}X_{j}$ and for each $j$
a blow-up $U_{j}'=Bl_{Z_{j}}U_{j}$ for some closed subset $Z_{j}\subseteq U_{j}$,
a finite faithfully flat $U_{j}''\to U_{j}'$ and a Zariski covering
$\{V_{j,\alpha}\}_{\alpha}$ of $U_{j}''$ such that we have a dotted arrow making
the following diagram commutative. 

\[
\begin{tikzcd}
\coprod_{j,\alpha}V_{j,\alpha}\ar{d}\ar[dotted]{rrr} & & & \coprod_{i}X_{i}\ar{d}\\
\coprod_{j}U_{j}''\ar{r} &
\coprod_{j}U_{j}'\ar{r} &  
\coprod_{j}U_{j}\ar{r} & X
\end{tikzcd}
\]
\end{thm}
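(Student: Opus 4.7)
The plan is to apply the Raynaud-Gruson flatification theorem to the surjection $f \colon Y := \coprod_i X_i \to X$, and then cut the resulting faithfully flat morphism down to a finite faithfully flat one. Since $X$ is noetherian and quasi-compact, we may replace the indexing set $I$ by a finite subfamily without losing surjectivity, so that $f$ is of finite presentation and surjective (being a universal epimorphism).

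First, by generic flatness, there is a dense open $V \subseteq X$ over which $f$ is flat. After passing to an affine Zariski cover $X = \bigcup_j U_j$, Raynaud-Gruson's flatification theorem yields blow-ups $\pi_j \colon U_j' = \mathrm{Bl}_{Z_j}(U_j) \to U_j$ in nowhere dense centres $Z_j \subseteq U_j \setminus V$ such that the strict transform $f_j' \colon Y_j' \to U_j'$ of $f|_{U_j}$ becomes flat. Each $f_j'$ remains surjective (the $Z_j$ are nowhere dense and $f$ is surjective over $V$), hence is faithfully flat and of finite presentation.

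Next, from the faithfully flat, finitely presented surjection $f_j'$ we extract a finite faithfully flat refinement $U_j'' \to U_j'$ together with a factorization $U_j'' \to Y_j'$. This step rests on a quasi-section argument in the style of EGA IV 17.16.3 (itself a consequence of Raynaud-Gruson): $f_j'$ admits a faithfully flat quasi-finite quasi-section $T \to U_j'$ through $Y_j'$, and then, after further Zariski localization on $U_j'$, Zariski's main theorem allows one to replace $T$ by a finite faithfully flat $U_j'' \to U_j'$ still factoring through $Y_j'$. Finally, since $Y = \coprod_i X_i$ and each $X_i$ is open (and closed) in $Y$, the preimages $U_j'' \times_Y X_i$ form a finite open Zariski cover $\{V_{j,\alpha}\}_\alpha$ of $U_j''$ with each $V_{j,\alpha}$ landing in a single $X_i$, which gives the required dotted arrow and makes the diagram commute.

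The main obstacle is the third step: extracting a finite faithfully flat refinement from an arbitrary faithfully flat one is not a formal manipulation, and relies on the local structure of flat morphisms together with Zariski's main theorem in order to replace a mere quasi-finite quasi-section by a genuinely finite one. The flatification in step two is a direct invocation of Raynaud-Gruson, and the refinement in the last step is bookkeeping once the open-closed decomposition of $Y = \coprod_i X_i$ is used.
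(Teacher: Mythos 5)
Your overall plan (flatify with Raynaud--Gruson, then refine the resulting fppf morphism Zariski-locally by a finite locally free covering, and finally use the decomposition $Y=\coprod_i X_i$ to produce the dotted arrow) is exactly the route the paper's one-line hint points to, and your first and third steps are fine. The genuine gap is in the middle: nowhere do you use that $\coprod_i X_i\to X$ is a \emph{universal topological epimorphism}; you only use surjectivity, finite type and generic flatness, and under those hypotheses alone the conclusion is false, so some step must be unjustified. The step is the claim that the strict transform $f'_j\colon Y'_j\to U'_j$ ``remains surjective because $Z_j$ is nowhere dense and $f$ is surjective over $V$''. That reasoning only shows the image of $f'_j$ is a dense \emph{open} subset of $U'_j$ (open because $f'_j$ is flat of finite presentation). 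Concretely, take $X=\mathbf{A}^2$ and $Y=(\mathbf{A}^2\setminus\{0\})\sqcup\{0\}$: this is surjective, of finite type and flat over the dense open $\mathbf{A}^2\setminus\{0\}$, yet the strict transform under the blow-up of the origin is the complement of the exceptional divisor --- flat, dense open image, not surjective. Worse, this $Y\to X$ admits no refinement of the displayed shape at all: a nonempty $V_{j,\alpha}$ meeting the fibre over the origin would have to map into the component $\{0\}$ of $Y$, so a nonempty open of the quasi-finite flat $U''_j$ would have open, nonempty image inside the nowhere dense subset $\pi_j^{-1}(0)\subset U'_j$, which is impossible. Of course this $Y\to X$ is not an $h$-covering (it is not submersive), but it satisfies every hypothesis your argument actually invokes; so surjectivity of the flatified strict transform is precisely where the $h$-hypothesis must enter. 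The standard repair: an $h$-covering of a noetherian scheme lifts specializations after arbitrary base change (universal subtrusiveness, by Rydh's comparison of submersive and subtrusive morphisms); every point of $U'_j$ is a specialization of a point lying over the complement of the exceptional divisor, the strict transform is closed in $Y\times_{U_j}U'_j$ and contains everything over that complement, hence it surjects. Without an argument of this kind the proof does not go through.

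There is a second gap at the very first line: generic flatness requires the base to be reduced, and the paper insists that universal homeomorphisms such as $X_{red}\to X$ are $h$-coverings; for $X=\mathrm{Spec}(k[\epsilon])$ the covering $X_{red}\to X$ is flat over no nonempty open, so your dense open $V$ need not exist. Nor is passing to $X_{red}$ a mere formality here, because the conclusion demands blow-ups of open subschemes of $X$ itself, and over $\mathrm{Spec}(k[\epsilon])$ no nonempty open of a finite flat $X$-scheme factors through $X_{red}$ (flatness forces $\epsilon$ to stay a nonzero section), while every blow-up of $X$ with nonempty centre is empty; so the non-reduced case, i.e.\ exactly the part of the $h$-topology invisible to flat coverings and blow-ups, has to be confronted explicitly rather than absorbed silently into ``generic flatness''. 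Finally, a smaller point: your finite flat quasi-section is produced only Zariski-locally on the blow-up $U'_j$, so what you actually construct has an extra Zariski covering interposed between the blow-up and the finite flat map; this is harmless for the descent statement the theorem is meant to justify, but it is not literally the displayed normal form, and you should either explain how to remove it or note that it suffices for the intended application.
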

This means that the property of descent with respect to the $\h$-topology is exactly the property
of descent for the the Zariski topology, together with proper descent.
\begin{rem}
Although Grothendieck topologies where not invented yet,
a significant amount of the results of SGA~1 \cite{SGA1}
are about $h$-descent of \'etale sheaves (and this is one of the reasons
why the very notion of descent was introduced in SGA~1).
This goes on in SGA~4 \cite{SGA4} where the fact that proper surjective
maps and \'etale surjective maps are morphism of universal cohomological
descent is discussed at length. However, it is only in Voevodsky's thesis
\cite{voe0} that the $h$-topology is defined and studied properly, with
the clear goal to use it in the definition of a triangulated
category of \'etale motives.
\end{rem}
\subsection{Construction of motives, after Voevodsky}
\begin{paragr}
Let $\Lambda$ be a commutative ring. Let $Sh_{\h}(X,\Lambda)$ denote
the category of sheaves of $\Lambda$-modules on the category of separated
schemes of finite type over $X$ with respect to the $\h$-topology. We have
Yoneda functor
\[
Y\mapsto\Lambda(Y)\, ,
\]
where $\Lambda(Y)$ is the $\h$-sheaf associated to the presheaf $\Lambda[\Hom_{X}(-,Y)]$
(the free $\Lambda$ module generated by $\Hom_{X}(-,Y)$).

Let us consider the derived category $D(Sh_{\h}(X,\Lambda))$, i.e. the localization
of complexes of sheaves by the quasi-isomorphisms.
Here we will speak the language of $\infty$-categories
right away.\footnote{We refer to \cite{HTT,HA} in general.
However, most of the literature on motives is written using
the theory of Quillen model structures. The precise way to
translate this language to the one of $\infty$-categories
is discussed in Chapter 7 of \cite{hcha}.}
In particular, the word `localization' has to be interpreted
higher categorically (if we take as models simplicial categories, this is
also known as the Dwyer-Kan localization). 
That means that $D(Sh_{\h}(X,\Lambda))$ is in fact a stable $\infty$-category
with small limits and colimits (as is any localization of a stable
model category). Moreover, the constant sheaf functor turns it into
an $\infty$-category enriched in the monoidal stable $\infty$-category
$D(\Lambda)$ of complexes of $\Lambda$-modules (i.e. the localization of the
category of chain complexes of $\Lambda$-modules by the class of quasi-isomorphisms).
In particular, for any objects $\mathcal{F}$ and $\mathcal{G}$ of
$D(Sh_{\h}(X,\Lambda))$, morphisms from $\mathcal{F}$ to $\mathcal{G}$ form
an object $\Hom(\mathcal{F},\mathcal{G})$ of $D(\Lambda)$. The appropriate
version of the Yoneda Lemma thus reads:
\[\Hom(\Lambda(Y),\mathcal{F})\cong \mathcal{F}(Y)\]
for any separated $X$-scheme of finite type $Y$.
In particular, $H^i(Y,\mathcal{F})=H^i(\mathcal{F}(Y))$ is what the old fashioned
literature would call the $i$-th hypercohomology group of $Y$ with coefficients
in $\mathcal{F}$.
\end{paragr}
\begin{paragr}
A sheaf $\mathcal{F}$
is called \emph{$\mathbf{A}^{1}$-local}\index{A1-local@$\mathbf{A}^{1}$-local (sheaf)}
if $\mathcal{F}(Y)\to\mathcal{F}(Y\times\mathbf{A}^{1})$
is an equivalence for all $Y$.
A map $f:M\to N$ is an
\emph{$\mathbf{A}^{1}$-equivalence}\index{A1-equivalence@$\mathbf{A}^{1}$-equivalence}
if for every $\mathbf{A}^{1}$-local $\mathcal{F}$ the map 
\[
f^{*}:\Hom(N,\mathcal{F})\to \Hom(M,\mathcal{F})
\]

is an equivalence. 

Define 
\[
\uDM_h^{\eff}(X,\Lambda)
\]
 to be the localization of $D(Sh_{\h}(X,\Lambda))$ with respect to $\mathbf{A}^{1}$-equivalences.
 We have a localization functor $D(Sh_{h}(X,\Lambda))\to \uDM_{h}^{\eff}(X,\Lambda)$
with fully faithfull right adjoint whose essential image consists of the $\mathbf{A}^{1}$-local
objects. An explicit description of the right adjoint is by taking the total complex
of the bicomplex
\[
C_{*}(\mathcal{F})(Y)=\cdots\to\mathcal{F}(Y\times\Delta_{\mathbf{A}^{1}}^{n})\to\cdots\to\mathcal{F}(Y\times\Delta_{\mathbf{A}^{1}}^{1})\to\mathcal{F}(Y)\, ,
\]
where $\Delta_{\mathbf{A}^{1}}^{n}=Spec(k[x_{0},\ldots,x_{n}]/(x_{0}+\cdots+x_{n}=1))$. 
The $\infty$-category $\uDM_h^{\eff}(X,\Lambda)$ comes equipped with a canonical functor
\[\gamma_X:Sch/X\times D(\Lambda)\to \uDM_h^{\eff}(X,\Lambda)\]
defined by $\gamma_X(Y,K)=\Lambda(Y)\otimes_\Lambda K$.
Furthermore, it is a presentable $\infty$-category (as a left Bousfield localization
of a presentable $\infty$-category, namely $D(Sh_{\h}(X,\Lambda))$), and thus
has small colimits and small limits.
For a cocomplete $\infty$-category $C$, the category of colimit preserving functors
$\uDM_h^{\eff}(X,\Lambda)\to C$ is equivalent to the category of functors
$F:Sch/X\times D(\Lambda)\to C$ with the following two properties:
\begin{itemize}
\item For each $X$-scheme $Y$, the functor $F(Y,-):D(\Lambda)\to C$ commutes with
colimits.
\item For each complex of $\Lambda$-modules $K$, we have:
\begin{itemize}
\item[a)] the first projection induces an equivalence $F(Y\times\mathbf{A}^1,K)\cong F(Y,K)$
for any $X$-scheme $Y$;
\item[b)] for any $h$-hypercovering $U$ of $Y$, the induced map
$\colim_{\Delta^{\mathit{op}}}F(U,K)\to F(Y,K)$ is invertible.
\end{itemize}
\end{itemize}
The functor $\uDM_h^{\eff}(X,\Lambda)\to C$ associated to such an $F$
is constructed as the left Kan extension of $F$ along $\gamma_X$.

There is still an issue. Indeed, let $\infty\in\mathbf{P}^{1}$ and let us form the following cofiber sequence:
\[
\Lambda(X)\stackrel{\infty}{\to}\Lambda(\mathbf{P}^{1})\stackrel{}{\to}\Lambda(1)[2]
\]

In order to express Poincar\'e duality (or, more generally, Verdier duality),
we need the cofiber $\Lambda(1)[2]$ above to be
$\otimes$-invertible. But it is not so. 
\end{paragr}
\begin{defn}
An object $A\in C$ is
\emph{$\otimes$-invertible}\index{tensor-invertible@$\otimes$-invertible}
if the functor $A\otimes-:C\to C$
is an equivalence of $\infty$-categories. 
\end{defn}

We want to invert a non-invertible object. Let us think about the case
of a ring. 

\[
R[f^{-1}]=\colim(R\stackrel{f}{\to}\stackrel{}{R\stackrel{f}{\to}\cdots})
\]
(The colimit is taken within $R$-modules.)
For $\infty$-categories, we define $C[A^{-1}]$
with a similar colimit formula. Note however that
the colimit needs to be taken in the category of presentable $\infty$-categories
(in which the maps are the colimit preserving functors).
We get an explicit description of this colimit as follows.
For $C$ presentable, $C[A^{-1}]$
can be described as the limit of the diagram
\[
\cdots\xrightarrow{\uHom(A,-)}C\xrightarrow{\uHom(A,-)}C\xrightarrow{\uHom(A,-)}C
\]
in the $\infty$-category of $\infty$-categories (here, $\uHom(A,-)$ is the
right adjoint of the functor $A\otimes -$).
Therefore, an object in $C[A^{-1}]$ is typically a sequence $(M_{n},\sigma_{n})_{n\geq 0}$
with $M_n$ objects of $C$ and
$\sigma_{n}:M_{n}\stackrel{\sim}{\to}\uHom(A,M_{n+1})$
equivalences in $C$. Note that, in the case where $A$ is the circle in the $\infty$-category of pointed
homotopy types, we get exactly the definition of an
$\Omega$-spectrum from topology. There is a canonical functor
\[\Sigma^\infty:C\to C[A^{-1}]\]
which is left adjoint to the functor
\[\Omega^\infty:C[A^{-1}]\to C\]
defined as $\Omega^\infty(M)=M_0$ where $M=(M_{n},\sigma_{n})_{n\geq 0}$ is a sequence
as above.

There is still the issue of
having a natural symmetric monoidal structure on $C[A^{-1}]$, which is not automatic.
However, if the cyclic permutation
acts as the identity on $A^{\otimes 3}$ (by permuting the factors) in the homotopy category
of $C$, then there is a unique symmetric monoidal structure on $C[A^{-1}]$ such that
the canonical functor $\Sigma^\infty:C\to C[A^{-1}]$ is
symmetric monoidal (all these issues are very well explained
in Robalo's \cite{robalo}).
Fortunately for us, Voevodsky proved that this extra property
holds for $C=\uDM_h^{\eff}(X,\Lambda)$ and $A=\Lambda(1)$.

\begin{defn}
The big category of $h$-motives\index{h-motive@$h$-motive}
is defined as:
\[
\uDM_{h}(X,\Lambda)=\uDM_{h}^{\eff}(X,\Lambda)[\Lambda(1)^{-1}].
\]
\end{defn}

\begin{rem}
However, what is important here is the universal property
of the stable $\infty$-category $\uDM_{h}(X,\Lambda)$; given a cocomplete $\infty$-category $C$,
together with an equivalence of categories $T:C\to C$
each colimit preserving functor
$\varphi:\uDM_{h}^\eff(X,\Lambda)\to C$ equipped with an invertible
natural transformation $\varphi(M\otimes\Lambda(1)[2])\cong T(\varphi(M))$
is the composition of a unique colimit preserving functor
$\Phi:\uDM_{h}(X,\Lambda)\to C$ equipped with an invertible
natural transformation $\Phi(M\otimes\Sigma^\infty\Lambda(1)[2])\cong T(\Phi(M))$.
\end{rem}

\begin{rem}
{We are very far from having locally constant sheaves here!} In classical settings, the Tate object
$\Lambda(1)$ is locally constant (more generally, for a smooth and proper map $f:X\to Y$
we expect each cohomology sheaf $R^if_*(\Lambda)$ to be locally constant). However the special case
of the projective line shows that we cannot have such a property motivically:
Over field $k$, the cohomology with coefficients in $\mathbf{Q}$ vanishes
in degree $>0$, while, with coefficients in $\mathbf{Q}(1)$,
it is equal to $k^\times\otimes\mathbf{Q}$ in degree $1$.
Therefore we should ask what is the replacement of locally constant sheaves. This will be dealt with later, when we will explain what are
constructible motives.
\end{rem}

\begin{defn}
We have an adjunction 
\[
\Sigma^{\infty}:\uDM_{h}^{\eff}(X,\Lambda)\rightleftarrows \uDM_{h}(X,\Lambda):\Omega^{\infty}
\]
and we define $M(Y)=\Sigma^{\infty}\Lambda(Y)$.
This is the \emph{motive} of $Y$ over $X$, with coefficents in $\Lambda$. 
\end{defn}
As we want eventually to do intersection theory, we need Chern classes within motives.
Here is how they appear.
Consider the morphisms of $h$-sheaves of groups
$\mathbf{Z}(\mathbf{A}^{1}-\{0\})\to\mathbf{G}_{m}$
on the category $Sch/X$
corresponding to the identity $\mathbf{A}^1-\{0\}=\mathbf{G}_m$, seen as a map of sheaves
of sets. From the pushout diagram 
\[
\begin{tikzcd}
\mathbf{A}^{1}-\{0\}\ar{r}\ar{d} & \mathbf{A}^{1}\ar{d}\\
\mathbf{A}^{1}\ar{r} & \mathbf{P}^{1}
\end{tikzcd}
\]

and from the identification $\ZZ\cong\ZZ(\mathbf{A}^1)$,
we get a (split) cofiber sequence 
\[
\mathbf{Z}\to\mathbf{Z}(\mathbf{A}^{1}-\{0\})\to\mathbf{Z}(1)[1]
\]
Since the map $\mathbf{Z}(\mathbf{A}^{1}-\{0\})\to\mathbf{G}_{m}$
takes $\ZZ$ to $0$, it induces a canonical map
$\ZZ(1)[1]\to\mathbf{G}_m$.
\begin{thm}[Voevodsky]
The map $\mathbf{Z}(1)[1]\to\mathbf{G}_{m}$
is an equivalence in the effective category $\uDM_{h}^{\eff}(X,\ZZ)$.\label{chern}
\end{thm}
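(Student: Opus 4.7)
The plan is to check the map is an equivalence by computing $\mathbf{A}^{1}$-localized $h$-stalks on both sides. By the $h$-descent theorem recalled above, it suffices to show the induced map $C_{*}(\ZZ(\mathbf{G}_{m}))/C_{*}(\ZZ)\to C_{*}(\mathbf{G}_{m})$ of Suslin complexes is a quasi-isomorphism after evaluating at each appropriate $h$-local ring $R$ (which, for the $h$-topology, one should take to be an absolutely integrally closed valuation ring).

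On the target side, since such $R$ is reduced, $(R[t_{0},\dots,t_{n}]/(\sum t_{i}-1))^{\times}=R^{\times}$ for every $n$, so $C_{*}(\mathbf{G}_{m})(R)$ is quasi-isomorphic to $R^{\times}$ concentrated in degree zero. On the source side, the $h$-sheafification present in the definition of $\ZZ(\mathbf{G}_{m})$ is crucial: it enriches the naive free $\mathbf{Z}$-module presheaf $S\mapsto\mathbf{Z}[\mathbf{G}_{m}(S)]$ with transfer data along finite flat covers. This, combined with the classical Suslin $\mathbf{A}^{1}$-homotopy which for $a,b\in R^{\times}$ realizes $[ab]-[a]-[b]+[1]$ as a boundary in the normalized Suslin complex (the $h$-topology analogue of the homotopy used by Nesterenko--Suslin in the corresponding computation in Voevodsky's $\DM^{\eff}$), forces $\pi_{0}$ of the source to be the group completion of the pointed commutative monoid $(R^{\times},\cdot,1)$, which is $R^{\times}$ itself. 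The map is therefore an isomorphism in degree zero; vanishing of the higher homotopy follows from a connectivity argument together with the standard motivic cohomology computation $H^{p}(R,\ZZ(1))=0$ for $p\neq 1,2$.

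The main obstacle is the interaction between the $h$-sheafification, the $\mathbf{A}^{1}$-homotopy, and the transfer structure: making these three ingredients cooperate to produce exactly the multiplicativity relation (and no spurious ones) is the heart of the argument, and is precisely why Voevodsky's passage to the $h$-topology, rather than, say, the Nisnevich topology, is the natural home for this theorem.
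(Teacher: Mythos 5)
The paper itself offers no proof of this statement: it is quoted as a theorem of Voevodsky (the complete arguments are in Voevodsky's work and in Cisinski--D\'eglise's \emph{\'Etale motives}), so there is no in-paper argument to compare yours with; I can only assess the sketch on its own terms, and it has genuine gaps. The most serious one is the very first reduction: you identify ``computing $\mathbf{A}^1$-localized $h$-stalks'' with ``evaluating the Suslin complexes at the points of the $h$-topos''. This is only legitimate if you already know that the ($h$-sheafified) Suslin complex $C_*(\mathcal{F})$ is $\mathbf{A}^1$-local as an object of $D(Sh_h(X,\ZZ))$, i.e.\ a strict homotopy-invariance theorem. For Nisnevich sheaves \emph{with transfers} over a perfect field this is a deep theorem of Voevodsky; for $h$-sheaves with integral coefficients over an arbitrary noetherian base it is not formal and is in effect a large part of the content of the statement you are proving. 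Checking a map of complexes of $h$-sheaves on stalks at absolutely integrally closed valuation rings (which is fine for these connective objects, and your computation of the target, $R[t_1,\dots,t_n]^{\times}=R^{\times}$ for $R$ reduced, is correct) only controls the map \emph{before} $\mathbf{A}^1$-localization unless both sides are already local.

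The second gap is in the source. You say that $h$-sheafification ``enriches'' $\ZZ(\mathbf{G}_m)$ with transfers along finite flat covers and that, combined with the Suslin homotopy realizing $[ab]-[a]-[b]+[1]$ as a boundary, this ``forces'' $\pi_0$ to be $R^{\times}$. That is precisely the hard point: one needs the Suslin--Voevodsky comparison of the free $h$/qfh-sheaf with a sheaf of finite relative cycles to know that the degree-two correspondence implementing multiplicativity really lives in $\ZZ_h(\mathbf{G}_m)(\Delta^1_R)$, and to know that no further relations appear. This comparison is a substantial theorem, and with \emph{integral} coefficients in positive characteristic it is notoriously delicate (it is usually stated after inverting the exponential characteristic; integrally, the $h$-topology also inverts universal homeomorphisms, e.g.\ Frobenius, which has to be confronted rather than ignored). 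Moreover the classical degree-zero computation is done over a field via $\mathrm{Pic}(\mathbf{P}^1,\{0,\infty\})$, whereas your stalks are valuation rings, so an extra argument is needed there as well. Finally, your treatment of the higher homotopy is circular: the ``standard motivic cohomology computation $H^p(R,\ZZ(1))=0$ for $p\neq 1,2$'' is equivalent to the statement $\ZZ(1)[1]\simeq\mathbf{G}_m$ being proved; if you instead mean the computation in Voevodsky's Nisnevich category with transfers, you would need the comparison of that category with $\uDM_h^{\eff}$, and once such a comparison is available the whole theorem follows from Voevodsky's classical result directly, making the stalkwise analysis superfluous.
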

As a result, we get canonical maps:
\begin{itemize}
\item $h$-hyper\-sheafi\-fi\-ca\-tion:
\[
Pic(X)=H_{Zar}^{1}(X,\mathbf{G}_{m})\to
H^0\Hom_{D(Sh_{h}(X,\Lambda))}(\mathbf{Z},\mathbf{G}_{m}[1])\, ;
\]
\item $\mathbf{A}^1$-lo\-cal\-iza\-tion:
\[
H^0\Hom_{D(Sh_{h}(X,\Lambda))}(\mathbf{Z},\mathbf{G}_{m}[1])
\to H^0\Hom_{\uDM_{h}^{\eff}}(\mathbf{Z},\mathbf{G}_{m}[1])\, ;
\]
\item $\mathbf{P}^1$-stabil\-iza\-tion:
\[
H^0\Hom_{\uDM_{h}^{\eff}(X,\ZZ)}(\mathbf{Z},\mathbf{G}_{m}[1])\to
H^0\Hom_{\uDM_{h}(X,\ZZ)}(\mathbf{Z},\mathbf{Z}(1)[2])\, .
\]
\end{itemize}
By composition this gives us the first motivic Chern classes of line bundles.
\[
c_1: Pic(X)\to H_{M}^{2}(X,\mathbf{Z}(1))
=H^0\Hom_{\uDM_{h}(X,\ZZ)}(\mathbf{Z},\mathbf{Z}(1)[2])
\]
\subsection{Functoriality}
\begin{paragr}
Recall that we have an assignment 
\[
X\mapsto\uDM_{h}(X,\Lambda).
\]
There is a unique symmetric monoidal structure on $\uDM_{h}(X,\Lambda)$
such that the functor $M:Sch_{/X}\to\uDM_{h}(X,\Lambda)$
is monoidal. It has the following properties  (we write $\Lambda=M(X)\cong\Sigma^\infty(\Lambda)$
and $\Lambda(1)=\Sigma^\infty(\Lambda(1))$):
\begin{itemize}
\item $A(1)\cong A\otimes\Lambda(1)$; all functors of interest always commute with the functor $A\mapsto A(1)$.
\item $M(Y\times\mathbf{P}^{1})\cong M(Y)[2]\oplus M(Y)$. 
\item $A(n)= A\otimes\Lambda(n)$ is well defined for all $n\in\ZZ$ (with $\Lambda(n)$ the dual of $\Lambda(-n)$
for $n<0$ and $\Lambda(0)=\Lambda$, $\Lambda(n+1)\cong\Lambda(n)(1)$ for $n\geq 0$).
\item There is an internal $\Hom$ functor $\uHom$.
\end{itemize}
For a morphism $f:X\to Y$ we have $f^{*}:\uDM_{h}(Y,\Lambda)\to\uDM_{h}(X,\Lambda)$
which preserves colimits and thus has right adjoint $f_{*}:\uDM_{h}(X,\Lambda)\to\uDM_{h}(Y,\Lambda)$.
No property of $f$ is required for that. We construct first the functor
\[f^*:\uDM_{h}^{\eff}(Y,\Lambda)\to\uDM^{\eff}_{h}(X,\Lambda)\]
as the unique colimit preserving functor which fits in the commutative diagram
\[\begin{tikzcd}
Sch/Y\times D(\Lambda)\ar{r}{f^*\times 1_{D(\Lambda)}}\ar{d}&Sch/X\times D(\Lambda)\ar{d}\\
\uDM_{h}^{\eff}(Y,\Lambda)\ar{r}{f^*}&\uDM^{\eff}_{h}(X,\Lambda)
\end{tikzcd}\]
(in which the vertical functors are the canonical ones
$(U,C)\mapsto M(U)\otimes_\Lambda C$),
and observe that it has a natural structure of symmetric monoidal functor.
There is thus a unique symmetric monoidal pull-back functor $f^*$ defined on
$\uDM_{h}$ so that the following squares commutes.
\[\begin{tikzcd}
\uDM_{h}^{\eff}(Y,\Lambda)\ar{r}{f^*}\ar{d}{\Sigma^\infty}&\uDM^{\eff}_{h}(X,\Lambda)\ar{d}{\Sigma^\infty}\\
\uDM_{h}(Y,\Lambda)\ar{r}{f^*}&\uDM_{h}(X,\Lambda)
\end{tikzcd}\]
If moreover $f$ is separated
and of finite type then the pull-back functor $f^{*}$ has a left adjoint
functor $f_{\sharp}:\uDM_{h}(X,\Lambda)\to\uDM_{h}(Y,\Lambda)$
which preserves colimits, and is essentially determined by the property that $f_{\sharp}M(U)=M(U)$
for any separated $X$-scheme of finite type $U$ via universal properties as above.
For example $f_{\sharp}(\Lambda)=M(X)$. We have a projection formula (proved by observing that
the formula holds in the category of schemes and then extending by colimits)
\[
f_{\sharp}(A\otimes f^{*}(B))\stackrel{\simeq}{\to}f_{\sharp}A\otimes B.
\]
\end{paragr}
\begin{xca}
Show that, for any Cartesian square of noetherian schemes
\[\begin{tikzcd}
X'\ar{r}{u}\ar{d}{f'}&X\ar{d}{f}\\
Y'\ar{r}{v}&Y
\end{tikzcd}\]
and for any $M$ in $\uDM_{h}(X,\Lambda)$, if $v$ is
separated of finite type, then the canonical map
\[v^*f_*(M)\to f'_*u^*(M)\]
is invertible.
\end{xca}
The base change formula above is too much: we want this to hold only
for $f$ proper of $v$ smooth, because, otherwise, we will not have any
good notion of support of a motive. This is why we have to restrict
ourselves to a subcatefgory of $\uDM_{h}(X,\Lambda)$, on which
the support will be well defined.
\begin{defn}
Let $\DM_{h}(X,\Lambda)$ be the smallest full subcategory of $\uDM_{h}(X,\Lambda)$
closed under small colimits, containing objects of the form $M(U)(n)[i]$
for $U\to X$ smooth and $i,n\in\ZZ$. 
\end{defn}
\begin{rem}
The $\infty$-category $\DM_{h}(X,\Lambda)$ is stable  and presentable, essentially by construction.
It is also stable under the operator $M\mapsto M(n)$ for all $n\in \ZZ$.
\end{rem}
\begin{thm}[Localization Property]\index{localization property}
Take $i:Z{\to X}$ to be a closed emdebbing
with open complement $j:U{\to X}$ and let $M\in \DM_{h}(X,\Lambda)$.
Then we have a canonical cofiber sequence (in which the maps
are the co-unit and unit of appropriate adjunctions):
\[
j_{\sharp}j^{*}M\to M\to i_{*}i^{*}M
\]
\end{thm}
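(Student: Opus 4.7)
The plan is to construct a canonical map $\phi : C \to i_{*} i^{*} M$ from the cofibre $C := \mathrm{cofib}(j_{\sharp} j^{*} M \to M)$, and to show that $\phi$ is an equivalence by checking it after $j^{*}$ and $i^{*}$ and invoking joint conservativity of the pair $(i^{*}, j^{*})$.

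First, I would verify that the composite $j_{\sharp} j^{*} M \to M \to i_{*} i^{*} M$ is null. By the $j_{\sharp} \dashv j^{*}$ adjunction it corresponds to a map $j^{*} M \to j^{*} i_{*} i^{*} M$, which is zero because $j^{*} i_{*} = 0$: this vanishing follows from the base change formula in the exercise above, applied to the Cartesian square with $U \times_{X} Z = \varnothing$. Hence one gets a canonical factorisation $\phi : C \to i_{*} i^{*} M$, and the task reduces to showing that $\phi$ is an equivalence.

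Second, I would check that $\phi$ becomes an equivalence after applying each of $j^{*}$ and $i^{*}$. For $j^{*}$: the counit $j^{*} j_{\sharp} \to \mathrm{id}$ is invertible on representables $\Sigma^{\infty} \Lambda(Y)$ for $Y/U$ separated of finite type (both sides return $\Sigma^{\infty} \Lambda(Y)$), hence on all of $\uDM_{h}(U, \Lambda)$ by colimits; so $j^{*} C$ vanishes and matches $j^{*} i_{*} i^{*} M = 0$. For $i^{*}$: one checks $i^{*} j_{\sharp} = 0$ on representables (as $Y \times_{X} Z = \varnothing$ whenever $Y \to U$), hence on all of $\uDM_{h}(U, \Lambda)$; combined with $i^{*} i_{*} \cong \mathrm{id}$ (fully faithfulness of $i_{*}$), this yields $i^{*} C \cong i^{*} M \cong i^{*} i_{*} i^{*} M$, so $i^{*} \phi$ is an equivalence.

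The main obstacle is joint conservativity of $(i^{*}, j^{*}) : \DM_{h}(X, \Lambda) \to \DM_{h}(Z, \Lambda) \times \DM_{h}(U, \Lambda)$. The family $\{i, j\}$ is not itself an $h$-cover, since $Z \sqcup U \to X$ is a bijection on points but not a topological quotient; one must therefore invoke a genuine $h$-cover via an auxiliary proper modification. Taking the blow-up $\pi : \widetilde{X} = \mathrm{Bl}_{Z}(X) \to X$ (itself an $h$-cover, being proper and surjective, by the example on proper surjections), $h$-descent for $\uDM_{h}$ makes $\pi^{*}$ conservative. If $f$ is a morphism in $\DM_{h}(X, \Lambda)$ such that $i^{*} f$ and $j^{*} f$ are equivalences, then $\pi^{*} f$ is an equivalence over $U$ (where $\pi$ is an isomorphism, identifying $\pi^{*} f$ with $j^{*} f$) and over the exceptional divisor $E = \pi^{-1}(Z)$ (where $\pi^{*} f|_{E}$ is the pullback of $i^{*} f$ along $E \to Z$). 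Concluding that $\pi^{*} f$ is an equivalence on all of $\widetilde{X}$ is another instance of the localisation property, applied to the closed/open pair $(E, \widetilde{X} \setminus E)$; iterating this bootstrap via successive blow-ups, each of which strictly decreases the appropriate codimension invariant, together with Noetherian induction reduces to the trivial zero-dimensional base case, where every closed subscheme is clopen. With joint conservativity established, Steps 1 and 2 conclude that $\phi$ is an equivalence, yielding the desired cofibre sequence.
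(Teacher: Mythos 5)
Your Steps 1 and 2 are fine as far as they go, but the whole weight of the theorem sits on the step you defer, and that step is where the argument breaks. Joint conservativity of $(i^*,j^*)$ on $\DM_{h}(X,\Lambda)$ (together with $i^*i_*\cong\mathrm{id}$, which in these notes is only stated \emph{after} the theorem) is essentially equivalent to the localization property itself, and your blow-up bootstrap does not prove it: to conclude that $\pi^*f$ is invertible on $\widetilde{X}=\mathrm{Bl}_Z(X)$ you need exactly the same conservativity statement for the pair $(E,\widetilde{X}\setminus E)$, and there is no decreasing invariant that makes this terminate. The exceptional divisor $E$ is an effective Cartier divisor, so the next blow-up $\mathrm{Bl}_E(\widetilde{X})\to\widetilde{X}$ is an isomorphism and you are facing the identical problem again; nor does any dimension-type invariant drop (blowing up a closed point of a surface replaces a $0$-dimensional center by $E\cong\mathbf{P}^1$). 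So the ``Noetherian induction to the zero-dimensional case'' has nothing to induct on, and the argument is circular. A further slip: $\mathrm{Bl}_Z(X)\to X$ is proper but not surjective in general (it is empty when $Z=X$ and misses any component contained in $Z$); the genuine $h$-cover is $Z\sqcup\mathrm{Bl}_Z(X)\to X$, and conservativity of pullback along it for $\DM_{h}$ is itself a descent statement of the same depth as what you are trying to prove.

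There is also a structural reason this route cannot succeed: your argument nowhere uses that $\DM_{h}(X,\Lambda)$ is generated under colimits by $M(Y)(n)$ with $Y/X$ \emph{smooth}, nor does it use $\mathbf{A}^1$-homotopy invariance; it would apply verbatim in $\uDM_{h}(X,\Lambda)$, where the exercise immediately following the theorem asserts that the sequence $j_\sharp j^*M\to M\to i_*i^*M$ is \emph{not} a cofiber sequence for general $M$. Hence no purely formal descent argument can deliver the needed conservativity in the form you use it. The paper's proof uses precisely these inputs: since $j_\sharp$, $j^*$, $i_*$, $i^*$ commute with colimits, one reduces to $M=M(Y)$ with $Y/X$ smooth; then, by the Morel--Voevodsky argument, $Y$ is Zariski-locally \'etale over $\mathbf{A}^n\times X$, Nisnevich excision reduces to $Y=\mathbf{A}^n\times X$, and explicit $\mathbf{A}^1$-homotopies finish the proof. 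Any correct argument must invoke comparable geometric input at this point.
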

Idea of the proof: the functors $j_\sharp$, $j^*$, $i_*$ and $i^*$
commute with colimits. Therefore, it is sufficient to prove the case where $M=M(U)$
with $U/X$ smooth. We conclude by an argument due to Morel and Voevodsky,
using Nisnevich excision as well as the fact, locally for the Zariski
topology, $U$ is \'etale on $\mathbf{A}^n\times X$. Then, using Nisnevich
excision, we reduce to the vase where  $U=\mathbf{A}^n\times X$, in which
case we can provide explicit $\mathbf{A}^1$-homotopies.

\begin{xca}
Show that $j_{\sharp}j^{*}M\to M\to i_{*}i^{*}M$
is not a cofiber sequence in $\uDM_{h}(X,\Lambda)$ for an arbitrary object $M$.
\end{xca}
The functor $f^{*}$ restricts to a functor on $\DM_h$,
and also for $f_{\sharp}$ if $f$ is smooth. Moreover, $\DM_{h}$ is closed
under tensor product. If $i:Z\to X$ is a closed immersion, than by the cofiber
sequence above we see that the functor $i_{*}$ sends $\DM_h(Z,\Lambda)$ to
$\DM_h(X,\Lambda)$. 

\begin{rem}
By presentability, the inclusion
$\DM_{h}(X,\Lambda)\stackrel{i}{\to}\uDM_{h}(X,\Lambda)$
has right adjoint $\rho$. 

For $f:X\to Y$ we define 
\[
f_{*}:DM_{h}(X,\Lambda)\to DM_{h}(Y,\Lambda)
\]
 by 
\[
f_{*}M=\rho f_{*}i(M)
\]
We can use this to describe the internal Hom as well:
\[\uHom(A,B)=\sigma \uHom(i(A),i(B))\, .\]
\end{rem}
\begin{prop}
For any embedding $i:Z\to X$ the functors $i_{*},i_{\sharp}$ are
both fully faithful. 
\end{prop}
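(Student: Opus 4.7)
The plan is to reduce each fully-faithfulness statement to the equivalence of an appropriate unit or counit of adjunction, and to deduce these from the Localization Theorem combined with explicit computations on smooth generators. Writing $j:U\to X$ for the open complement of $i:Z\to X$, the proposition splits into showing fully faithfulness of $j_\sharp$ (and $j_*$) for open $j$, and of $i_*$ for closed $i$.

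For $j_\sharp$, fully faithfulness is the equivalence of the unit $N\to j^*j_\sharp N$. Since both functors preserve colimits, this can be verified on the smooth generators $M(V)(n)[k]$ with $V\to U$ smooth separated of finite type, where $j^*j_\sharp M(V)=M(V\times_X U)=M(V)$ because $V\to X$ already factors through the open $U$. The fully faithfulness of $j_*$ drops out formally by Yoneda: using both adjunctions,
\[
\Hom(M, j^*j_*N) \simeq \Hom(j_\sharp M, j_*N) \simeq \Hom(j^*j_\sharp M, N) \simeq \Hom(M, N),
\]
so that $j^*j_*N\simeq N$.

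For $i_*$, fully faithfulness is the equivalence of the counit $\epsilon_N:i^*i_*N\to N$. The key preliminary vanishing $i^*j_\sharp=0$ is checked on generators: $i^*j_\sharp M(V)=M(V\times_X Z)=M(\varnothing)=0$ since $V\to X$ factors through $U$ and $U\cap Z=\varnothing$; passing to adjoints yields $j^*i_*=0$. Applying $i^*$ to the Localization Theorem triangle for an arbitrary $M\in\DM_h(X,\Lambda)$, the leftmost term $i^*j_\sharp j^*M$ vanishes, making $i^*(\eta_M):i^*M\to i^*i_*i^*M$ an equivalence; by the triangle identity $\epsilon_{i^*M}\circ i^*(\eta_M)=\mathrm{id}_{i^*M}$, the counit $\epsilon_{i^*M}$ is an equivalence for every $M\in\DM_h(X,\Lambda)$. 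Hence $\epsilon_N$ is an equivalence whenever $N$ lies in the essential image of $i^*$.

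The main obstacle is extending this to arbitrary $N\in\DM_h(Z,\Lambda)$. My plan is to show that $\DM_h(Z,\Lambda)$ is generated, under colimits and Tate twists, by objects of the form $i^*M(W)=M(W\times_X Z)$ with $W\to X$ smooth. This reduces to the claim that every smooth $V\to Z$ is, Zariski-locally on $V$, of the form $W\times_X Z$ for some smooth $W\to X$: Zariski-locally $V\to Z$ factors as a standard étale extension of $\mathbf{A}^n_Z=\mathbf{A}^n_X\times_X Z$, say $V=\spec{(A[T]/P(T))[P'(T)^{-1}]}$, and lifting the polynomial $P$ to a $\tilde P$ over the coordinate ring $\tilde A$ of $\mathbf{A}^n_X$ produces a standard étale $W=\spec{(\tilde A[T]/\tilde P)[\tilde P'^{-1}]}\to\mathbf{A}^n_X\to X$ whose base change along $\mathbf{A}^n_Z\hookrightarrow\mathbf{A}^n_X$ recovers $V$. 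Combined with Zariski (hence $h$-)descent for $\DM_h$ and the colimit-preservation of $i^*$ and of $i_*$ (the latter because $i$ is proper), this propagates $\epsilon_N$ being an equivalence from the generating family to all of $\DM_h(Z,\Lambda)$.
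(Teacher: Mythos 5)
Your treatment of the open immersion case is correct and is the expected argument: check the unit of $(j_\sharp,j^*)$ on the generators $M(V)(n)[k]$, then get full faithfulness of $j_*$ formally from the adjoint triple $j_\sharp\dashv j^*\dashv j_*$. The closed immersion case, however, has a genuine gap. Your propagation step needs the class of $N$ with invertible counit $i^*i_*N\to N$ to be closed under colimits, hence needs $i_*$ to preserve colimits, and you justify this by ``because $i$ is proper''. That fact is not available at this point: in these notes the existence of a right adjoint $i^!$ (equivalently, colimit preservation of $i_*$) is \emph{deduced from} this very proposition, and the identification $f_!\simeq f_*$ for proper $f$ together with its consequences comes later still; even the paper only obtains colimit preservation of general push-forwards by citing an external result of \cite{CD4}. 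So as written the argument is circular (or at best rests on an unproved nontrivial input). The gap is not cosmetic: your Zariski-local lifting of smooth $Z$-schemes to smooth $X$-schemes is correct (it is essentially the key lemma in the proof of the localization theorem, and opens of $W\times_X Z$ are again pullbacks of opens of $W$, so all terms of the \v Cech nerve lie in the essential image of $i^*$), but the \v Cech diagram itself does \emph{not} lift to a diagram of smooth $X$-schemes (the transition maps between intersections have no given lifts), so you cannot bypass the colimit-closure of your class by writing $M(V)$ as $i^*$ of a single object. Separately, you never address $i_\sharp$ for a \emph{closed} immersion, which is half of the statement in the main case of interest.

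There is a much shorter uniform argument, which is presumably why the notes state the proposition without proof, and it is exactly the one you already used for $j_*$: since an immersion $i:Z\to X$ is a monomorphism, $W\times_X Z\cong W$ for every separated finite type $Z$-scheme $W$, so in $\uDM_h$ the unit $N\to i^*i_\sharp N$ is an equivalence on the generators $\Lambda(W)$, hence everywhere (both functors preserve colimits); thus $i_\sharp$ is fully faithful, and the Yoneda/adjoint-triple argument $\Hom(M,i^*i_*N)\cong\Hom(i^*i_\sharp M,N)\cong\Hom(M,N)$ gives full faithfulness of $i_*$ with no appeal to localization, to lifting of smooth schemes, or to colimit preservation of $i_*$. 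Full faithfulness of $i_*$ on $\DM_h$ then follows because, for $i$ closed, $i_*$ preserves the subcategory $\DM_h\subset\uDM_h$ (which the notes deduce from the localization sequence just before the proposition), and $\DM_h$ is a full subcategory. Your route via the localization triangle and the identity $\epsilon_{i^*M}\circ i^*(\eta_M)=\mathrm{id}$ is fine as far as it goes (it correctly shows the counit is invertible on the essential image of $i^*$), but to finish it you must either prove colimit preservation (or conservativity) of $i_*$ independently, or switch to the argument above.
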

Using this and some abstract nonsense we get that $i_{*}$ has a right
adjoint $i^{!}$ and there are canonical fiber sequences 
\[
i_{*}i^{!}M\to M\to j_{*}j^{*}M
\]
We also have a smooth base change formula\index{base change formula!smooth}
and
a proper base change formula\index{base change formula!proper}:
\begin{thm}[Ayoub, Cisinski-D\'eglise]
For any Cartesian square of noetherian schemes
\[\begin{tikzcd}
X'\ar{r}{u}\ar{d}{f'}&X\ar{d}{f}\\
Y'\ar{r}{v}&Y
\end{tikzcd}\]
and for any $M$ in $\DM_{h}(X,\Lambda)$, if either $v$ is
separated smooth of finite type, or if $f$ is proper, then the canonical map
\[v^*f_*(M)\to f'_*u^*(M)\]
is invertible in $\DM_h(X,\Lambda)$.
\end{thm}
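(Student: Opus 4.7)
The plan is to treat the two hypotheses separately: the smooth-$v$ case by a direct adjoint-functor argument, and the proper-$f$ case by d\'evissage to closed immersions and projective bundles.

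\emph{Smooth base change.} When $v$ is separated smooth of finite type, so is $u:X'\to X$, and both admit left adjoints $v_{\sharp}$ and $u_{\sharp}$. By the mate construction, the exchange map $v^{*}f_{*}\to f'_{*}u^{*}$ is invertible if and only if its mate $u_{\sharp}f'^{*}\to f^{*}v_{\sharp}$ is invertible. The latter compares colimit-preserving functors $\DM_{h}(Y',\Lambda)\to\DM_{h}(X,\Lambda)$, so by the definition of $\DM_{h}$ it suffices to verify the equivalence on the generators $M(W)$ for $W\to Y'$ smooth separated of finite type. For such $W$, both composites yield $M(W\times_{Y}X/X)$, using $v_{\sharp}M(W)=M(W/Y)$ (where $W$ is regarded as smooth over $Y$ via $v$), $f^{*}M(W/Y)=M(W\times_{Y}X/X)$, $f'^{*}M(W/Y')=M(W\times_{Y'}X'/X')$, $u_{\sharp}M(W\times_{Y'}X'/X')=M(W\times_{Y'}X'/X)$, and the Cartesian identity $W\times_{Y'}X'\simeq W\times_{Y}X$.

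\emph{Proper base change: the d\'evissage.} When $f$ is proper, I would reduce in stages. Chow's lemma produces a projective surjection $p:\tilde{X}\to X$ such that $f\circ p$ is projective. Because $p$ is an $h$-cover and $\DM_{h}$ satisfies $h$-descent by construction, $M$ is the totalization of its image along the \v{C}ech nerve of $p$; granting that $f_{*}$ commutes with this totalization, proper base change for $f$ follows from the corresponding statement for the projective morphisms obtained at each simplicial level. A projective morphism factors as $f=\pi\circ i$ with $i:X\hookrightarrow\mathbf{P}^{n}_{Y}$ a closed immersion and $\pi:\mathbf{P}^{n}_{Y}\to Y$ the structural projection; since the exchange map is compatible with composition of morphisms, it is enough to treat these two elementary cases.

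\emph{The two elementary cases.} The projection $\pi$ is smooth and proper. Relative purity, a consequence of the projective bundle formula $M(\mathbf{P}^{n}_{Y})\simeq\bigoplus_{k=0}^{n}M(Y)(k)[2k]$, provides a natural equivalence $\pi_{*}(-)\simeq\pi_{\sharp}(-)(-n)[-2n]$; base change for $\pi_{*}$ thus reduces to the smooth case already handled, applied to $\pi_{\sharp}$. For the closed immersion $i$, let $j:U\hookrightarrow\mathbf{P}^{n}_{Y}$ be the complementary open with its base changes $j':U'\hookrightarrow\mathbf{P}^{n}_{Y'}$ and $u|_{U'}:U'\to U$ along $v$. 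Functoriality of $(-)^{*}$ together with the standard localization vanishing $j^{*}i_{*}=0$ gives $j'^{*}v^{*}i_{*}M=(u|_{U'})^{*}j^{*}i_{*}M=0$, so the localization cofiber sequence applied to $N=v^{*}i_{*}M$ identifies $v^{*}i_{*}M\simeq i'_{*}i'^{*}v^{*}i_{*}M$; combining $i'^{*}v^{*}=u^{*}i^{*}$ with $i^{*}i_{*}M\simeq M$ (full faithfulness of $i_{*}$) yields $v^{*}i_{*}M\simeq i'_{*}u^{*}M$, as desired.

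\emph{Main obstacle.} The technical heart of the argument is the first reduction: justifying that $f_{*}$ commutes with the simplicial totalization coming from a Chow cover, so that proper base change may be bootstrapped from its projective case. This is non-trivial for arbitrary $M\in\DM_{h}(X,\Lambda)$ and is where the $h$-topological hypothesis and the compact-generation properties of $\DM_{h}$ genuinely enter; a noetherian induction on $Y$ combined with generic flatification is a natural route. Granting this step, every remaining ingredient---the mate construction, the projective bundle formula, the localization theorem, and the full faithfulness of $i_{*}$---has already been made available in the text.
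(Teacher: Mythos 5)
Your smooth-$v$ case and your treatment of the closed immersion are sound: the mate argument (a transformation of right adjoints $v^*f_*\to f'_*u^*$ is invertible iff its mate $u_\sharp f'^*\to f^*v_\sharp$ is), the check on the generators $M(W)(n)[i]$, and the localization argument via $j'^*i_*=0$ and $i'^*i'_*\cong\mathrm{id}$ are exactly the standard proofs. The genuine gap is in the reduction from proper to projective morphisms, and it is not quite where you placed it. The step you flag --- that $f_*$ commutes with the totalization of the \v{C}ech nerve of a Chow cover --- is automatic: $f_*$ is a right adjoint and commutes with all limits (and $\DM_h$ does satisfy $h$-descent). The real obstruction is on the other side: once you write $f_*M\simeq\lim_{\Delta}(fp_\bullet)_*p_\bullet^*M$, you must apply $v^*$, and $v^*$ is a \emph{left} adjoint which has no reason to commute with an infinite cosimplicial totalization. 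Since $M$ is an arbitrary object of $\DM_h(X,\Lambda)$, you cannot invoke constructibility, boundedness of the descent spectral sequence, or finite cohomological dimension to force this interchange; so even granting the step you isolate, the argument does not close. The way this reduction is actually carried out (this is in substance what \cite[Theorem 2.4.12]{CD3} does, which is the reference the text gives for the non-quasi-projective case, Ayoub's axiomatics covering only quasi-projective morphisms) is to use Chow's lemma to produce $p:\tilde X\to X$ projective, surjective and an isomorphism over a dense open, and then to argue with the associated abstract blow-up (cdh) square --- a \emph{finite} Cartesian square of motives, hence preserved by the exact functor $v^*$ --- combined with the localization cofiber sequence and noetherian induction on the dimension of $X$. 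Replacing your \v{C}ech totalization by this finite-diagram d\'evissage is the missing idea.

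A secondary inaccuracy: the equivalence $\pi_*\simeq\pi_\sharp(-n)[-2n]$ for the projection $\pi:\mathbf{P}^n_Y\to Y$ is not a formal consequence of the projective bundle formula. It is the purity theorem (Morel--Voevodsky, Ayoub), whose proof requires constructing the duality data by explicit $\mathbf{A}^1$-homotopies (deformation to the normal cone), and to transfer base change from $\pi_\sharp$ to $\pi_*$ you also need the purity isomorphism to be compatible with pullback along $v$. Invoking purity as a black box is legitimate here --- the text itself notes that the proof of proper base change relies heavily on relative purity --- but deriving it from the projective bundle formula overstates what that formula gives, and without the base-change compatibility of the purity isomorphism the exchange map for $\pi_*$ is not yet identified with the one you have controlled for $\pi_\sharp$.
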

The proof follows from Ayoub's axiomatic approach \cite{ay1},
under the additional assumption that all the maps are quasi-projective.
The general case may be found in \cite[Theorem 2.4.12]{CD3}.
\begin{defn}[Deligne]
Let $f:X\to Y$ be separated of finite type, or equivalently, by Nagata's
theorem, assume that there is a relative compactification which is
a factorization of $f$ as

\[
X\stackrel{j}{\to}\bar{X}\stackrel{p}{\to}Y\, ,
\]
 where $j$ is an open embedding and $p$ is proper. Then we define 
\[
f_{!}=p_{*}j_{\sharp}
\]
\end{defn}
Here are the main properties we will use (see \cite{CD3}):
\begin{itemize}
\item The functor $f_{!}$ admits a right adjoint $f^{!}$ (because it commutes with colimits). 
\item There is a comparison map $f_{!}\to f_{*}$ constructed as follows. There is
a map $j_\sharp\to j_*$ which corresponds by transposition to the inverse of the
isomorphism from $j^*j_*$ to the identity due to the fully faithfulness of $j_*$.
Therefore we have a map $f_!=p_* j_\sharp\to p_*j_*\cong f_*$.
\item Using the proper base change formula, we can prove that push-forwards with compact support\index{push-forward!with compact support}
are well defined: in particular, the functor $f_!$ does not depend on the choice of the
compactification of $f$ up to isomorphism. Furthermore, if $f$ and $g$
are composable, there is a coherent isomorphism $f_!g_!\cong (fg)_!$.
\end{itemize}
The proof of the proper base change formula relies
heavily on the following property.
\begin{thm}[Relative Purity]\index{purity!relative}\index{relative purity}
If $f:X\to Y$ is smooth and separated of finite type, then 
\[
f^{!}(M)\cong f^{*}(M)(d)[2d]
\]

where $d=dim(X/Y)$.
\end{thm}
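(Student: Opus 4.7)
The plan is to reduce the statement, via adjunctions and Zariski-local considerations, to two elementary base cases. Taking left adjoints of the claimed equivalence $f^!(M)\cong f^*(M)(d)[2d]$, the statement is equivalent to the assertion that a purity natural transformation
\[\mathfrak{p}_f : f_\sharp \longrightarrow f_!(-)(d)[2d]\]
is an equivalence for every smooth separated morphism $f$ of relative dimension $d$ (using the adjunctions $f_\sharp\dashv f^*$ and $f_!\dashv f^!$).

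First I would construct $\mathfrak{p}_f$ in a coherent way. For $f$ smooth and separated, the diagonal $\Delta_f : X \to X\times_Y X$ is a regular closed immersion whose normal bundle is the tangent sheaf $T_{X/Y}$ of rank $d$. Deformation to the normal cone applied to $\Delta_f$ produces a canonical Thom-space identification that yields the natural transformation $\mathfrak{p}_f$. A parallel application of the same deformation argument shows that $\mathfrak{p}_f$ is compatible with composition of smooth morphisms and with smooth base change, using the proper base change formula already established.

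Next I would reduce to base cases. Whether $\mathfrak{p}_f$ is invertible is a question that is local on $X$ for the Zariski topology, since all functors involved respect Zariski restriction. Every smooth morphism locally factors as an \'etale map followed by a projection $\mathbf{A}^n_Y\to Y$, and the latter factors through iterated projections $\mathbf{A}^1\to\mathrm{pt}$, so by compatibility of $\mathfrak{p}_f$ with composition it suffices to treat: (i) $f$ \'etale and (ii) $f:\mathbf{A}^1_Y\to Y$ the structural projection. Case (i) is automatic: $d=0$, and one has $f_\sharp\cong f_!$ essentially because an \'etale separated morphism is Nisnevich-locally its own compactification. In case (ii), I would compactify $\mathbf{A}^1_Y\hookrightarrow\mathbf{P}^1_Y\to Y$, apply the localization cofiber sequence to the section at infinity, and use the splitting $M(\mathbf{P}^1)\cong\Lambda\oplus\Lambda(1)[2]$ recorded above to compute both sides explicitly and verify that the comparison map is an equivalence.

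The main obstacle is the coherence of the construction of $\mathfrak{p}_f$: the hands-on verification in the $\mathbf{A}^1$ case only implies the general statement if one knows that the abstract $\mathfrak{p}_f$ produced by the deformation-to-the-normal-cone construction agrees, in that special case, with the explicit isomorphism read off from the $\mathbf{P}^1$-decomposition, and transports correctly through the reduction steps. Setting up this naturality and transitivity of $\mathfrak{p}_f$ with respect to composition and base change is the heart of Ayoub's axiomatic purity argument in \cite{ay1}, and it is the step where almost all the real work of the proof lives.
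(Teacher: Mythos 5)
Your outline is sound, but it is worth saying precisely how it sits relative to the text: the paper does not prove relative purity at all --- it states the theorem and points to the literature, namely R\"ondigs \cite{roendigs}, Ayoub's axiomatic proof \cite{ay1} (whose only inputs are localization and $\mathbf{A}^1$-homotopy invariance), the proof of \cite[Theorem 4.2.6]{CD4}, and Hoyois \cite{hoyois}. What you sketch is exactly Ayoub's route: define a purity transformation $\mathfrak{p}_f\colon f_\sharp\to f_!(-)(d)[2d]$ via the diagonal $\Delta_f$ and deformation to the normal cone, establish its coherence under composition and (smooth/proper) base change, and reduce Zariski-locally to the \'etale case and to $\mathbf{A}^1_Y\to Y$ handled by the $\mathbf{P}^1$-compactification and the splitting $M(\mathbf{P}^1)\cong\Lambda\oplus\Lambda(1)[2]$. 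That is a legitimate and fully general proof, and you correctly identify that essentially all of the work is the coherence of $\mathfrak{p}_f$. The paper, however, explicitly advocates a shortcut available in the present oriented setting: since Chern classes exist (Theorem \ref{chern}), every Thom space is trivialized, $Th(E)\cong\Lambda(r)[2r]$, so one only needs the canonical statement $f^!(M)\cong f^*(M\otimes Th(T_f))$ and the Thom isomorphism, which ``dramatically simplifies'' the argument (this is the proof of \cite[Theorem 4.2.6]{CD4}, adapted to $h$-sheaves). Your approach buys generality (it works in unoriented contexts such as the stable homotopy category of schemes, where only the twisted form by $Th(T_f)$ holds); the Chern-class proof buys brevity here because $\DM_h$ is oriented. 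Two small cautions on your reductions: the locality on $X$ is not literally because ``all functors respect Zariski restriction'' --- $f_\sharp$ and $f_!$ do not commute naively with restriction to opens of $X$; one checks invertibility on generators $u_\sharp\Lambda$ for $u$ running over a cover and uses the composition compatibility of $\mathfrak{p}$, so this step is part of the same coherence package; and in the \'etale case the clean argument is that $\Delta_f$ is an open and closed immersion in $X\times_Y X$, combined with the localization theorem, rather than the informal ``Nisnevich-locally its own compactification''.
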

The first appearance of this kind of result in a motivic context
(i.e. in stable homotopy category of schemes) was in a
preprint of Oliver R\"ondigs~\cite{roendigs}.
As a matter of facts, the proof of relative purity
can be made with a great level of generality, as
in Ayoub's thesis \cite{ay1}, where we see that the only
inputs are the localization theorem and $\mathbf{A}^1$-homotopy invariance.
However, in our situation
(where Chern classes are available),
the proof can be dramatically simplified
(see the proof \cite[Theorem 4.2.6]{CD4},
which can easily be adapted to the context of $h$-sheaves).
A very neat and robust proof (in equivariant stable homotopy
category of schemes, but which may be seen in any context with the
six operations) may be found in Hoyois' paper \cite{hoyois}.
\begin{rem}
For a vector bundle $E\to X$ of rank $r$, we can define its \emph{Thom space} $Th(E)$
by the cofiber sequence
\[\Lambda(E-0)\to\Lambda(E)\to Th(E)\]
(where $E-0$ is the complement of the zero section).
Using motivic Chern classes, we can construct the Thom isomorphism
\[
Th(E)\cong\Lambda(r)[2r]\, .
\]

What is really canonical and conceptually right is 
\[
f^{!}(M)\cong f^{*}(M\otimes Th(T_{f})).
\]
We refer to Ayoub's work for more details.
From this we can deduce a formula relating $f_{!}$ and $f_{\sharp}$
when $f$ is smooth. By transposition, relative putity takes the following form.
\end{rem}
\begin{cor}
If $f:X\to Y$ is smooth and separated of finite type then 
\[
f_{\sharp}(M)\cong f_{!}(M)(d)[2d].
\]
\end{cor}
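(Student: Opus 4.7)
The plan is to treat relative purity as an equivalence of right adjoints in the $\infty$-categorical sense and to obtain the corollary by passage to left adjoints.

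First I would rewrite relative purity as a factorization $f^{!}\cong f^{*}\circ T_{d}$ of functors $\DM_{h}(Y,\Lambda)\to\DM_{h}(X,\Lambda)$, where $T_{d}:\DM_{h}(Y,\Lambda)\to\DM_{h}(Y,\Lambda)$ denotes the autoequivalence $M\mapsto M(d)[2d]$. This really is an autoequivalence: $\Lambda(1)$ is $\otimes$-invertible by the very construction of $\uDM_{h}$ and shifts are invertible in any stable $\infty$-category, with inverse $T_{-d}:M\mapsto M(-d)[-2d]$. The identification uses only that $f^{*}$ is symmetric monoidal and hence commutes with Tate twists and suspensions.

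Next I would compute the left adjoints of both sides. By the general rule $(A\circ B)^{L}=B^{L}\circ A^{L}$, the left adjoint of $f^{*}\circ T_{d}$ is $T_{-d}\circ f_{\sharp}$: the factor $f^{*}$ has left adjoint $f_{\sharp}$ since $f$ is smooth and separated of finite type, while $T_{d}$ has left adjoint $T_{-d}$ because it is an equivalence with that inverse. The left adjoint of $f^{!}$ is $f_{!}$ by Deligne's definition. Uniqueness of left adjoints therefore yields a canonical equivalence
\[
f_{!}(M)\cong f_{\sharp}(M)(-d)[-2d],
\]
and applying the autoequivalence $T_{d}$ to both sides produces the desired identification $f_{\sharp}(M)\cong f_{!}(M)(d)[2d]$.

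The only real subtlety, and the point on which the brevity of the ``by transposition'' argument hinges, is to be sure that the relative purity equivalence is an equivalence of functors at the $\infty$-categorical level and not merely a pointwise isomorphism in the triangulated homotopy category. Only then does the passage to left adjoints produce a canonical coherent equivalence of functors, and in particular a natural equivalence in the object variable $M$. This is built into the constructions of \cite{ay1,CD3,CD4}, where the six operations are organized $\infty$-categorically (or in compatible Quillen model structures), and is precisely what makes the transposition argument rigorous.
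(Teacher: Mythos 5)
Your proposal is correct and is exactly the argument the paper intends by ``by transposition'': relative purity $f^!\cong f^*(-)(d)[2d]$ is an equivalence of functors, so passing to left adjoints ($f_!$ for $f^!$, and $f_\sharp$ composed with the inverse twist for $f^*(-)(d)[2d]$, using that $f^*$ is monoidal so twists can be moved across it) gives $f_!\cong f_\sharp(-)(-d)[-2d]$, hence the stated formula. Your remark about needing the purity isomorphism as a coherent natural equivalence (as provided by the cited constructions of the six operations) is the right caveat and matches the paper's setup.
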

Finally, we also need the Projection Formula (see \cite[Theorem 2.2.14]{CD3}): 
\begin{prop}\index{projection formula}
If $f:X\to Y$ is separated of finite type then,
for any $A$ in $\DM_h(X,\Lambda)$ and any $B$ in
$\DM_h(Y,\Lambda)$, there is a canonical
isomorphism:
\[
f_{!}(A\otimes f^{*}B)\cong f_{!}(A)\otimes B\, .
\]
\end{prop}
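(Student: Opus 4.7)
The plan is to use Deligne's decomposition $f = p \circ j$ with $j$ an open immersion and $p$ proper, so that $f_! = p_* j_\sharp$, and then to reduce the projection formula to two special cases handled by previously established tools.

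The first case is $j$ an open immersion: here $j_! = j_\sharp$ (relative purity is trivial in the étale case), and the formula $j_\sharp(A \otimes j^* B) \cong j_\sharp A \otimes B$ is the already-noted projection formula for $f_\sharp$ with $f$ smooth.

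The second, crucial case is $p$ proper: I must prove $p_*(A \otimes p^* B) \cong p_* A \otimes B$, where the natural map is built from the counit $p^* p_* \to \mathrm{id}$ by adjunction. First, both sides are seen to preserve small colimits in $B$; for the target, this relies on $p_*$ preserving colimits when $p$ is proper, which one deduces from $p_* \cong p_!$ together with the fact that $p_!$, being a left adjoint to $p^!$, commutes with all colimits. Since $\DM_h(Y,\Lambda)$ is generated under small colimits by objects $g_\sharp \Lambda(n)[i]$ with $g:V \to Y$ smooth separated of finite type, it suffices to verify the equivalence for $B = g_\sharp \Lambda(n)[i]$. For such $B$, form the Cartesian square
\[\begin{tikzcd}
V \times_Y X \ar{r}{p'}\ar{d}{g'} & X \ar{d}{p}\\
V \ar{r}{g} & Y
\end{tikzcd}\]
and combine smooth base change $p^* g_\sharp \cong g'_\sharp p'^*$, proper base change $g^* p_* \cong p'_* g'^*$, and the projection formulas for the smooth maps $g$ and $g'$ to reduce the statement to a tautology.

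Combining the two cases yields
\[f_! A \otimes B = p_*(j_\sharp A) \otimes B \cong p_*(j_\sharp A \otimes p^* B) \cong p_* j_\sharp(A \otimes j^* p^* B) = f_!(A \otimes f^* B),\]
where the first isomorphism is the proper projection formula and the second is the smooth projection formula for $j_\sharp$ (applied in reverse). The main obstacle is establishing that $p_*$ preserves colimits for proper $p$: this is not formal, since $p_*$ is a priori only a right adjoint, and it rests on the identification $p_! \cong p_*$ for proper $p$, which in turn uses the proper base change theorem stated above. Once this is available, the remaining work is a coherent bookkeeping of base change and projection isomorphisms on generators.
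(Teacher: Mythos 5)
Your argument is correct in outline, but note that the paper gives no proof of this proposition at all: it simply refers to \cite[Theorem 2.2.14]{CD3}, so there is no internal argument to compare with. Your route --- factor $f=p\circ j$, use the already-stated projection formula for $j_\sharp$ (with $j_!=j_\sharp$ for an open immersion), and handle the proper case by reducing, via colimit-preservation of $p_*\cong p_!$, to the generators $B=g_\sharp\Lambda(n)[i]$ and then invoking proper base change together with the smooth projection formulas --- is a legitimate alternative to the way this is actually proved in the cited literature, where (following Ayoub's axiomatics) the proper case is treated by factoring through projective morphisms, i.e.\ a closed immersion followed by a projective bundle projection, and handling those two cases directly via localization and purity. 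Your version is shorter, but it leans on more of the six-functor package: the colimit-preservation of $p_*$ for $p$ proper is precisely the nontrivial fact that $f_!$ admits a right adjoint $f^!$, which the paper grants only by citation; within the paper's stated toolkit this is admissible, though your phrasing (``$p_!$, being a left adjoint to $p^!$, commutes with colimits'') inverts the logical order in which $f^!$ is obtained, so be aware you are importing that fact rather than deriving it. Two bookkeeping points in the generator computation: after applying the four isomorphisms you list, the two sides become $p_*g'_\sharp g'^*A$ and $g_\sharp p'_*g'^*A$, so you still need the exchange $p_*g'_\sharp\cong g_\sharp p'_*$ (equivalently $g'^*p^!\cong p'^!g^*$), which does follow from relative purity and $(pg')_!\cong(gp')_!$ but is not literally among your tools; and one must check that the composite of these exchange isomorphisms agrees with the canonical map built from the counit, so that the colimit argument is applied to that natural map and not merely to an abstract isomorphism of objects.
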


\begin{xca}
\end{xca}
\begin{itemize}
\item Let $f:X\to Y$, then $f_{*}\uHom(f^{*}M,N)\cong \uHom(M,f_{*}N)$. 
\item For $f$ separated of finite type we have $\uHom(f_{!}M,N)\cong f_{*}\uHom(M,f^{!}N)$. 
\item For $f$ as above, $f^{!}\uHom(M,N)\cong \uHom(f^{*}M,f^{!}N)$
\item For $f$ \emph{smooth}, $f^{*}\uHom(M,N)\cong \uHom(f^{*}M,f^{*}N)$. 
\end{itemize}
A reformulation of the proper base change formula is the following.
\begin{thm}\index{base change formula!proper}
For any pull-back square of noetherian schemes
\[
\begin{tikzcd}
X'\ar{r}{u}\ar{d}{f'} & X\ar{d}{f}\\
Y'\ar{r}{v} & Y
\end{tikzcd}
\]
with $f$ is separated of finite type, we have $v^{*}f_{!}\cong f'_{!}u^{*}$
and $f^{!}v_{*}\cong u_{*}(f')^{!}$. 
\end{thm}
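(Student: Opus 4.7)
The plan is to deduce both isomorphisms directly from the definition $f_{!}=p_{*}j_{\sharp}$ attached to a compactification $X\stackrel{j}{\to}\bar{X}\stackrel{p}{\to}Y$, by combining two base-change results already at our disposal: the proper base change theorem applied to $p_{*}$, and the $\sharp$-base change for the open immersion $j$. The second isomorphism will then follow formally by passing to right adjoints.

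Concretely, pulling back the chosen compactification along $v$ produces a diagram in which both squares are Cartesian:
\[
\begin{tikzcd}
X'\ar{r}{u}\ar{d}{j'} & X\ar{d}{j}\\
\bar{X}'\ar{r}{\bar{u}}\ar{d}{p'} & \bar{X}\ar{d}{p}\\
Y'\ar{r}{v} & Y
\end{tikzcd}
\]
Here $j'$ is an open immersion (as the pull-back of $j$), $p'$ is proper (as the pull-back of $p$), and $f'=p'\circ j'$ defines a compactification of $f'$; hence $f'_{!}=p'_{*}j'_{\sharp}$. For any $M\in\DM_{h}(X,\Lambda)$ I then compute
\[
v^{*}f_{!}(M)\,=\,v^{*}p_{*}j_{\sharp}(M)\,\cong\,p'_{*}\bar{u}^{*}j_{\sharp}(M)\,\cong\,p'_{*}j'_{\sharp}u^{*}(M)\,=\,f'_{!}u^{*}(M),
\]
where the first isomorphism is proper base change applied to the lower square (valid because $p$ is proper) and the second is the $\sharp$-base change applied to the upper square (valid because $j$ is smooth, so that $j_{\sharp}$ restricts to $\DM_{h}$ and commutes with arbitrary pull-backs). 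All isomorphisms are natural in $M$, giving the first statement.

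The second isomorphism is then pure adjunction: the right adjoint of $v^{*}f_{!}$ is $f^{!}v_{*}$, while the right adjoint of $f'_{!}u^{*}$ is $u_{*}(f')^{!}$, and a natural isomorphism of left adjoints transposes canonically into a natural isomorphism of right adjoints. The main point to justify carefully is the $\sharp$-base change $\bar{u}^{*}j_{\sharp}\cong j'_{\sharp}u^{*}$; this is built into the six-functor formalism, but to check it by hand one uses that both functors preserve colimits and reduces to generators of the form $M(W)$ with $W\to X$ smooth and separated of finite type, for which both sides compute the motive of $W\times_{X}X'$ viewed as a smooth $\bar{X}'$-scheme. Independence from the chosen compactification is already absorbed into the well-definedness of $f_{!}$ recalled among the main properties above, so nothing new is needed on that front.
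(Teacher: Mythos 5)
Your proof is correct and follows exactly the route the paper intends: the theorem is stated there as a ``reformulation of the proper base change formula,'' and the implicit argument is precisely your combination of $f_{!}=p_{*}j_{\sharp}$ for a compactification, proper base change for $p$, the $\sharp$-base change (the mate of smooth base change) for the open immersion $j$, and passage to right adjoints for the second isomorphism. Nothing essential is missing.
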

\begin{rem}
Given a morphism of rings of coefficients $\Lambda\to\Lambda'$, there
is an obvious change of coefficients functor
\[\DM_h(X,\Lambda)\to\DM_h(X,\Lambda')\ , \quad
M\mapsto\Lambda'\otimes_\Lambda M\]
which is symmetric monoidal and commutes with the four
operations $f^*$, $f_*$, $f^!$ and $f_!$ whenever they are defined.
Moreover, one can show that an object $M$ in $\DM_h(X,\ZZ)$
is null if and only if $\QQ\otimes M\cong 0$
and $\ZZ/p\ZZ\otimes M\cong 0$ for any prime number $p$;
see \cite[Prop.~5.4.12]{CD4}.
Fortunately, $\DM_h(X,\Lambda)$ may be understood in more
tractable terms whenever $\Lambda=\QQ$ of $\Lambda$ is finite,
as we will see in the next section.
\end{rem}
\subsection{Representability theorems}
\begin{paragr}
We define \emph{\'etale motivic cohomology}\footnote{Also known
as Lichtenbaum cohomology.}\index{motivic cohomology!\'etale}
of $X$ with coefficients in $\Lambda$ as
\[H^i_M(X,\Lambda(n))=H^i(\Hom_{DM_{h}(X,\Lambda)}(\Lambda,\Lambda(n)))\]
for all $i,n\in\ZZ$.
\end{paragr}
\begin{thm}[Suslin-Voevodsky, Cisinski-D\'eglise]\index{K-theory@$K$-theory}
For any noetherian scheme of finite dimension $X$,
\[
H_{M}^{i}(X,\mathbf{Q}(n))\cong(KH_{2n-i}(X)\otimes\mathbf{Q})^{(n)}
\]
where $KH$ is the homotopy invariant K-theory of Weibel and the $"^{(n)}"$
stands for the fact that we take the intersection of the $k^{n}$-eigen-spaces
of the Adams operations $\psi_{k}$ for all $k$. For $X$ regular, we simply have
$H_{M}^{i}(X,\mathbf{Q}(n))\cong(K_{2n-i}(X)\otimes\mathbf{Q})^{(n)}$. In particular,
for $X$ regular and $n\in\ZZ$, we have:
\[ CH^n(X)\otimes\QQ\cong H_{M}^{2n}(X,\mathbf{Q}(n))\, .\]
\end{thm}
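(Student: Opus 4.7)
The plan is to reduce the statement to the representability of Weibel's homotopy invariant K-theory by a motivic ring spectrum in $\DMh(X,\QQ)$, and then to extract the motivic cohomology summands using Adams operations.

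First I would build a ring spectrum $\KGL_X$ in $\DMh(X,\QQ)$ representing $KH$-theory, so that
\[ KH_j(X)\otimes\QQ\cong H^{-j}\Hom_{\DMh(X,\QQ)}(\QQ,\KGL_X)\, .\]
For this, one starts from Cisinski's representability theorem for $KH$ in $\mathit{SH}(X)$ (using Bott periodicity and the projective bundle formula to build $\KGL$ as a $\mathbf{P}^1$-spectrum) and then transports $\KGL_\QQ$ along the canonical functor $\mathit{SH}(X)_\QQ\to\uDM_h(X,\QQ)$, which is an equivalence onto Morel's plus summand. Since $\KGL_\QQ$ is orientable it lies in that summand; and with $\QQ$-coefficients $h$-descent is automatic for this spectrum, since Nisnevich and $h$-local rational cohomologies agree for orientable theories, so $\KGL_X$ descends to $\DMh(X,\QQ)$.

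Next I would invoke the Adams operations $\psi_k$, which act as commuting ring endomorphisms of $\KGL_{X,\QQ}$. Standard eigenspace arguments then yield a canonical decomposition
\[ \KGL_{X,\QQ}\simeq\bigoplus_{n\in\ZZ}\KGL^{(n)}_{X,\QQ}\, .\]
The crucial step is the identification
\[ \KGL^{(n)}_{X,\QQ}\simeq H\QQ_X(n)[2n]\, ,\]
where $H\QQ_X(n)$ is the object of $\DMh(X,\QQ)$ representing $H^*_M(-,\QQ(n))$. Applying $\Hom_{\DMh(X,\QQ)}(\QQ,-)$ to this decomposition, taking cohomology in degree $-j$, and extracting the $n$-th eigenspace yields $(KH_j(X)\otimes\QQ)^{(n)}\cong H^{2n-j}_M(X,\QQ(n))$; setting $j=2n-i$ gives the stated formula. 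For $X$ regular, Weibel's theorem gives $KH_*(X)\otimes\QQ\simeq K_*(X)\otimes\QQ$, producing the second statement. The final consequence $CH^n(X)\otimes\QQ\cong H^{2n}_M(X,\QQ(n))$ follows by combining this with Grothendieck's classical identification of $CH^n(X)\otimes\QQ$ with the $n$-th graded piece of the $\gamma$-filtration on $K_0(X)\otimes\QQ$, which is precisely the $n$-th Adams eigenspace rationally.

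The main obstacle is the eigenspace identification $\KGL^{(n)}_{X,\QQ}\simeq H\QQ_X(n)[2n]$: this requires matching two a priori unrelated constructions, namely an Adams eigenspace inside $\KGL_\QQ$ and the motivic cohomology spectrum coming from $\uDM_h$. The cleanest way, following Riou, is to take the eigenspace decomposition as a \emph{definition} of $H\QQ(n)$ inside $\mathit{SH}(X)_\QQ$, then show that the resulting ring spectrum is characterized by a universal property (as the initial $\QQ$-linear oriented $E_\infty$-ring in Morel's plus part) which is also satisfied by the Eilenberg-MacLane object of $\uDM_h(X,\QQ)$. Verifying this matching rests on the Cisinski-D\'eglise formalism of premotivic adjunctions and on the equivalence between Morel's plus summand and the rational $h$-motivic category, both of which are available from the earlier parts of the excerpt.
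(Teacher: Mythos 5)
Your route is, in substance, the one the paper itself relies on: the paper gives no argument of its own, but simply cites the representability of $KH$ by $\KGL$ in the stable homotopy category (Voevodsky/Cisinski), the comparison theorem \cite[Theorem 5.2.2]{CD4} identifying the Adams-graded pieces of $\KGL_\QQ$ (Beilinson motivic cohomology, via Riou's decomposition and the equivalence of Morel's plus part of $\mathit{SH}(X)_\QQ$ with $\DMh(X,\QQ)$) with $h$-motivic cohomology, Weibel's theorem $KH_*\cong K_*$ for regular schemes, and the classical Grothendieck--Riemann--Roch identification of $CH^n(X)_\QQ$ with the $n$-th Adams eigenspace of $K_0(X)_\QQ$. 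So the skeleton of your proposal matches the intended proof.

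Two points in your sketch of the ``crucial step'' need repair, however. First, the universal property you invoke is false as stated: the initial $\QQ$-linear oriented $E_\infty$-ring in the plus part of $\mathit{SH}(X)_\QQ$ is $\mathit{MGL}_\QQ$ (which is oriented and orientable theories automatically lie in the plus part), and $\mathit{MGL}_\QQ$ is \emph{not} the Beilinson spectrum $\KGL^{(0)}_\QQ$ --- rationally it decomposes as a sum of Tate twists of it indexed by the Lazard ring. The correct characterization (Riou, Cisinski--D\'eglise) is that $\KGL^{(0)}_\QQ$ is the universal $\QQ$-linear \emph{oriented ring spectrum with additive formal group law}; without the additive-FGL condition the argument matching the eigenspace with the Eilenberg--MacLane object of $\uDM_h(X,\QQ)$ does not go through. (Alternatively, one simply takes $\KGL^{(0)}_\QQ$ as the definition of Beilinson motivic cohomology and quotes the comparison with $\DMh(X,\QQ)$, which is exactly what \cite[Theorem 5.2.2]{CD4} provides.) Second, your claim that $h$-descent is ``automatic'' for a $\QQ$-linear orientable theory compresses a genuine theorem: \'etale and finite flat descent rationally come from a transfer argument available on the plus part, while proper (cdh) descent comes from localization and proper base change; this is the descent theorem for Beilinson motives in \cite{CD3}, not a formal consequence of orientability. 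With these corrections your argument coincides with the proof in the references the paper cites.
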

The case where $X$ is separated and smooth of finite type over a field
is due to Suslin and Voevodsky (puting together the results
of \cite{SV} and of \cite{FSV}).
The general case follows from {\cite[Theorem~5.2.2]{CD4}},
using the representability theorem of $KH$
announced in \cite{voe} and proved in \cite{Cis4}. More generally,
one may recover motivically
$\mathbf Q$-linear Chow groups of possibly singular schemes
as well as Bloch's higher Chow groups as follows.
\begin{thm}[motivic cycle class]\index{motivic cohomology!and Chow groups}
\index{motivic cohomology!and higher Chow groups}\index{cycle class!motivic}
Let $f:X\to Spec(k)$ be separated of finite type. Then 
\[
H^0(\Hom_{DM_{h}(X,\mathbf{Q})}(\mathbf{Q}(n)[2n],f^{!}\mathbf{Q}))\cong CH_{n}(X)\otimes\mathbf{Q}
\]

and, if $X$ is equidimensional of dimension $d$, then\label{cycle class}
\[
H^0(\Hom_{\DM_{h}(X,\mathbf{Q})}(\mathbf{Q}(n)[i],f^{!}\mathbf{Q}))\cong CH^{d-n}(X,i-2n)\otimes\mathbf{Q}\, .
\]
\end{thm}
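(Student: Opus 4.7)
The plan is to reduce everything to the case where $X$ is smooth, where relative purity trivialises $f^!\mathbf{Q}$ and the preceding representability theorem applies directly; the extension to arbitrary $X$ then proceeds by dévissage, matching the motivic localization triangle against Bloch's localization long exact sequence for higher Chow groups.

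Suppose first that $X$ is smooth and equidimensional of dimension $d$ over $k$. Relative purity yields $f^!\mathbf{Q}\cong\mathbf{Q}(d)[2d]$, and hence
\[
H^0\Hom_{\DMh(X,\mathbf{Q})}(\mathbf{Q}(n)[i],f^!\mathbf{Q})\cong H_M^{2d-i}(X,\mathbf{Q}(d-n))\, .
\]
By the preceding theorem this group is $(K_{i-2n}(X)\otimes\mathbf{Q})^{(d-n)}$, and Bloch's comparison (extended to singular schemes by Levine and Friedlander--Suslin) identifies it with $CH^{d-n}(X,i-2n)\otimes\mathbf{Q}$. This settles the second formula on smooth equidimensional $X$; for $i=2n$ the identity $CH_n(X)=CH^{d-n}(X)$ yields the first formula as well.

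For general separated $X$ of finite type over $k$, the next step is an induction on $\dim X$. Using generic smoothness (or, in positive characteristic, a de Jong alteration combined with $h$-descent), I pick a dense smooth open $j:U\hookrightarrow X$ with closed complement $i:Z\hookrightarrow X$ of strictly smaller dimension. Using the identifications $i^!f^!\cong(fi)^!$ and $j^*f^!\cong(fj)^!$, the standard cofiber sequence
\[
i_*i^!f^!\mathbf{Q}\to f^!\mathbf{Q}\to j_*j^*f^!\mathbf{Q}
\]
produces, after applying $H^\ast\Hom(\mathbf{Q}(n)[i],-)$, a long exact sequence linking the motivic Borel--Moore invariants of $Z$, $X$ and $U$. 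Bloch's localization theorem provides a parallel long exact sequence of higher Chow groups with $\mathbf{Q}$-coefficients (decomposing $X$ into equidimensional components if needed). The cycle class map---built on irreducible closed subvarieties $W\subset X$ by pushing forward the fundamental class of a desingularization or alteration $\tilde W\to W$ to $X$---gives a natural transformation between the two sides; the inductive hypothesis handles the groups attached to $Z$ (by dimension) and to $U$ (by smoothness), and the five lemma concludes.

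The hard part will be establishing that the cycle class map thus constructed commutes with the boundary operators of the two localization long exact sequences. This forces one to handle Bloch's cycle complex functorially with respect to both open restriction and closed pushforward and to match the resulting connecting maps with the categorical boundary of the motivic cofiber sequence. An equivalent route---essentially the path taken in \cite[Theorem 5.2.2]{CD4}---is to characterize both sides as the unique Borel--Moore theory on separated $k$-schemes of finite type satisfying localization, $\mathbf{A}^1$-homotopy invariance, relative purity, and the expected value on smooth points, from which the isomorphism follows automatically.
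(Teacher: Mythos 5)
Your reduction is the natural one, and its first half is fine: for $X$ smooth (over a perfect field, or after passing to the perfection, which is harmless since universal homeomorphisms are invertible in the $h$-topology and higher Chow groups are rationally insensitive to purely inseparable extensions), relative purity turns the left-hand side into $H^{2d-i}_M(X,\mathbf{Q}(d-n))$, the representability theorem identifies this with $(K_{i-2n}(X)\otimes\mathbf{Q})^{(d-n)}$, and Bloch's Riemann--Roch isomorphism gives $CH^{d-n}(X,i-2n)\otimes\mathbf{Q}$. Note, however, that the paper does not prove the theorem at all: it quotes \cite[Cor.~8.12, 8.13, Rem.~9.7]{CD5}, where the comparison is obtained by matching Borel--Moore motivic homology with the Friedlander--Suslin--Voevodsky theory of relative cycles (Chow sheaves), which is precisely the machinery that makes the dévissage you describe actually run.

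The genuine gap is the step you yourself flag and then defer: the existence of a cycle class map defined on \emph{all} of Bloch's higher Chow groups, natural for open restriction and proper pushforward, and compatible with the localization boundary maps. This is not a technical afterthought but the entire content of the theorem beyond the smooth case. Your proposed construction (push forward the fundamental class of an alteration of each irreducible $W\subset X$) at best produces a map on the cycle group $Z_n(X)$; it is not shown to be independent of the alteration, to kill cycles rationally equivalent to zero, or--crucially--to extend to the groups $CH_n(X,m)$ with $m>0$. Yet even for the first displayed formula ($i=2n$) the five lemma forces you to compare the sequences one step further to the left, i.e.\ you need the isomorphism on $CH_n(U,1)$ and $CH_n(Z,m)$, so a map defined only on cycle classes in degree $0$ cannot close the induction; one needs a morphism at the level of Bloch's cycle complexes (or of the associated spectra), which is what the relative-cycle formalism provides. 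Finally, the fallback you invoke--``both sides are the unique Borel--Moore theory satisfying localization, homotopy invariance, purity and the right value on smooth schemes''--is not an available theorem: uniqueness statements of this kind presuppose a comparison morphism (or a representability/universal property), so it cannot substitute for constructing the map; and the reference you attach to it (\cite[Th.~5.2.2]{CD4}) is the $KH$-representability theorem, not such a characterization. As written, the proof therefore establishes the smooth case and the shape of the dévissage, but not the theorem.
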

This follows from
\cite[Corollaries 8.12 and 8.13, Remark 9.7]{CD5}.
These representability result may be used to see how classical
Grothendieck motives of smooth projective varieties
over a field $k$
may be seen in this picture: they form the full subcategory
of $\DM_{h}(\spec k,\mathbf{Q})$ whose objects are the
direct factors of motives of the form $M(U)(n)$ with $U$
smooth and projective over $k$ and $n\in\ZZ$).
The following statement is known as
\emph{rigidity theorem}\index{rigidity theorem}
\begin{thm}[Suslin-Voevodsky, Cisinski-D\'eglise]
Given a locally noetherian scheme $X$, there is a canonical equivalence
of $\infty$-categories
\[
DM_{h}(X,\Lambda)\cong D(Sh(X_{et},\Lambda))
\]
for $\Lambda$ of positive invertible characteristic on $X$, compatible
with 6-operations. In particular\label{rigidity theorem}
\[
H_{M}^{i}(X,\Lambda(j))\cong H_{et}^{i}(X,\mu_{n}^{\otimes j}\otimes\Lambda).
\]
\end{thm}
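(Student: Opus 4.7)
The plan is to reverse, in turn, each of the three constructions defining $\uDM_\h(X,\Lambda)$---$\h$-sheafification, $\mathbf{A}^1$-localization, and $\mathbf{P}^1$-inversion---and to show that under the hypothesis on $\Lambda$ each of them becomes transparent on the \'etale side. First I would identify $D(Sh_\h(X,\Lambda))$ with $D(Sh(X_{et},\Lambda))$. The Voevodsky-Rydh decomposition recalled above reduces $\h$-descent to the combination of \'etale (or Zariski) descent, proper descent, and invariance under universal homeomorphisms; proper surjections are of universal cohomological descent for \'etale torsion sheaves whose torsion is invertible on $X$ (SGA~4~V~bis), and universal homeomorphisms are invertible on such sheaves (SGA~4), so the $\h$- and \'etale topologies define equivalent derived categories in this range of coefficients.

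Next I would invoke Suslin-Voevodsky rigidity---in the form of Gabber rigidity in this generality, carried out in \cite{CD4}---to show that every object of $D(Sh(X_{et},\Lambda))$ is automatically $\mathbf{A}^1$-local: for any such $\mathcal{F}$ and any separated $X$-scheme $Y$ of finite type, the projection $Y\times\mathbf{A}^1\to Y$ induces an equivalence $\mathcal{F}(Y)\xrightarrow{\sim}\mathcal{F}(Y\times\mathbf{A}^1)$. Combined with Step~1 this yields $\uDM_\h^{\eff}(X,\Lambda)\simeq D(Sh(X_{et},\Lambda))$. Now Theorem~\ref{chern} supplies $\ZZ(1)[1]\cong\mathbf{G}_m$ in the effective category, and applying $-\otimes^L_\ZZ\Lambda$ to the Kummer sequence $1\to\mu_n\to\mathbf{G}_m\xrightarrow{n}\mathbf{G}_m\to 1$ (exact \'etale-locally since $n$ is invertible on $X$) identifies $\mathbf{G}_m\otimes^L\ZZ/n$ with $\mu_n[1]$, so $\Lambda(j)\cong\mu_n^{\otimes j}\otimes_\ZZ\Lambda$ is a locally constant invertible $\Lambda$-module, already $\otimes$-invertible. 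Hence $\mathbf{P}^1$-stabilization has no effect and $\uDM_\h(X,\Lambda)\simeq D(Sh(X_{et},\Lambda))$; passing to $\DM_\h(X,\Lambda)$ on the left is harmless since smooth $X$-schemes already generate the \'etale side under colimits. The cohomology identification $H^i_M(X,\Lambda(j))\cong H^i_{et}(X,\mu_n^{\otimes j}\otimes\Lambda)$ is then immediate from $\Lambda(j)\cong\mu_n^{\otimes j}\otimes\Lambda$ by taking $\Hom(\Lambda,-[i])$ on both sides.

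The main obstacle is not any of these three steps individually but propagating the equivalence to a compatibility with the full package of six operations: each of $f^*$, $f_*$, $f_!$, $f^!$, $\otimes$ and $\uHom$ on the motivic side must be matched with its \'etale counterpart. I would handle this by invoking the universal property characterizing $f^*$ as the unique symmetric monoidal, colimit-preserving extension from schemes, and similarly for $f_\sharp$ in the smooth case, together with proper base change and relative purity (available on both sides); the four remaining functors are then determined by adjunction, and the compactification formula $f_!=p_*j_\sharp$ pins down exceptional pushforward.
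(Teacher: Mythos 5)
Your step 3 (Kummer sequence plus Theorem \ref{chern} giving $\Lambda(1)\cong\mu_n\otimes\Lambda$, hence $\otimes$-invertibility and the superfluity of $\mathbf{P}^1$-stabilization) is exactly the mechanism the paper uses. But your steps 1 and 2 contain a genuine gap, and together they would prove something false. Step 1, as stated, claims an equivalence $D(Sh_\h(X,\Lambda))\simeq D(Sh(X_{et},\Lambda))$ \emph{before} $\mathbf{A}^1$-localization. What proper cohomological descent and invariance under universal homeomorphisms actually give is only that, for complexes pulled back from the small \'etale site, $\h$-hypercohomology agrees with \'etale hypercohomology; they do not identify the two categories. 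The big $\h$-category is generated under colimits by the representables $\Lambda(Y)$ for \emph{all} separated finite type $Y/X$, and their mapping complexes are not \'etale-cohomological before $\mathbf{A}^1$-localization: over an algebraically closed field $k$ with $\Lambda=\ZZ/n$, one has $\Hom_{D(Sh_\h)}(\Lambda(\mathbf{G}_m),\Lambda[1])\cong H^1_{et}(\mathbf{G}_m,\ZZ/n)\neq 0$, whereas the restriction of $\Lambda(\mathbf{G}_m)$ to the small site of $\spec{k}$ is a sheaf concentrated in degree $0$ built from a free module, so the corresponding $\Hom$ on the \'etale side vanishes. Relatedly, if steps 1 and 2 both held, the $\mathbf{A}^1$-localization would be an equivalence on the whole big category, and you would conclude $\uDM_\h(X,\Lambda)\simeq D(Sh(X_{et},\Lambda))$ --- a statement stronger than the theorem and not what is true; the theorem concerns the subcategory $\DM_\h$ generated by smooth representables, and your closing remark that ``passing to $\DM_\h$ is harmless since smooth schemes generate the \'etale side'' addresses essential surjectivity, not the replacement of $\uDM_\h$ by $\DM_\h$.

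The deeper issue is that your step 2 invokes rigidity for the wrong statement. That \'etale sheaves with invertible torsion are $\mathbf{A}^1$-local is not Suslin--Voevodsky or Gabber rigidity at all: it is homotopy invariance of \'etale cohomology (smooth base change and acyclicity of the affine line, SGA~4), a soft fact. That fact only shows that the comparison functor (restriction to the small site, applied \emph{after} localization, as in the paper) does not lose information on coefficients pulled back from $X_{et}$; it says nothing about the motivic generators. The actual hard content of the theorem is the converse direction: that the $\mathbf{A}^1$-localized $\h$-sheaves $M(Y)=\Lambda(Y)$, for $Y/X$ smooth, have \'etale mapping complexes --- equivalently, that they lie in the image of the small \'etale site --- and this is precisely the Suslin--Voevodsky rigidity theorem (comparing Suslin homology with finite coefficients to \'etale cohomology) over a field, and its relative version in \cite{CD4} in general. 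It is not a formal consequence of $\h$-descent plus homotopy invariance, and no part of your argument supplies it. Your sketch of the six-operations compatibility (matching $f^*$ by universal property, $f_\sharp$ for smooth $f$, the rest by adjunction and $f_!=p_*j_\sharp$) is the standard and reasonable route, but it presupposes the equivalence whose key step is the one missing above.
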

The case where $X$ is the spectrum of a field is essentially contained in
the work of Suslin and Voevodsky \cite{SV}.
See \cite[Corollary~5.5.4]{CD4} for the general case.
We should mention that the equivalence of categories above
is easy to construct. The main observation is Voevodsky's theorem \ref{chern},
together with the Kummer short exact sequence induced by $t\mapsto t^n$
\[0\to\mu_n\to\mathbf{G}_m\to\mathbf{G}_m\to 0\]
(where $\mu_n$ is the sheaf of $n$-th roots of unity),
from which follows the identification
$\Lambda(1)\cong\mu_n\otimes_{\ZZ/n\ZZ}\Lambda$, where $n$ is the characteristic of $\Lambda$.
In particular, $\Lambda(1)$ is already $\otimes$-invertible, which implies
(by inspection of universal properties) that
\[\uDM^{\eff}_h(X,\Lambda)\cong\uDM_h(X,\Lambda)\, .\]
On the other hand, $\uDM^{\eff}_h(X,\Lambda)$ is a full subcategory
of the derived category of $h$-sheaves of $\Lambda$-modules.
The comparison functor from $\uDM^{\eff}_h(X,\Lambda)$ to $D(Sh(X_{et},\Lambda))$
is simply the restriction functor. The precise formulation of the
previous theorem is that the composition
\[\DM_h(X,\Lambda)\subset\uDM_h(X,\Lambda)\cong\uDM_h^{\eff}(X,\Lambda)\to D(Sh(X_{et},\Lambda))\]
is an equivalence of $\infty$-categories.
\begin{rem}
If $char(\Lambda)=p^{i}$ then one proves that
$DM_{h}(X,\Lambda)\cong DM_{h}(X[\frac{1}{p}],\Lambda)$
(using the Artin-Schreier short exact sequence together with the localization
property) so that we can assume that the ring of functions on $X$ always has the
characteristic of $\Lambda$ invertible in it; see \cite{CD4}. \label{rigidity artin}
\end{rem}
\begin{rem}
One can have access to $H_{M}^{i}(X,\ZZ(n))$ via the coniveau spectral sequence
whose $E_1$ term is computed as Cousin complex, and thus gives rise to a nice and
rather explicit theory of residues; see \cite[(7.1.6.a) and Prop. 7.1.10]{CD4}.
\end{rem}
\section{Finiteness and Euler characteristic}

\subsection{Locally constructible motives}
\begin{paragr}
Recall that an object $X$ in a tensor category $C$ is
\emph{dualizable}\index{dualizable}
(we also say \emph{rigid}) if there exists $Y\in C$ such that $X\otimes-$ is left adjoint to
$Y\otimes-$. This provides an isomorphism $Y\cong \uHom(X,1_{C})$. In other words
$Y\otimes a\cong \uHom(X,a)$. This way, we get the evaluation map $\epsilon:Y\otimes X\to1_{C}$
and as well as the co-evaluation map $\eta:1_{C}\to X\otimes Y$. 
This exhibits the adjunction between the tensors. In particular, composing
$\epsilon$ and $\eta$ approriately tensored by $X$ or $Y$ gives the identity:
\[1_X: X\to X\otimes Y\otimes X\to X\quad\text{and}\quad
1_Y: Y\to Y\otimes X\otimes Y\to Y\, .\]
\end{paragr}
\begin{rem}\label{rem:dualizable functorial}
If $F:C\to D$ is a monoidal functor, if $x\in C$ dualizable then so is $F(x)$,
and $F(x^{\wedge})\cong F(x)^{\wedge}$.
Furthermore, $F$ also preserve internal $\uHom$ from $x$, since $\uHom(x,y)\cong x^\wedge\otimes y$
for all $y$.
\end{rem}
\begin{rem}
If $C\in D(Sh_{et}(X,\Lambda))$ then it is dualizable
if and only if it is locally constant with perfect fibers;
see \cite[Remark 6.3.27]{CD4}.
That means that $C$ is dualizable if and only if the following condition
holds: there is a surjective \'etale map $u:X'\to X$ together with a
perfect complex of $\Lambda$-modules $K \in Perf(\Lambda)$
(i.e. complex of $\Lambda$-modules $K$
which is quasi-isomorphic to a bounded
complex of projective $\Lambda$-modules
of finite type), and an isomorphism $K_ {X'}\cong u^*(C)$
in $D(Sh_{et}(X',\Lambda))$, where $K_ {X'}$ is the constant sheaf
on $X'$ associated to $K$.
\end{rem}
\begin{paragr}
Suppose $1/n\in\mathcal{O}_{X}$, $n=char(\Lambda)>0$.
Then $DM_{h}(X,\Lambda)\cong D(Sh_{et}(X,\Lambda))$.
Inside it, we have the subcategory $D_{ctf}^{b}(X_{et},\Lambda)$ of
constructible sheaves finite
tor-dimension\index{constructible!sheaves with finite tor dimension}.
If there is $d$ such that $cd(k(x))\le d$
for every point $x$ of $X$, then it is simply the subcategory of compact objects.
In general, this subcategory $D_{ctf}^{b}(X_{et},\Lambda)$ is important because
it is closed under the six operations. We look for correspondent in motives
with arbitrary ring of coefficients $\Lambda$.
We can characterise those \'etale sheaves by 
\[
\{C\in D(Sh_{et}(X,\Lambda))\, |\,
\exists\text{ stratification }X_{i}:C_{|X_{i}}\text{ locally constant with perfect fibers}\}
\]
Namely, an object $C$ of $D(Sh_{et}(X,\Lambda))$ is constructible of finite tor-dimension
if and only if there exists a finite stratification of $X$ by locally closed
subschemes $X_i$ together with $\phi_{i}:U_{i}\to X_{i}$ \'etale surjective for each $i$, and
there is $K_{i}\in Perf(\Lambda)$ (compact objects in the
derived category of $\Lambda$-modules),
and an isomorphism
$\phi_{i}^{*}(C_{|X_{i}})\cong({K}_{i})_{X_i}$ in the derived category of
sheaves of $\Lambda$-modules on the small \'etale site of $X$;
see~\cite[Remark~6.3.27]{CD4}. \label{par:ctf}
\end{paragr}
\begin{xca}[Poincar\'e Duality]\index{Poincar\'e duality}\index{duality!Poincar\'e}
Let $f:X\to Y$ be smooth and proper of relative dimension $d$.
Then, if $M\in DM_{h}(X,\Lambda)$ is dualizable, so is $f_{*}(M)$ and
\[
f_{*}(M)^{\wedge}\cong f_{*}(M^{\wedge})(-d)[-2d]
\]
with $M^\wedge=\uHom(M,\Lambda)$ the dual of $M$.
\end{xca}
\begin{defn} The $\infty$-category
$DM_{h,c}(X,\Lambda)$ of \emph{constructible}\index{motive!constructible}
\index{constructible!motive}
$\Lambda$-linear \'etale motives over $X$ is the smallest thick subcategory (closed under
shifts, finite colimits and retracts) containing $f_{\sharp}(\Lambda)(n)$
for any $f:U\to X$ smooth and every $n\in\mathbf{Z}$. 
\end{defn}
The following proposition is
an easy consequence of relative purity and of the proper base change formula.

\begin{prop} The $\infty$-category
$\DM_{h,c}(X,\Lambda)$ is equal to each of the following subcategories
of $\DM_{h}(X,\Lambda)$:
\begin{itemize}
\item The smallest thick subcategory containing $f_{*}(\Lambda)(n)$ for
$f:U\to X$ proper and $n\in\ZZ$. 
\item The smallest thick subcategory containing $f_{!}(\Lambda)(n)$ for
$f:U\to X$ separated of finite type and $n\in\ZZ$. 
\end{itemize}\label{proper devissage}
\end{prop}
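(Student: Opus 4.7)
The plan is to establish $\mathcal{A}\subseteq\mathcal{C}\subseteq\mathcal{B}\subseteq\mathcal{A}$, writing $\mathcal{A}=\DM_{h,c}(X,\Lambda)$ and $\mathcal{B}$, $\mathcal{C}$ for the two alternative descriptions. The inclusion $\mathcal{A}\subseteq\mathcal{C}$ is immediate from the corollary to relative purity: for smooth separated $f:U\to X$ of relative dimension $d$, one has $f_\sharp(\Lambda)(n)\cong f_!(\Lambda)(n+d)[2d]$, so every generator of $\mathcal{A}$ is, up to Tate twist and shift, a generator of $\mathcal{C}$. The inclusion $\mathcal{B}\subseteq\mathcal{C}$ is trivial since $f_*=f_!$ whenever $f$ is proper (the trivial Nagata compactification).

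For $\mathcal{C}\subseteq\mathcal{B}$, I would use Nagata's theorem to write any separated morphism $f:U\to X$ of finite type as $f=p\circ j$ with $j:U\hookrightarrow\bar U$ an open immersion and $p:\bar U\to X$ proper. Applying the exact functor $p_*$ to the localization cofiber sequence
\[
j_\sharp(\Lambda_U)\to\Lambda_{\bar U}\to i_*(\Lambda_Z)
\]
in $\DM_h(\bar U)$ (where $i:Z\to\bar U$ is the closed complement of $j$) produces a cofiber sequence $f_!(\Lambda_U)\to p_*(\Lambda_{\bar U})\to(pi)_*(\Lambda_Z)$ whose two outer terms lie in $\mathcal{B}$, since $p$ and $pi$ are proper.

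The substantial content lies in the remaining inclusion $\mathcal{B}\subseteq\mathcal{A}$. For a smooth proper $q:Y\to X$ of relative dimension $d$, relative purity combined with $q_*=q_!$ gives $q_*(\Lambda)(n)\cong q_\sharp(\Lambda)(n-d)[-2d]\in\mathcal{A}$. For a closed immersion $i:Z\to X$ with open complement $j:U\to X$, the localization cofiber sequence $j_\sharp(\Lambda_U)\to\Lambda_X\to i_*(\Lambda_Z)$ places $i_*(\Lambda_Z)(n)$ in $\mathcal{A}$ as a cone between two generators. A general proper $f:U\to X$ is then handled by Chow's lemma, which furnishes a proper birational modification $\pi:U'\to U$ with $U'$ embedded as a closed subscheme of some $\mathbf{P}^N_X$; I would compare $f_*(\Lambda_U)$ with $(f\pi)_*(\Lambda_{U'})\cong q_*i_*(\Lambda_{U'})$ by means of the cofiber sequence on $U$ supported on the exceptional locus of $\pi$, the relevant comparisons being organized with proper base change.

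The main obstacle is to close this argument cleanly by noetherian induction on $\dim U$: the exceptional locus of $\pi$ and the closed complement of $U'$ in $\mathbf{P}^N_X$ are of strictly smaller dimension than $U$ and $\mathbf{P}^N_X$ respectively, so each reduction step strictly decreases the dimension of the scheme to which the induction is applied. A subtle parallel point is that $i_*$ must be shown to send all of $\DM_{h,c}(Z,\Lambda)$ (not only $\Lambda_Z$) into $\mathcal{A}$: for a generator $g_\sharp(\Lambda_W)(n)$ with $g:W\to Z$ smooth separated of relative dimension $d$, relative purity rewrites $i_*g_\sharp(\Lambda_W)(n)\cong (ig)_!(\Lambda_W)(n+d)[2d]$, so the question collapses to the statement $\mathcal{C}\subseteq\mathcal{A}$ already under treatment and folds naturally into the same induction.
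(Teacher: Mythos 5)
Your global architecture is the intended one: the paper's one-line justification (``relative purity and proper base change'') unwinds exactly into your d\'evissage, and the first three inclusions are fine as you state them ($\mathcal{A}\subseteq\mathcal{C}$ by the corollary to relative purity, $\mathcal{B}\subseteq\mathcal{C}$ trivially, $\mathcal{C}\subseteq\mathcal{B}$ by Nagata plus $p_*$ applied to the localization sequence). The comparison of $\Lambda_U$ with $\pi_*\Lambda_{U'}$ is also legitimate: by proper base change and localization (check after $j^*$ and $i_T^*$), the square with vertices $\Lambda_U$, $\pi_*\Lambda_{U'}$, $i_{T*}\Lambda_T$, $i_{T*}\pi_{T*}\Lambda_{T'}$ is Cartesian, where $T$ is a nowhere dense closed subscheme outside which $\pi$ is an isomorphism and $T'=\pi^{-1}(T)$.

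The genuine gap is in how you close the induction. First, the claim that ``the closed complement of $U'$ in $\mathbf{P}^N_X$ is of strictly smaller dimension than $\mathbf{P}^N_X$'' is false: $U'$ is closed in $\mathbf{P}^N_X$, so its complement $V$ is open and typically of full dimension; worse, anything attached to $\mathbf{P}^N_X$ has dimension on the order of $\dim X+N$, which is unrelated to (usually larger than) $\dim U$, so it cannot be fed into an induction on $\dim U$. Second, your ``subtle parallel point'' does not fold into that induction either: rewriting $i_*g_\sharp\Lambda_W(n)\cong(ig)_!\Lambda_W(n+d)[2d]$ reduces you to $\mathcal{C}\subseteq\mathcal{A}$ for morphisms whose source $W$ is an arbitrary smooth scheme over $Z$ (or over $\mathbf{P}^N_X$), of unbounded dimension, so the inductive hypothesis never applies and the argument is circular as written. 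Both problems disappear if you keep constant coefficients throughout: apply $f_*$ to the Cartesian square above, so that the only terms needing the inductive hypothesis are $(fi_T)_*\Lambda_T(n)$ and $(fi_T\pi_T)_*\Lambda_{T'}(n)$, whose sources have dimension $<\dim U$; and treat $(f\pi)_*\Lambda_{U'}=q_*i_*\Lambda_{U'}$ directly by applying $q_*$ to the localization sequence $j'_\sharp\Lambda_V\to\Lambda_{\mathbf{P}^N_X}\to i_*\Lambda_{U'}$ on $\mathbf{P}^N_X$: since $V$ is open in $\mathbf{P}^N_X$ it is smooth over $X$, so $q_*j'_\sharp\Lambda_V\cong(qj')_!\Lambda_V\cong(qj')_\sharp\Lambda_V(-N)[-2N]$ and $q_*\Lambda_{\mathbf{P}^N_X}\cong q_\sharp\Lambda(-N)[-2N]$ are generators of $\mathcal{A}$ up to twist and shift, with no induction and no statement about $i_*$ preserving constructibility needed. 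With that repair (and the harmless reductions to $U$ reduced and $X$ of finite dimension), your proof is the standard one.
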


\begin{thm}[Absolute Purity]\index{purity!absolute}\index{absolute purity}
If $i:Z\to X$ is a closed emmersion
and assume that both $X,Z$ are regular. Let $c=codim(Z,X)$. Then
there is a canonical isomorphism 
\[
i^{!}(\Lambda_{X})\cong\Lambda_{Z}(-c)[-2c].
\]
\end{thm}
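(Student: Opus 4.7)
The plan is to reduce the statement to the case of the zero section of a vector bundle by deformation to the normal cone, where the formula follows immediately from relative purity.

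First I would dispose of the vector bundle case. Let $\pi\colon E\to Z$ be a vector bundle of rank $c$ over a regular base, and let $s\colon Z\to E$ denote its zero section, which is a regular closed immersion of codimension $c$. Because $\pi s=\mathrm{id}_Z$, functoriality of $(-)^!$ gives $s^!\pi^!\cong\mathrm{id}$ on $\DM_h(Z,\Lambda)$. Relative purity (applied to the smooth morphism $\pi$ of relative dimension $c$) provides $\pi^!\cong\pi^*(c)[2c]$. Hence $s^!(\pi^*M(c)[2c])\cong M$ for every $M$, and evaluating at $M=\Lambda_Z$ yields $s^!(\Lambda_E)\cong\Lambda_Z(-c)[-2c]$, exactly the desired formula.

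Next I would globalise by the standard deformation to the normal cone. Given a regular closed immersion $i\colon Z\to X$ of codimension $c$, regularity ensures that the normal bundle $N=N_{Z/X}$ is an honest rank-$c$ vector bundle. Form the deformation space $D=\mathit{Bl}_{Z\times\{0\}}(X\times\mathbf{A}^1)$ with the strict transform of $X\times\{0\}$ removed. It comes equipped with a flat morphism to $\mathbf{A}^1$ and with a regular closed immersion $\tilde\imath\colon Z\times\mathbf{A}^1\hookrightarrow D$ of codimension $c$, whose fibre over $1\in\mathbf{A}^1$ recovers $i\colon Z\to X$ and whose fibre over $0\in\mathbf{A}^1$ is the zero section $s\colon Z\to N$. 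The strategy is to show that $\tilde\imath^!(\Lambda_D)$, viewed as an object on $Z\times\mathbf{A}^1$, is the pullback of a single object on $Z$, so that its restrictions to $\{1\}$ and to $\{0\}$ agree canonically in $\DM_h(Z,\Lambda)$. Combined with the vector bundle case applied to the fibre over $0$, this provides the sought canonical isomorphism $i^!(\Lambda_X)\cong\Lambda_Z(-c)[-2c]$.

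The main obstacle is the homotopy-invariance step: one must check that $\tilde\imath^!(\Lambda_D)$ is constant in the $\mathbf{A}^1$-direction, equivalently, that the specialisation map from the fibre over $\mathbf{G}_m$ to the fibre over $\{0\}$ is an isomorphism. This uses the full six-functor package on $\DM_h$: proper and smooth base change to move $(-)^!$ past the inclusions of the two points of $\mathbf{A}^1$, the localisation cofiber sequence along $\mathbf{A}^1=\{0\}\sqcup\mathbf{G}_m$, and $\mathbf{A}^1$-homotopy invariance for the projection $Z\times\mathbf{A}^1\to Z$. Once the specialisation is established, no further input is needed: the fibre over $0$ is already explicitly computed by the vector bundle case, and the canonicity of the resulting isomorphism is automatic from the construction.
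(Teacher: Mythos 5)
Your vector bundle computation is fine, and deformation to the normal cone is indeed the standard first move; but the central step of your argument --- that $\tilde\imath^!(\Lambda_D)$ is ``constant in the $\mathbf{A}^1$-direction'' and that this follows from base change, localization and $\mathbf{A}^1$-homotopy invariance --- is precisely where the real content of absolute purity sits, and it is not a formal consequence of the six-functor package. The exchange maps you would need, comparing $\tilde\imath^!(\Lambda_D)$ restricted to the fibres over $1$ and over $0$ with $i^!(\Lambda_X)$ and $s^!(\Lambda_N)$ respectively, are base change maps between a $*$-pullback and a $!$-pullback along squares in which neither leg is smooth or proper in the relevant direction; such maps exist, but nothing formal makes them invertible. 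Likewise, $\mathbf{A}^1$-homotopy invariance tells you that objects pulled back from $Z$ behave well, but it gives you no criterion for recognising that $\tilde\imath^!(\Lambda_D)$ \emph{is} such a pullback --- that is exactly the statement to be proved. A useful sanity check: if your argument worked, absolute purity would hold verbatim in any motivic formalism with the six operations, relative purity and localization (e.g.\ the stable homotopy category of schemes), where it is known to be a genuinely difficult, non-formal statement. The regularity of $X$ and $Z$ must enter beyond merely guaranteeing that the normal cone is a vector bundle.

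The paper does not attempt a formal proof either: it refers to \cite[Theorem 5.6.2]{CD4} and sketches the actual inputs. Deformation to the normal cone (together with the rigidity theorem, which reduces torsion coefficients to Gabber's absolute purity theorem for \'etale sheaves) reduces everything to the case $\Lambda=\QQ$; there one uses Quillen's localization fibre sequence comparing $K(Z)$, $K(X)$, $K(X-Z)$ with the coherent $K$-theory spectra, the representability of $K$-theory in the motivic stable homotopy category, and a variant of the Adams--Riemann--Roch theorem to produce and invert the purity isomorphism rationally. So the specialization map of your deformation argument is ultimately inverted by these $K$-theoretic (resp.\ Gabber-theoretic) inputs, not by homotopy invariance; to repair your proposal you would have to import one of these ingredients at exactly the point where you claim ``no further input is needed.''
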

See \cite[Theorem 5.6.2]{CD4}
\begin{rem}
Modulo the rigidity theorem \ref{rigidity theorem},
the proof for the case of finite coefficients is due to Gabber and
was known for a while,
with two different proofs \cite{Fuj,ILO} (although, in characteristic zero, this
goes back to Artin in SGA~4).
After formal reductions using deformation to the normal cone,
one sees that, in order to prove the
absolute purity theorem above, it is then sufficient to consider the case where $\Lambda=\QQ$.
The idea is then that Quillen's localization fiber sequence
\[
\begin{tikzcd}
K(Z)\ar{r}\ar{d}{\wr} & K(X)\ar{r}\ar{d}{\wr} & K(X-Z)\ar{d}{\wr}\\
K(Coh(Z))\ar{r} & K(Coh(X))\ar{r} & K(Coh(X-Z))
\end{tikzcd}
\]
induces a long exact sequence which we may tensor with $\QQ$, and
Absolute purity is then proved using the representability theorem of $K$-theory in the motivic
stable homotopy category together with a variation on the Adams-Riemann-Roch
theorem.
\end{rem}
We recall that a locally noetherian scheme $X$ is
\emph{quasi-excellent}\index{quasi-excellent scheme}
if the following two conditions are verified:
\begin{itemize}
\item[1.] For any point $x\in X$, the completion
map $\mathcal{O}_{X,x}\to\hat{\mathcal{O}}_{X,x}$
is regular (i.e., for any field extension $K$ of the residue field $\kappa(x)$,
the noetherian ring $K\otimes_{\kappa(x)}\hat{\mathcal{O}}_{X,x}$ is regular).
\item[2.] For any scheme of finite presentation $Y$ over $X$,
there is a regular dense open subscheme $U\subset Y$.
\end{itemize}
A locally noetherian scheme is \emph{excellent}
if it is quasi-excellent and universally catenary.
In practice,
what needs to be known is that
any scheme of finite type over a quasi-excellent scheme is quasi-excellent, and
$Spec(R)$ is excellent whenever $R$ is either a field or the ring of integers
of a number field (note also that noetherian complete local rings are
excellent).

\begin{thm}[de Jong-Gabber~{\cite{ILO}}] Any
quasi-excellent scheme is regular locally
for the $h$-topology. In other words, for any quasi-excellent scheme $X$, there
exists an $h$-covering $\{X_i\to X\}_i$ with each $X_i$ regular.
Furthermore, locally for the $h$-topology any
nowhere dense closed subscheme of $X$ is either empty of a divisor with normal crossings:
given any nowhere dense closed subscheme $Z\subset X$, we may choose
the covering above such that the pullback of $Z$ in each $X_i$ is either empty or
a divisor with normal crossings.
Even better, given a prime $\ell$ invertible in $\mathcal{O}_X$,
we may always choose $h$-coverings
$\{X_i\to X\}_i$ as above such that, for each point $x\in X$, there exists an $i$ and there
exists $x_{i}\in X_{i}$ such that $p_{i}(x_{i})=x$ and such that
$[k(x_{i}):k(x)]$ is prime to $\ell$.\label{resolution of singularities}
\end{thm}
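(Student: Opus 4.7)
The plan is to invoke Gabber's refined alteration theorems from \cite{ILO} essentially as a black box, and to translate them into the language of $h$-coverings using the fact already noted that surjective proper maps are universal topological epimorphisms. The substantive content of the theorem really lies in those alteration results; the role of a proof in these notes is to explain how to repackage them.

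First, I would recall Gabber's (ungraded) weak uniformization theorem: for any quasi-excellent noetherian scheme $X$ and any nowhere dense closed subscheme $Z\subset X$, there exists a projective surjective morphism $p:X'\to X$, generically finite and generically \'etale, with $X'$ regular and $p^{-1}(Z)$ either empty or a strict normal crossings divisor on $X'$. Because proper surjective maps are universal epimorphisms (hence single-morphism $h$-coverings in the sense defined above), the family $\{p\}$ is the desired $h$-covering and yields the first two assertions. In particular no iteration is required for the existence statement alone.

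For the last assertion, I would then apply Gabber's stronger \emph{prime-to-$\ell$} alteration theorem. This produces, for a prime $\ell$ invertible in $\mathcal{O}_X$, a finite family $\{p_i:X_i\to X\}_i$ as above (each $X_i$ regular and pullbacks of $Z$ being SNC or empty) with the additional property that for each point $x\in X$ at least one index $i$ and one $x_i\in p_i^{-1}(x)$ satisfies $[k(x_i):k(x)]$ prime to $\ell$. The proof of this refinement in \cite{ILO} proceeds by a delicate noetherian induction on $\dim X$: one first constructs, using local uniformization over the henselization at a chosen point, an alteration achieving the prime-to-$\ell$ condition over a dense open; then one applies the inductive hypothesis to the complement (which has smaller dimension), and finally takes the disjoint union of the two coverings.

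The main obstacle, of course, is the prime-to-$\ell$ refinement itself, whose proof occupies several exp\'es of \cite{ILO} and relies on Gabber's construction of generically prime-to-$\ell$ covers via maximal orders in central simple algebras together with a careful analysis of log regular pairs. For the purposes of these notes I would treat this as an input and simply verify that, once one has the family of alterations as stated, the $h$-covering interpretation is automatic from the fact that proper surjections are universal epimorphisms.
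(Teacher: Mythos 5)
The paper offers no proof of this statement at all: it is quoted directly from Gabber's work as presented in \cite{ILO}, so your strategy---treating the (weak uniformization and prime-to-$\ell$ alteration) theorems as a black box and observing that proper surjective maps and Zariski covers are $h$-coverings---is essentially identical to the paper's treatment. One caveat: for a general quasi-excellent scheme, Gabber's weak uniformization does \emph{not} furnish a single projective, generically finite and generically \'etale morphism $X'\to X$ with $X'$ regular, as you assert, but only a finite covering family for the alteration topology (which is exactly what the statement requires and what your $h$-covering translation actually uses, so the overclaim is harmless); similarly, your parenthetical sketch of the internal proof of the prime-to-$\ell$ refinement (e.g.\ the role of maximal orders in central simple algebras) does not reflect the actual argument in \cite{ILO}, which rests on equivariant alterations and quotients by $\ell$-Sylow subgroups together with log-regular (toric) techniques, but this is immaterial since you treat it as an input.
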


\begin{rem}
One can show that the category $DM_{h,c}(X,\Lambda)$ is preserved
by the 6 operations. However, there is a drawback: unless we make finite
cohomological dimension assumptions,
the category $DM_{h,c}$ in not always a sheaf for the \'etale topology! Here is
its \'etale sheafification (which can be proved to be a sheaf of $\infty$-categories
for the $h$-topology).
\end{rem}
\begin{defn}
A motivic sheaf $M$ is in $\DM_{h}(X,\Lambda)$
is \emph{locally constructible}\index{locally constructible}\index{motive!locally constructible}
if there
is an \'etale surjection $f:U\to X$ such that $f^{*}M\in DM_{h,c}(X,\Lambda)$. 

Denote the full subcategory of locally constructible
motives by $\DM_{h,lc}(X,\Lambda)$. \label{def:locallyconstr}
\end{defn}
\begin{rem}
If $\QQ\subset \Lambda$, then $\DM_{h,c}(X,\Lambda)=\DM_{h,lc}(X,\Lambda)$
simply is the full subcategory of compact objects in $\DM_{h}(X,\Lambda)$;
see \cite[Prop.~6.3.3]{CD4}.
\end{rem}
\begin{thm}[Cisinski-D\'eglise]
The equivalence $\DM_{h}(X,\Lambda)\cong D(X_{et},\Lambda)$ restricts to an equivalence of
$\infty$-categories
\[
\DM_{h,lc}(X,\Lambda)\cong D_{ctf}^{b}(X,\Lambda)
\]
whenever $\Lambda$ is noetherian of positive characteristic $n$, with $\frac{1}{n}\in\mathcal{O}_X$.\label{rigidity}
\end{thm}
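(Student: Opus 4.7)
The plan is to combine the rigidity theorem \ref{rigidity theorem} (which already gives the ambient equivalence $\DM_h(X,\Lambda)\cong D(Sh(X_{et},\Lambda))$ and its compatibility with the six operations) with the observation that both the locally constructible motives and the bounded constructible sheaves of finite tor-dimension are \'etale-local notions. More precisely, $D^b_{ctf}(X,\Lambda)$ is closed under the six operations and can itself be characterised by the fact that its defining condition is \'etale local on $X$; so I only need to prove two inclusions up to passage to an \'etale cover.

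First I would check that the rigidity equivalence sends $\DM_{h,c}(X,\Lambda)$ into $D^b_{ctf}(X,\Lambda)$. By definition $\DM_{h,c}$ is the smallest thick subcategory containing the objects $f_\sharp(\Lambda)(n)$ for $f:U\to X$ smooth separated of finite type. By the corollary to relative purity, $f_\sharp(\Lambda)\cong f_!(\Lambda)(d)[2d]$, and under the identification with classical \'etale sheaves these correspond to $f_!(\Lambda_U)(n+d)[2d]$, which are known to lie in $D^b_{ctf}(X,\Lambda)$ (this is essentially an SGA~4 finiteness statement once the constructibility of $\Lambda_U$ is recalled). Since $D^b_{ctf}(X,\Lambda)$ is a thick subcategory of $D(Sh(X_{et},\Lambda))$, the inclusion $\DM_{h,c}(X,\Lambda)\subseteq D^b_{ctf}(X,\Lambda)$ follows, and in particular $\DM_{h,lc}(X,\Lambda)\subseteq D^b_{ctf}(X,\Lambda)$ because the right-hand side is \'etale local.

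The converse direction is the substantive one. Given $C\in D^b_{ctf}(X,\Lambda)$, I would use the stratified description recalled in paragraph \ref{par:ctf}: there is a finite stratification $X=\coprod_i X_i$ by locally closed subschemes and, for each $i$, an \'etale cover $\phi_i:U_i\to X_i$ trivialising $C|_{X_i}$ to a constant sheaf $(K_i)_{U_i}$ with $K_i\in\mathrm{Perf}(\Lambda)$. After refining the stratification using the de Jong--Gabber resolution theorem \ref{resolution of singularities} (applied to the closures of the strata, so as to ensure that each $\overline{X_i}$ is regular locally for the $h$-topology, with complements being divisors with normal crossings), I can argue by induction on the number of strata via the localization cofiber sequence $j_\sharp j^* M\to M\to i_* i^* M$. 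This reduces the problem to the case where $C$ is (the pushforward of) a locally constant sheaf with perfect fibers on a regular stratum, and then \'etale locally to the case where $C=K_X$ is a constant sheaf associated to $K\in\mathrm{Perf}(\Lambda)$. Since every perfect complex of $\Lambda$-modules is obtained from $\Lambda$ by finitely many shifts, finite colimits and retracts, and since $\Lambda=M(X)$ already lies in $\DM_{h,c}(X,\Lambda)$, the motive $K_X$ is constructible, so $C$ is \'etale locally in $\DM_{h,c}$, hence in $\DM_{h,lc}$.

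The main obstacle will be the dévissage on strata: one must ensure that the six-functor formalism on the motivic side and on the \'etale side match at each step, so that the closed/open cofiber sequence used to rebuild $C$ from its restrictions to strata really lifts through the rigidity equivalence. This is where absolute purity (available on both sides because of the invertibility of $n$ on $X$) and de Jong--Gabber's theorem play an essential role, by allowing us to reduce, \'etale locally, to strata which are regular and to closed immersions between regular schemes where $i^!(\Lambda)$ is a Tate twist; only then can the extension-by-zero construction of a stratified constructible sheaf be transported from the \'etale world to $\DM_h$ without obstruction.
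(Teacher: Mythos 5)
The paper gives no proof of Theorem \ref{rigidity} (it is quoted from \cite{CD4}), so your sketch can only be judged on its own terms. Its overall shape is reasonable: the forward inclusion via the generators $f_\sharp\Lambda(n)\cong f_!\Lambda(n+d)[2d]$, the compatibility of the equivalence of Theorem \ref{rigidity theorem} with the operations, and the \'etale-local nature of $D^b_{ctf}$ is fine; and reducing the converse, via the stratified description of \ref{par:ctf}, to \'etale-locally constant objects and then to constant ones (using that $K_X$ is constructible for $K\in Perf(\Lambda)$, since $K$ is built from $\Lambda$ by shifts, finite colimits and retracts) is the natural route. However, the appeal to de Jong--Gabber (Theorem \ref{resolution of singularities}) and to absolute purity is misplaced: Theorem \ref{resolution of singularities} requires quasi-excellence, which is not a hypothesis of the statement; its output is an $h$-cover, not an \'etale cover, so it is not even of the right type for checking local constructibility; and no regularity of strata is needed in the d\'evissage you propose, since only $i^*,j^*,i_*,j_\sharp$ occur there and never $i^!$, so absolute purity has nothing to compute. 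In the paper these two ingredients are explicitly attributed to the \emph{next} theorem (stability of $\DM_{h,lc}$ under the six operations over quasi-excellent schemes), not to Theorem \ref{rigidity}; importing them here misidentifies what makes the statement true and would silently restrict its generality.

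More seriously, the induction on strata has a genuine hole. To conclude from the cofiber sequence $j_\sharp j^*M\to M\to i_*i^*M$ that $M$ is locally constructible once $i^*M$ and $j^*M$ are, you need that $i_*$ and $j_\sharp$ preserve \emph{local} constructibility. For $i_*$ (a closed immersion) this can be arranged: an \'etale surjection of $Z$ lifts, Zariski-locally, to \'etale neighbourhoods in $X$, and adding $U$ gives an \'etale surjection of $X$ doing the job. But for $j_\sharp$, with $j:U\to X$ an open immersion, it is not formal: an \'etale surjection $V\to U$ making $j^*M$ constructible need not be refined by the restriction to $U$ of any \'etale surjection $X'\to X$ (a cover of $\mathbf{G}_m$ ramified over the origin of $\mathbf{A}^1$ already shows this), and constructibility in $\DM_{h}$ -- unlike the ctf property on the \'etale side -- is not known at this stage to descend along \'etale surjections; indeed this failure is exactly why ``locally constructible'' is introduced. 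So your step ``argue by induction via the localization sequence, hence $C$ is \'etale locally in $\DM_{h,c}$'' is precisely the gluing statement of Theorem \ref{thm:generic rigidity}(2) (i.e. \cite[Theorem~6.3.26]{CD4}), used implicitly and without proof. If you allow yourself that result, your argument closes up (and the de Jong--Gabber detour should simply be deleted); as written, this is the missing step.
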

See \cite[Theorem~6.3.11]{CD4}.

For any morphism of noetherian schemes $f:X\to Y$, the functor
$f^*$ sends locally constructible $h$-motives to locally constructible $h$-motives,
and, in the case where $f$ is separated of finite type, so does
the functor $f_!$.
The theorem of de Jong-Gabber above, together with Absolute Purity, are the main ingredients in the proof
of the following finiteness theorem.
\begin{thm}[Cisinski-D\'eglise]
The six operations preserve locally const\-ruct\-ible
$\h$-mo\-tives, at least when restricted
to separated morphisms of finite type between quasi-excellent noetherian schemes of finite
dimension:
\begin{enumerate}
\item for any such scheme $X$ and any locally constructible $\h$-motives $M$ and $N$ over $X$,
the $\h$-motives $M\otimes N$ and $\sHom(M,N)$ are locally constructible;
\item for any morphism of finite type $f:X\to Y$ between quasi-excellent noetherian schemes of finite
dimension, the four functors $f^*$, $f_*$, $f_!$, and $f^!$ preserve the property
of being locally constructible.
\end{enumerate}
\end{thm}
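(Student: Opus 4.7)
The plan is to reduce all claims to a single statement: for $j: U \hookrightarrow X$ an open immersion between quasi-excellent noetherian schemes of finite dimension, the functor $j_*$ preserves local constructibility. Indeed, $f^*$ and $f_!$ are already known to preserve local constructibility by the paragraph preceding the theorem. By Nagata's theorem one factors any separated morphism of finite type as $f = p \circ j$ with $j$ an open immersion and $p$ proper; since $p_* \cong p_!$ in the proper case, $f_* = p_* j_*$ then inherits the property. For $f^!$, one combines relative purity with the localization cofiber sequence $i_* i^! M \to M \to j_* j^* M$: assuming $j_*$ is controlled, so is $i^!$ for a closed immersion $i$, and the general case reduces to a closed immersion into a smooth map via the compactification of $f$. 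Tensor products reduce to the generators of Proposition \ref{proper devissage}: the projection formula gives $u_\sharp(\Lambda)(n)\otimes v_\sharp(\Lambda)(m)\cong (u\times_X v)_\sharp(\Lambda)(n+m)$, which is locally constructible since $u\times_X v$ is smooth. Once $\otimes$ is established, the identity $\sHom(f_!\Lambda(n),N)\cong f_*f^!(N)(-n)$ reduces $\sHom$ to $f_*$ and $f^!$.

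To treat $j_*$, I would use the detection principle that $M\cong 0$ if and only if $\QQ\otimes M\cong 0$ and $\ZZ/p\ZZ\otimes M\cong 0$ for every prime $p$ (cited in the excerpt via \cite[Prop.~5.4.12]{CD4}), together with the fact that change of coefficients commutes with all six operations. For $\Lambda$ finite of characteristic invertible on $X$ (which we may assume by Remark \ref{rigidity artin}), the rigidity theorem \ref{rigidity} identifies $\DM_{h,lc}(X,\Lambda)\cong D^b_{ctf}(X_{et},\Lambda)$ compatibly with the six operations; stability of $D^b_{ctf}$ under $j_*$ for $j$ of finite type between quasi-excellent schemes is Deligne--Gabber's classical finiteness theorem, in the form proved in \cite{ILO}. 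This disposes of the torsion case.

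The remaining, and main, case is $\Lambda=\QQ$. By Theorem \ref{resolution of singularities} there is an $h$-covering $\{X_i\to X\}_i$ with each $X_i$ regular and with the pullback of the reduced complement $Z=X\setminus U$ either empty or a strict normal crossings divisor in $X_i$. Since over $\QQ$ the property of local constructibility is $h$-local on the base (it is, in fact, equivalent to compactness, by the remark following Definition \ref{def:locallyconstr}), we may replace $X$ by one of the $X_i$ and so assume $X$ regular with $Z$ a strict normal crossings divisor. A dévissage on $M$ along Proposition \ref{proper devissage} reduces $j_*M$ to iterated pushforwards of Tate twists along the locally closed strata of $Z$, and for each such stratum $i_{Z_I}:Z_I\hookrightarrow X$ (an intersection of smooth components of $Z$) Absolute Purity gives $i_{Z_I}^!\Lambda\cong \Lambda(-c)[-2c]$ with $c=\mathrm{codim}(Z_I,X)$, which is manifestly locally constructible. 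Reassembling through the localization triangles exhibits $j_*M$ as an iterated extension of locally constructible objects.

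The main obstacle is the $\QQ$-linear case: controlling $j_*$ on an arbitrary locally constructible $M$ (rather than merely on $\Lambda$) requires organizing the stratification-by-stratification dévissage carefully, so that the normal-crossings geometry provided by de Jong--Gabber interacts cleanly with the generators $g_!(\Lambda)(n)$ of $\DM_{h,c}(U,\QQ)$. Everything else---the reductions between the six functors, the torsion case via rigidity, and the bookkeeping for $\otimes$ and $\sHom$---is formal given the material already in the excerpt.
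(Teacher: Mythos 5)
The paper itself only cites \cite[Cor.~6.3.15]{CD4} for this statement, indicating that the main ingredients are Theorem \ref{resolution of singularities} and absolute purity, so your overall skeleton (reduce everything to $j_*$ along open immersions; torsion coefficients via Theorem \ref{rigidity} and Gabber's finiteness theorem; $\QQ$-coefficients via $h$-local uniformization and purity) is consistent with the intended strategy, and your formal reductions for $f^!$, $\otimes$ and $\sHom$ are essentially fine (small slip: a compactification gives a proper map, not a smooth one; for $f^!$ you should factor $f$ Zariski-locally through a closed immersion into $\mathbf{A}^n_Y$, which is harmless since local constructibility is local on the source). But the heart of your $\QQ$-linear argument has a genuine gap. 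The d\'evissage you propose does not work: the generators $g_!\Lambda(n)$ of $\DM_{h,c}(U,\QQ)$ from Proposition \ref{proper devissage} are useless for controlling $j_*$, because $j_*g_!$ is neither $(jg)_!$ nor $(jg)_*$, and absolute purity only computes $i^!$ of \emph{twists of the unit}, not $i^!M$ for an arbitrary dualizable $M$; unlike locally constant torsion sheaves, a dualizable motive cannot be trivialized by an \'etale or finite cover, so ``iterated pushforwards of Tate twists along the strata of $Z$'' does not capture $j_*M$. The reduction that actually makes the cited proof run is the opposite one: use the \emph{proper} generators $p_*\Lambda(n)$ of Proposition \ref{proper devissage}; since $f_*p_*=(fp)_*$ and $f_*$ is exact, the statement ``$f_*M$ is locally constructible for all locally constructible $M$'' for all separated $f$ of finite type reduces to the unit-coefficient statement ``$f_*\Lambda(n)$ is locally constructible'' for all such $f$, and then Nagata reduces this to $j_*\Lambda(n)$ for a dense open immersion, where Theorem \ref{resolution of singularities}, absolute purity and an induction on dimension apply.

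Two further points are asserted but not available for free. First, you invoke that over $\QQ$ local constructibility is $h$-local on the base ``because it equals compactness''; compactness is not formally $h$-local, and descending constructibility along the blow-up and finite flat pieces of an $h$-covering is itself a substantive step (for finite surjective maps one uses $\QQ$-linear transfer/degree arguments to realize $M$ as a direct factor of $p_*p^*M$, and blow-ups require a separate noetherian induction); in \cite{CD4} this descent is part of the proof, not an input. Second, your reduction of general coefficients $\Lambda$ to $\Lambda=\QQ$ and $\Lambda$ finite via the conservativity family $\{-\otimes\QQ,\ -\otimes\ZZ/p\ZZ\}$ is not justified: that family detects vanishing of objects, not membership in the thick subcategory of locally constructible ones, so knowing that $\QQ\otimes f_*M$ and $\ZZ/p\ZZ\otimes f_*M$ are constructible does not by itself give constructibility of $f_*M$; an additional glueing/boundedness argument is needed here.
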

See \cite[Corollary~6.3.15]{CD4}.
\begin{thm}[Cisinski-D\'eglise]\index{locally constructible}\index{motive!locally constructible}
Let $X$ be a noetherian scheme of finite dimension, and $M$ an object
of $\DM_h(X,\Lambda)$.
\begin{enumerate}
\item If $M$ is dualizable, then it is locally constructible.
\item If there exists a closed immersion $i:Z\to X$ with open complement $j:U\to X$
such that $i^*(M)$ and $j^*(M)$ are locally constructible, then $M$ is locally constructible.
\item If $M$ is locally constructible over $X$, then there exists
a dense open immersion $j:U\to X$ such that $j^*(M)$ is dualizable in $\DMhlc(U)$.
\end{enumerate}\label{thm:generic rigidity}
\end{thm}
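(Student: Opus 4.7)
I address the three parts in the order (2), (1), (3). For (2), the Localization Theorem supplies a cofiber sequence $j_\sharp j^*M \to M \to i_*i^*M$. By hypothesis $j^*M$ and $i^*M$ are locally constructible, and by the finiteness theorem just stated the functors $j_\sharp$ and $i_*$ preserve local constructibility; both outer terms therefore lie in $\DMhlc(X,\Lambda)$. Since $\DMhlc(X,\Lambda)$ is thick (in particular closed under cofibers) inside $\DM_h(X,\Lambda)$, the middle term $M$ is locally constructible as well.

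For (1), I would reduce to the two coefficient regimes in which $\DM_h$ is well understood. Symmetric monoidal functors preserve dualizability, so both $M\otimes\QQ \in \DM_h(X,\QQ)$ and each torsion reduction $M\otimes\ZZ/p\ZZ \in \DM_h(X,\ZZ/p\ZZ)$ are dualizable. In the rational case, the remark following Definition~\ref{def:locallyconstr} identifies $\DMhlc(X,\QQ)$ with the compact objects of $\DM_h(X,\QQ)$, and dualizability in a compactly generated closed symmetric monoidal stable $\infty$-category with compact unit forces compactness; thus $M\otimes\QQ$ is locally constructible. For $p$ invertible in $\mathcal{O}_X$, Theorem~\ref{rigidity} identifies $\DM_h(X,\ZZ/p\ZZ)$ symmetric-monoidally with $D(Sh(X_{et},\ZZ/p\ZZ))$, inside which dualizable objects are locally constant with perfect fibers, hence lie in $D_{ctf}^b$; the residual primes are handled via Remark~\ref{rigidity artin}. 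Finally one reassembles these reductions into a local constructibility statement for $M$ itself, using an arithmetic fracture argument based on the detection criterion recalled in the remark on change of coefficients.

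For (3), the plan is noetherian induction on $X$ combined with a generic trivialization. Using (2) and induction on $\dim X$, it suffices to exhibit a dense open $U\subset X$ on which $j^*M$ is dualizable in $\DMhlc(U)$. Reduce first to $X$ integral; by de Jong--Gabber (Theorem~\ref{resolution of singularities}) applied to an $h$-cover of $X$, we may further shrink to a regular $X$. On a regular integral base, a locally constructible $M$ admits, after pullback by an \'etale surjection, a finite compact presentation by motives $f_\sharp\Lambda(n)$ with $f$ smooth of finite type. Shrinking $X$ once more, we may compactify each such $f$ to a smooth projective morphism (using Nagata together with generic resolution of singularities of the compactification), whence relative purity (the Corollary after Relative Purity) yields $f_\sharp\Lambda \cong f_*\Lambda(d)[2d]$, which is dualizable by the Poincar\'e duality exercise. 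The main obstacle is precisely this compact-presentation step: realizing an arbitrary locally constructible $M$, up to further generic shrinking, as built from finitely many smooth projective generators; this requires a finite generation result for $\DMhlc$ alongside a coherent genericity argument turning the relevant smooth morphisms into smooth projective ones simultaneously.
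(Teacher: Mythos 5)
Your part (2) is essentially correct: the localization cofiber sequence, the fact that $j_\sharp=j_!$ and $i_*=i_!$ preserve local constructibility (which the paper records for arbitrary noetherian schemes, so you do not even need to invoke the finiteness theorem, whose quasi-excellence hypothesis is not available under the assumptions of the statement), and thickness of $\DMhlc(X,\Lambda)$ do the job. The gaps are in (1) and (3). For (1), the reduction to $\QQ$- and $\ZZ/p\ZZ$-coefficients is the easy half; the step you call ``reassembly'' is the actual content and is missing. The criterion recalled in the paper is a conservativity statement: $M\cong 0$ if and only if $M\otimes\QQ\cong 0$ and $M\otimes\ZZ/p\ZZ\cong 0$ for all $p$. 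Local constructibility is not a vanishing condition, so it is not detected by this family: knowing that $M\otimes\QQ$ and each $M\otimes\ZZ/p\ZZ$ is locally constructible (or even dualizable) gives, a priori, \'etale trivializations and stratifications that depend on $p$, and no argument is offered that these can be made uniform, nor that the integral object is controlled by its rationalization and its reductions (an arithmetic fracture square would at best recover $M$ from completions and the rational part, not transport constructibility). This is precisely the delicate point of \cite[Theorem~6.3.26]{CD4}, which the paper cites rather than reproves, so asserting ``one reassembles'' leaves the theorem unproved at its core.

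For (3) you concede the decisive step yourself: producing, after shrinking the base, a presentation of $j^*M$ built from dualizable (smooth proper) generators. Moreover the reduction you sketch is not coherent as stated. Theorem \ref{resolution of singularities} requires quasi-excellence (not assumed here) and produces an $h$-cover of $X$ by regular schemes, involving alterations and finite surjections, not a dense open subscheme; so ``shrinking to a regular $X$'' is not what it yields, and descending dualizability from such a cover back to a dense open of $X$ is itself a nontrivial step (this is where one needs the prime-to-$\ell$ refinement, transfer/degree arguments for finite flat surjections with $\QQ$-coefficients, the rigidity theorem for torsion coefficients where generic local constancy of $D^b_{ctf}$-objects is classical, and a gluing over the primes). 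Finally, a smooth morphism cannot in general be compactified, even generically over the base, by a smooth projective one, so the ``Nagata plus generic resolution'' step does not manufacture the smooth projective generators you need; the passage from the generators $f_\sharp\Lambda(n)$, $f$ smooth, to motives of smooth proper schemes is exactly the hard dévissage. So (2) stands, while (1) and (3) each have a genuine gap, and both are the parts for which the paper defers to \cite[Theorem~6.3.26]{CD4}.
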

This is a reformulation of (part of) \cite[Theorem~6.3.26]{CD4}.
\begin{rem}
In particular, an object $M$ of $\DM_{h}(X,\Lambda)$ is constructible if and only if
there exists a finite stratification of $X$ by locally closed subschemes $X_i$
such that each restriction $M_{|X_i}$ is dualizable in $\DM_{h}(X_i,\Lambda)$.
This may be seen as an independence of $\ell$ result. Indeed,
as we will recall below, there are $\ell$-adic realization functors
and they commute with the six functors. In particular, for
each appropriate prime number $\ell$, the $\ell$-adic realization
$R_\ell(M)$ is a constructible $\ell$-adic sheaf:
each restriction $R_\ell(M)_{|X_i}$ is smooth (in the
language of SGA~4, `localement constant tordu')\footnote{It is standard terminology to call such $\ell$-adic sheaves `lisses'. This comes from Deligne's work, which is written in French. I prefer to translate into `smooth' because this is what we do for morphisms of schemes.
The reason is that this terminology comes from the fact that
there are transersality conditions one can define between (motivic or $\ell$-adic) sheaves and morphisms of schemes, and that a basic intuition about smoothness is that a smooth object is transverse to anything: indeed, a smooth sheaf is transverse to any morphism, while any sheaf is transverse to a smooth morphism. This why I think it is better to use the same word to express the smoothness of both sheaves and morphisms of schemes.}, where
the $X_i$ form a stratification of $X$ which is
given independently of $\ell$.
Furthermore, if we apply any of the six operations to $R_\ell(M)$
in the $\ell$-adic context, then we obtain an object of the from
$R_\ell(N)$ for some locally constructible motive $N$,
and thus a stratification as above relatively to $R_\ell(N)$
which does not depend on $\ell$.\label{rem independence stratifications}
\end{rem}

\subsection{Integrality of traces and rationality of $\zeta$-Functions.}

\begin{paragr}
For $x$ a dualizable object in a tensor category $C$ with
unit object $\mathbf 1$, we can from
the trace of an endomorphism. Indeed the trace\index{trace} of $f:x\to x$
is the map $Tr(f):\mathbf{1}\to\mathbf{1}$ defined
as the composite bellow.

\[
\mathbf{1}\xrightarrow{\text{unit}}
\uHom(x,x)\cong x^{\wedge}\otimes x\xrightarrow{1\otimes f}
x^{\wedge}\otimes x\xrightarrow{\text{evaluation}}\mathbf{1}\]

If a functor $\Phi:C\to D$ is symmetric monoidal, then
the induced map
\[\Phi:\Hom_C(x,x)\to\Hom_C(\mathbf{1},\mathbf{1})\]
preserves the formation
of traces: $\Phi(Tr(f))=Tr(\Phi(f))$. 
\end{paragr}
\begin{paragr}
If $M\in \DM_{h,lc}(Spec(k),\Lambda)$ for $k$ a field (see Def.~\ref{def:locallyconstr}), then $M$
is dualizable. Furthermore, the unit is $\Lambda$ and
we can compute
\[H^0\Hom_{\DM_{h,lc}(Spec(k),\Lambda)}(\Lambda,\Lambda)
=\Lambda\otimes\ZZ[1/p]\]
where $p$ is the exponent characteristic of $k$ (i.e.
$p=char(k)$ if $char(k)>0$ or $p=1$ else).
For $f:M\to M$ any map in $\DM_{h,lc}(Spec(k),\ZZ)$,
we thus have its trace
\[Tr(f)\in\ZZ[1/p]\, .\]
The \emph{Euler characteristic}\index{Euler characteristic}
of a dualizable object
$M$ of $\DM_{h}(Spec(k),\ZZ)$ is defined as the trace of its
identity:
\[\chi(M)=Tr(1_M)\, .\]
For separated $k$-scheme of finite type $X$, we define
in particular
\[\chi_c(X)=\chi(a_!\ZZ)\]
with $a:X\to Spec(k)$ the structural map.
\end{paragr}
\begin{paragr}
Let $X$ be a noetherian scheme and $\ell$ a prime number.
Let $\ZZ_{(\ell)}$ be the localization of $\ZZ$ at the prime ideal $(\ell)$.
We may identify $\DM_h(X,\QQ)$ as the full subcategory
of $\DM_h(X,\ZZ_{(\ell)})$ whose objects are the motives $M$ such that
$M/\ell M\cong 0$, where $M/\ell M\cong\ZZ/\ell\ZZ\otimes M$ is defined via the
following cofiber sequence:
\[M\xrightarrow{\ell}M\to M/\ell M\, .\]
We define
\[\hat D(X,\ZZ_\ell)=\DM_h(X,\ZZ_{(\ell)})/\DM_h(X,\QQ)\, .\]
In other words, $\hat D(X,\ZZ_\ell)$ is the localization (in the sense
of $\infty$-categories) of $\DM_h(X,\ZZ_{(\ell)})$ by the maps
$f:M\to N$ whose cofiber is uniquely $\ell$-divisible
(i.e. lies in the subcategory $\DM_h(X,\QQ)$).
One can show that, if $\frac{1}{\ell}\in\mathcal{O}_X$,
the homotopy category of $\hat D(X,\ZZ_\ell)$ is
Ekedahl's derived category of $\ell$-adic sheaves\index{l-adic@$\ell$-adic!sheaves}
on the small \'etale site of $X$.
In fact, as explained in \cite[Prop. 7.2.21]{CD4} (although in the language of
model categories), the rigidity theorem  \ref{rigidity theorem} may be interpreted
as an equivalence of $\infty$-categories of the form:
\[\hat D(X,\ZZ_\ell)\cong\lim_n D(X_{et},\ZZ/\ell^n\ZZ)\]
(here, the limit is taken in the $\infty$-categories of $\infty$-categories).
We thus have a canonical 
\emph{$\ell$-adic realization functor}\index{l-adic@$\ell$-adic!realization}
\[R_\ell:\DM_h(X,\ZZ)\to\lim_n D(X_{et},\ZZ/\ell^n\ZZ)\]
which sends a motive $M$ to $M\otimes\ZZ_{(\ell)}$, seen in the
Verdier quotient $\hat D(X,\ZZ_\ell)$.
We observe that there is a unique way to define the six operations
on $\hat D(X,\ZZ_\ell)$ in such a way that the $\ell$-adic
realization functor commutes with them. In particular,
there is a symmetric monoidal structure on $\hat D(X,\ZZ_\ell)$.

Classically, one defines $D^b_c(X_{et},\ZZ_\ell)$ as the
full subcategory of $\lim_n D(X_{et},\ZZ/\ell^n\ZZ)$
whose objects are the $\ell$-adic systems $(\mathcal{F}_n)$
such that each $\mathcal{F}_n$
belongs to the subcategory
$D^b_{ctf}(X_{et},\ZZ/\ell^n\ZZ)$ (see \ref{par:ctf}).
Furthermore, an $\ell$-adic system $(\mathcal{F}_n)$
is dualizable is and only if $\mathcal{F}_1$ is
dualizable in $D^b_{ctf}(X_{et},\ZZ/\ell\ZZ)$: this
is due to the fact, that, by definition, the canonical functor
\[\hat D(X,\ZZ_\ell)\to D(X_{et},\ZZ/\ell\ZZ)\]
is symmetric monoidal, conservative, and commutes with the formation
of internal Hom's.
In other words, $D^b_c(X_{et},\ZZ_\ell)$ may be identified with the
full subcategory of $\hat D(X,\ZZ_\ell)$ whose objects are those
$\mathcal{F}$ such that there exists a finite stratification by
locally closed subschemes $X_i\subset X$ such that each restriction
$\mathcal{F}_{|X_i}$ is dualizable in $\hat D(X_i,\ZZ_\ell)$.
We thus have a canonical equivalence of $\infty$-categories:
\[D^b_c(X_{et},\ZZ_\ell)
\cong\lim_n D^b_{ctf}(X_{et},\ZZ/\ell^n\ZZ)\, .\]
This implies right away that the six operations restrict
to $D^b_c(X_{et},\ZZ_\ell)$ (if we consider quasi-excellent
schemes only), and, by vitue of Theorem \ref{rigidity}
that we have an $\ell$-adic realization\index{l-adic@$\ell$-adic!realization}
functor
\[R_\ell:\DM_{h,lc}(X,\ZZ)\to D^b_c(X_{et},\ZZ_\ell)\]
which commute with the six operations. For a scheme $X$ with
structural map $a:X\to\spec k$ separated and of finite type,
the motive of $X$ is $M(X)=a_!a^!(\ZZ)$. This is a dualizable object
with dual $a_*(\ZZ)$. Hence
\[
R_\ell(M(X)^\wedge(n))=a_*(\ZZ_\ell(n))=R\Gamma(X_{et},\ZZ_\ell(n))
\]
is a dualizable object in $D^b_c({\spec k}_{et},\ZZ_\ell)$.
For $k$ separably closed, the latter category simply is the bounded derived
category of $\ZZ_\ell$-modules of finite type, and
this proves in particular that $\ell$-adic cohomology\index{l-adic@$\ell$-adic!cohomology}
\[
H^i_{et}(X,\ZZ_\ell(n))=H^i(R_\ell(M(X)^\wedge(n)))
\]
is of finite type as a $\ZZ_\ell$-module for all $i$ (and trivial for all but
finitely many $i$'s). Similarly, the $\ell$-adic realization of
$a_!(\ZZ)$ gives $\ell$-adic cohomology with compact
support\index{l-adic@$\ell$-adic!cohomology with compact support}
\[
H^i_{et,c}(X,\ZZ_\ell(n))=H^i(R_\ell(a_!(\ZZ)(n)))\ .
\]
\end{paragr}

\begin{paragr}
In particular,
for any field $k$ of characteristic prime to $\ell$,
we have a symmetric monoidal functor
\[R_{\ell}:DM_{h,lc}(k,\mathbf{Z})\to D_{c}^{b}(k,\mathbf{Z}_{\ell})\, \]
inducing the map of rings
\[R_\ell:\ZZ[1/p]\cong H^0\Hom_{\DM_{h,lc}(k,\ZZ)}(\ZZ,\ZZ)\to
H^0\Hom_{D^b_c(k,\ZZ_\ell)}(\ZZ_\ell,\ZZ_\ell)\cong\ZZ_\ell\, .\]
Therefore,
for an endomorphism $f:M\to M$ we have $Tr(f)\in \mathbf{Z}[1/p]$
sent to the $\ell$-adic number $Tr(R_\ell(f))\in\ZZ_\ell$. We thus get:
\end{paragr}
\begin{cor} The $\ell$-adic trace\index{independence of $\ell$}
$Tr(R_{\ell}(f))\in\mathbf{Z}[1/p]$ and is independent of $\ell$.
\end{cor}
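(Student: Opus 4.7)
The plan is to deduce everything from the fact that symmetric monoidal functors preserve traces of endomorphisms of dualizable objects, a principle already spelled out just above the statement. The first observation is that, by the paragraph preceding the corollary, any object $M$ of $\DM_{h,lc}(\mathrm{Spec}(k),\ZZ)$ is dualizable, and the endomorphism ring of the unit object is
\[H^0\Hom_{\DM_{h,lc}(\mathrm{Spec}(k),\ZZ)}(\ZZ,\ZZ)\cong\ZZ[1/p],\]
so that $Tr(f)$ \emph{a priori} lives in $\ZZ[1/p]$, a ring that carries no information about $\ell$ whatsoever.

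The second step is simply to apply the symmetric monoidal realization functor $R_\ell$ and to invoke the identity $\Phi(Tr(f))=Tr(\Phi(f))$ recalled at the very start of the subsection. This yields
\[Tr(R_\ell(f))=R_\ell(Tr(f))\]
inside $H^0\Hom_{D^b_c(k,\ZZ_\ell)}(\ZZ_\ell,\ZZ_\ell)\cong\ZZ_\ell$, where the induced ring map $R_\ell:\ZZ[1/p]\to\ZZ_\ell$ is the canonical inclusion (well-defined since $\ell\neq p$ by the standing hypothesis that $k$ has characteristic prime to $\ell$). Because this inclusion is injective, $Tr(R_\ell(f))$ lies in the subring $\ZZ[1/p]\subset\ZZ_\ell$ and is identified there with $Tr(f)$ itself.

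Both claims of the corollary now fall out immediately: membership in $\ZZ[1/p]$ follows from the displayed equality combined with the injectivity of $\ZZ[1/p]\hookrightarrow\ZZ_\ell$, and independence of $\ell$ follows because $Tr(f)$ is defined intrinsically in the $\ell$-free category $\DM_{h,lc}(\mathrm{Spec}(k),\ZZ)$ before any realization is taken. There is no genuine obstacle in this argument; the entire content is already packaged in the symmetric monoidality of $R_\ell$ established in the previous paragraph, together with the identification $\End(\mathbf{1})\cong\ZZ[1/p]$ for the unit of $\DM_{h,lc}(\mathrm{Spec}(k),\ZZ)$. The substance of the corollary is therefore that one has, at the motivic level, a single integral (up to inverting $p$) rational number refining all the $\ell$-adic traces simultaneously.
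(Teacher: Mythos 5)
Your argument is correct and is essentially the paper's own proof: one applies the symmetric monoidal realization $R_\ell$ to the dualizable object $M$, uses $\Phi(Tr(f))=Tr(\Phi(f))$, and notes that the induced ring map $\ZZ[1/p]\to\ZZ_\ell$ sends the intrinsically defined motivic trace $Tr(f)$ to $Tr(R_\ell(f))$, which therefore lies in $\ZZ[1/p]$ and does not depend on $\ell$. The only addition, the remark that $\ZZ[1/p]\hookrightarrow\ZZ_\ell$ is the canonical injective map, is a harmless elaboration of what the paper leaves implicit.
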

\begin{rem}
If $k$ is separably closed, then $D_{c}^{b}(k,\mathbf{Z}_{\ell})$
simply is the derived category of $\ZZ_\ell$-modules of finite type, and
we have
\[Tr(R_\ell(f))=\sum_i (-1)^{i}Tr(H^iR_\ell(f):H^iR_\ell(M)\to H^iR_\ell(M))\]
where each $Tr(H^iR_\ell(f))$ can be computed in the usual way in terms of
traces of matrices. If $k$ is not separably closed,
we can always choose a separable closure $\bar k$ and observe that
pulling back along the map $Spec(\bar k)\to Spec(k)$ is
a symmetric monoidal functor which commutes with the $\ell$-adic
realization functor. This can actually be used to prove
that the Euler characteristic is always an integer
(as opposed to a rational number in $\mathbf{Z}[1/p]$): if $f=1_M$ is the
identity, the trace of $R_\ell(f)$ can be computed as an alternating sum
of ranks of $\ZZ_\ell$-modules of finite type.
\end{rem}
\begin{cor}
For any dualizable object $M$ in $\DM_h(k,\ZZ)$, we have
$\chi(M)\in\ZZ$.
\end{cor}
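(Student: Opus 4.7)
The plan is to combine the previous corollary (integrality of $\ell$-adic traces) with the remark preceding it, using that the Euler characteristic of a dualizable object in the $\ell$-adic derived category over a separably closed field is manifestly an integer.

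First, I would reduce to the case where $k$ is separably closed. Choose a separable closure $\bar k$ and write $\pi:\mathrm{Spec}(\bar k)\to\mathrm{Spec}(k)$. The pullback $\pi^*:\DM_h(k,\ZZ)\to\DM_h(\bar k,\ZZ)$ is symmetric monoidal and preserves dualizability, and it sends the trace of $1_M$ to the trace of $1_{\pi^*M}$. Moreover, the induced ring map on units $H^0\Hom(\ZZ,\ZZ)\to H^0\Hom(\ZZ,\ZZ)$ fits into a commutative square with the $\ell$-adic realization, and is injective on $\ZZ[1/p]$. Hence it suffices to prove $\chi(\pi^*M)\in\ZZ$.

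Next, fix a prime $\ell\neq p$ and consider the $\ell$-adic realization
\[R_\ell:\DM_{h,lc}(\bar k,\ZZ)\to D^b_c(\bar k,\ZZ_\ell)\, .\]
Since $R_\ell$ is symmetric monoidal, $R_\ell(M)$ is dualizable, hence (over a separably closed field) a perfect complex of $\ZZ_\ell$-modules: its cohomology groups $H^iR_\ell(M)$ are finitely generated $\ZZ_\ell$-modules, vanish for all but finitely many $i$, and, being perfect, have well-defined ranks. By the standard computation recalled in the remark preceding the corollary,
\[Tr(R_\ell(1_M))=\sum_i (-1)^i\,\mathrm{rank}_{\ZZ_\ell}\bigl(H^iR_\ell(M)\bigr)\in\ZZ\, .\]

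Finally, by the previous corollary we know that $\chi(M)=Tr(1_M)\in\ZZ[1/p]$ and that its image under $R_\ell$ in $\ZZ_\ell$ equals $Tr(R_\ell(1_M))$. But the ring homomorphism $\ZZ[1/p]\hookrightarrow\ZZ_\ell$ is injective (since $p$ is invertible in $\ZZ_\ell$), and the image of $\chi(M)$ lies in the subring $\ZZ\subset\ZZ_\ell$ by the previous step. If $\chi(M)=a/p^n$ with $a\in\ZZ$, then the equality $a/p^n=m$ in $\ZZ_\ell$ for some $m\in\ZZ$ forces $a=mp^n$ in $\ZZ$, so $\chi(M)=m\in\ZZ$.

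The only subtle point is the passage from $\ZZ[1/p]$ to $\ZZ$, but once one picks $\ell\neq p$ this is elementary. The conceptual content lies entirely in the fact that $R_\ell$ preserves traces and that dualizable $\ell$-adic complexes over a separably closed field are perfect, both of which have already been set up; everything else is bookkeeping.
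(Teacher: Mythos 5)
Your proof is correct and follows the same route as the paper: pull back along $\mathrm{Spec}(\bar k)\to\mathrm{Spec}(k)$, use that the $\ell$-adic realization (for $\ell\neq p$) is symmetric monoidal so that $Tr(R_\ell(1_M))$ is an alternating sum of ranks of finitely generated $\ZZ_\ell$-modules, and conclude by injectivity of $\ZZ[1/p]\hookrightarrow\ZZ_\ell$. This is exactly the argument sketched in the remark preceding the corollary, so nothing further is needed.
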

\begin{paragr}
Let $A$ be a ring. A function $f:X\to A$ from a topological space
to a ring is constructible\index{constructible!function} if there is a finite stratification of $X$ by
locally closed $X_{i}$ such that each $f|_{X_{i}}$ is constant.
We denote by $C(X,A)$ the ring of constructible functions with values in $A$
on $X$. For a scheme $X$, we define $C(X,A)=C(|X|,A)$, where $|X|$
denotes the topological space underlying $X$.
\end{paragr}
\begin{paragr}
Recall that, for a stable $\infty$-category $C$, we have its Grothendieck group $K_0(C)$:
the free monoid generated by isomorphism classes $[x]$ of objects $x$ of $C$,
modulo the relations $[x]=[x']+[x'']$ for each cofiber sequence
$x'\to x\to x''$. In particular, we have the relations $0=[0]$ and
$[x]+[y]=[x\oplus y]$.
This monoid turns out to be an abelian group with
$-[x]=[\Sigma(x)]$.
If ever $C$ is symmetric monoidal, then $K_0(C)$
inherits a commutative ring structure with multiplication $[x][y]=[x\otimes y]$.
\end{paragr}
\begin{paragr}
We have the Euler characteristic map
$DM_{h,lc}(X,\ZZ)\stackrel{\chi}{\to}C(X,\ZZ)$.
It is defined by $\chi(M)(x)=\chi({x}^{*}M)$,
where the point $x$ is seen as a map $x:Spec(\kappa(x))\to X$.
Recall that if $M\in DM_{h}(X,\Lambda)$ is locally constructible then
there is $U\subseteq X$ open and dense such that $M|_{(X-U)_{red}}$
is locally constructible and $M|_{U}$ is dualizable. 
Therefore, by noetherian induction, we see that
$\chi(M):|X|\to \ZZ$
is a constructible function indeed.
For any cofiber sequence of dualizable objects
\[M'\to M\to M''\,  ,\]
we have
\[\chi(M)=\chi(M')+\chi(M'')\, .\]
Since $\chi(M\otimes N)=\chi(M)\chi(N)$,
we have a morphism of rings:
\[
\chi:K_{0}(DM_{h,lc}(X,\mathbf{Z}))\to C(X,\ZZ)\, ,
\]
 and we have a commutative triangle: 
$$\begin{tikzcd}[row sep=normal, column sep=tiny]
K_0(\DMhlc(X,\ZZ))\ar{rr}{R_{\ell}}\ar{dr}[swap]{\chi}&&
K_0(D^b_c(X_{et},\ZZ_\ell))\ar{dl}{\chi}\\
&C(X,\ZZ)&
\end{tikzcd}$$
\end{paragr}
\begin{paragr}
Given a stable $\infty$-category $C$, there is the full subcategory
$C_{tors}$ which consists of objects $x$ such that there exists an integer $n$ such that
$n.1_x\cong 0$. One checks that $C_{tors}$ is a thick subcategory of $C$ and one
defines the Verdier quotient $C\otimes\QQ=C/C_{tors}$. All this is a fancy way to say that
one defines $C\otimes\QQ$ as the $\infty$-category with the same set of objects as $C$,
such that $\pi_0Map_C(x,y)\otimes\QQ=\pi_0Map_{C\otimes\QQ}(x,y)$ for all $x$ and $y$.
This is how one defines $\ell$-adic sheaves:
\[D^b_c(X_{et},\QQ_\ell)=D^b_c(X_{et},\ZZ_\ell)\otimes\QQ\, .\]
When it comes to motives, we can prove that, when $X$ is noetherian of finite
dimension, the canonical functor
\[\DM_{h,lc}(X,\ZZ)\otimes\QQ\to\DM_{h,lc}(X,\QQ)\]
is fully faithful and almost an equivalence: a Morita equivalence.
Since $\DM_{h,lc}(X,\QQ)$ is idempotent complete, that means that any
$\QQ$-linear locally constructible motive is a direct factor
of a $\ZZ$-linear
one. Furthermore, one checks that $D^b_c(X_{et},\QQ_\ell)$ is
idempotent complete (because it has a bounded $t$-structure), so that we get
a $\QQ$-linear $\ell$-adic realization functor\index{l-adic@$\ell$-adic!realization}:
\[R_\ell:\DM_{h,lc}(X,\QQ)\to D^b_c(X_{et},\QQ_\ell)\]
which is completely determined by the fact that the following square
commutes.
\[\begin{tikzcd}
\DM_{h,lc}(X,\ZZ)\ar{r}{R_\ell}\ar{d}&D^b_c(X_{et},\ZZ_\ell)\ar{d}\\
\DM_{h,lc}(X,\QQ)\ar{r}{R_\ell} &D^b_c(X_{et},\QQ_\ell)
\end{tikzcd}\]
The $\QQ$-linear $\ell$-adic realization functor commutes with the six operations
if we restrict ourselves to quasi-excellent schemes over $\ZZ[1/\ell]$;
see \cite[7.2.24]{CD4}.

We may see these realization functors as a categorified version of
cycle class maps\index{cycle class!l-adic@$\ell$-adic}. Indeed, in view of the
representability results such as Theorem~\ref{cycle class}
\index{l-adic@$\ell$-adic!cycle class}, they induce the classical
cycle class maps in $\ell$-adic cohomomology: for a field $k$ and a separated
morphism of finite type $a:X\to\spec k$, we have
\[
\begin{tikzcd}
H^0(\Hom_{DM_{h}(X,\mathbf{Q})}(\mathbf{Q}(n)[2n],a^{!}\mathbf{Q}))
\ar[equals]{d}{\wr}\ar{r}{R_\ell}&
H^0(\Hom_{D^b_c(X_{et},\QQ_\ell)}(\mathbf{Q}_\ell(n)[2n],a^!\mathbf{Q}_\ell))
\ar[equals]{d}{\wr}\\
CH_{n}(X)\otimes\mathbf{Q}\ar[dotted]{r}&
H^{2n}_{c}(X_{et},\mathbf{Q}_\ell(n))^{\wedge}
\end{tikzcd}
\]
If $X$ is regular (e.g. smooth) this gives by Poincar\'e duality the
cycle class map:
\[
CH^n(X)\to H^{2n}(X_{et},\mathbf{Q}_\ell(n))\, .
\]
One can lift these cycle class maps to integral coefficients
using similar arguments from $cdh$-motives; see~\cite{CD5}.
\end{paragr}
\begin{thm}
There is a canonical exact sequence of the form:\label{virtual}
\[K_0(\DM_{h,lc}(X,\ZZ)_{tors})\to
K_0(\DM_{h,lc}(X,\ZZ))\to
K_0(\DM_{h,lc}(X,\QQ))\to 0\, .\]
\end{thm}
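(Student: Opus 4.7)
The strategy is to combine the $K$-theory localization theorem for Verdier quotients of stable $\infty$-categories with a Morita-equivalence argument identifying the cokernel.

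First, I would check that $C_{tors} := \DM_{h,lc}(X,\ZZ)_{tors}$ is a thick subcategory of $C := \DM_{h,lc}(X,\ZZ)$. Closure under shifts and retracts is immediate, and if $A \to B \to D$ is a cofiber sequence with $n\cdot 1_A = 0$ and $m\cdot 1_D = 0$, then a direct verification on the cofiber sequence shows $nm\cdot 1_B = 0$. Moreover, $C_{tors}$ is precisely the kernel on objects of the $\QQ$-linearization functor $N\mapsto N\otimes\QQ$: writing $\QQ$ as a filtered colimit of copies of $\ZZ$ with multiplication transition maps yields $N\otimes\QQ\cong \colim(N\to N\to\cdots)$, so if this colimit vanishes, compactness of $N$ in $\DM_h(X,\ZZ)$ forces $1_N$ to be killed at some finite stage, and thus $N$ is torsion. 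Consequently the factorization $C\to C/C_{tors} = C\otimes\QQ \hookrightarrow \DM_{h,lc}(X,\QQ)$ has its second map being the Morita equivalence already discussed in the text.

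Second, applying the $K$-theory localization theorem for stable $\infty$-categories (Waldhausen--Thomason--Trobaugh classically, or Blumberg--Gepner--Tabuada in the $\infty$-categorical setup) to the Verdier sequence $C_{tors}\to C\to C\otimes\QQ$ yields a cofiber sequence of connective $K$-theory spectra, whence the right-exact sequence
\[K_0(C_{tors})\to K_0(C)\to K_0(C\otimes\QQ)\to 0.\]
It only remains to identify $K_0(C\otimes\QQ)$ with $K_0(\DM_{h,lc}(X,\QQ))$.

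This last step is the main obstacle: Thomason's density theorem shows that, for a generic dense triangulated subcategory $D\subseteq\hat D$, the map $K_0(D)\to K_0(\hat D)$ need not be surjective. The extra input here is the $\QQ$-linear structure on the endomorphism rings: any idempotent $e\in\End(N\otimes\QQ) \cong \End(N)\otimes\QQ$ can be written $e = f/n$ with $f\in\End(N)$ and $n\in\ZZ$ nonzero, so that $f^2 = nf$ in $\End(N)$. One then assembles $f$ and $n-f$ into a genuine integer-entry idempotent $\phi \in \End(N^{\oplus 2})$, splits it in the idempotent-complete category $C$, and extracts a $\ZZ$-linear object whose $\QQ$-tensor has $K_0$-class equal to $[M]$. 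A cleaner alternative is to use non-connective $K$-theory (Schlichting): the Verdier sequence produces a fiber sequence of non-connective $K$-theory spectra whose $\pi_0$ on the right equals $K_0$ of the idempotent completion, namely $K_0(\DM_{h,lc}(X,\QQ))$, so that surjectivity onto $K_0(\DM_{h,lc}(X,\QQ))$ reduces to vanishing of the connecting map into $K_{-1}(C_{tors})$, which must be verified separately.
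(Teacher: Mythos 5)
There is a genuine gap, and it sits exactly at the step you flag as ``the main obstacle''. Your localization sequence ending in $K_0(C\otimes\QQ)$ is fine, but the passage from $K_0(C\otimes\QQ)$ to $K_0(\DM_{h,lc}(X,\QQ))$ (i.e.\ to $K_0$ of the idempotent completion) is not achieved by the idempotent-assembly trick you sketch, and in fact no such purely algebraic trick can exist. First, from $e=f/n$ with $e^2=e$ you only get $f^2=nf$ in $\End(N)\otimes\QQ$, not in $\End(N)$ (this can be repaired by clearing denominators), but the real problem is the next step: if one could always assemble such an $f$ into an honest integral idempotent on $N^{\oplus 2}$ and split it in $C$, the same argument would show that $K_0(A)\to K_0(A\otimes\QQ)$ is surjective for every ring $A$, applied to $C=\mathrm{Perf}(A)$. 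This is false: for $A=\ZZ\times_{\ZZ/n}\ZZ$ (pairs of integers congruent mod $n$, $n\geq 2$), $\mathrm{Spec}(A)$ is connected, so every class in $K_0(A)$ has equal ranks at the two generic points, while $A\otimes\QQ=\QQ\times\QQ$ has the class of $(1,0)$; the obstruction is a genuine nonzero class in $K_{-1}$ of the torsion subcategory. So Thomason's density theorem really does bite here, and the surjectivity you need is equivalent to the vanishing of the boundary map into $K_{-1}(\DM_{h,lc}(X,\ZZ)_{tors})$ --- which your ``cleaner alternative'' correctly identifies but then explicitly leaves unverified. That verification is the actual content of the theorem.

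The paper closes this gap as follows: a variant of the rigidity theorem (Theorem \ref{rigidity theorem}, together with Remark \ref{rigidity artin}) identifies the subcategory generated by $\DM_{h,lc}(X,\ZZ)_{tors}$ with a subcategory of torsion constructible \'etale sheaves; the standard $t$-structure there induces a bounded $t$-structure with noetherian heart on $\DM_{h,lc}(X,\ZZ)_{tors}$; and the theorem of Antieau--Gepner--Heller then gives $K_{-i}(\DM_{h,lc}(X,\ZZ)_{tors})=0$ for $i>0$, so the non-connective localization sequence terminates as claimed. Without an input of this kind (some geometric/categorical finiteness forcing negative $K$-theory of the torsion part to vanish), the right-exactness at $K_0(\DM_{h,lc}(X,\QQ))$ is simply not formal. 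A minor further slip: your identification of $C_{tors}$ as the kernel of rationalization invokes compactness of locally constructible objects in $\DM_h(X,\ZZ)$, but with integral coefficients locally constructible motives need not be compact (the paper only asserts this for $\QQ\subseteq\Lambda$); fortunately that identification is not needed for the main line of the argument, since $C\otimes\QQ$ is defined as $C/C_{tors}$ and the Morita equivalence with $\DM_{h,lc}(X,\QQ)$ is quoted from the text.
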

\begin{proof}
Let $\DM_{h}(X,\ZZ)'$ be the smallest localizing subcategory of $\DM_{h}(X,\ZZ)$
generated by $\DM_{h,lc}(X,\ZZ)_{tors}$.
We also define $D(X_{et},\ZZ)'$ as the smallest localizing subcateory of $D(X_{et},\ZZ)$
generated by objects of the form $j_!(\mathcal{F})$, where $j:U\to X$
is a dense open immersion and $\mathcal{F}$ is bounded with constructible cohomology sheaf,
such that there is a prime $p$ with the following two properties:
\begin{itemize}
\item $p.1_\mathcal{F}=0$;
\item $p$ is invertible in $\mathcal{O}_U$.
\end{itemize}
Then a variant of the rigidity theorem \ref{rigidity theorem}
(together with remark \ref{rigidity artin}) gives an equivalence of $\infty$-categories:
\[\DM_{h}(X,\ZZ)'\cong D(X_{et},\ZZ)'\, .\]
One then checks that the $t$-structure on $D(X_{et},\ZZ)'$ induces a bounded $t$-structure on
$\DM_{h,lc}(X,\ZZ)_{tors}$ (with noetherian heart, since we get a Serre subcategory of
constructible \'etale sheaves of abelian groups on $X_{et}$).
Using the basic properties of non-connective $K$-theory \cite{ncK0,ncK,ncK2}, we see that
we have an exact sequence
\[K_0(\DM_{lc}(X)_{tors})\to K_0(\DM_{lc}(X))\to K_0(\DM_{lc}(X)_\QQ)
\to K_{-1}(\DM_{lc}(X)_{tors})\, ,\]
where $\DM_{lc}(X)=\DM_{h,lc}(X,\ZZ)$ and $\DM_{lc}(X)_\QQ=\DM_{h,lc}(X,\QQ)$.
By virtue of a theorem of Antieau, Gepner and Heller \cite{tK}, the existence of
a bounded $t$-structure with noetherian heart implies that
$K_{-i}(\DM_{h,lc}(X,\ZZ)_{tors})=0$ for all $i>0$.
\end{proof}
Here is a rather concrete consequence (since $\chi(M)=0$ for $M$ in $\DM_{h,lc}(X,\ZZ)_{tors}$).
\begin{cor}
For any $M$ in $\DM_{h,lc}(X,\QQ)$, there exists $M_0$ in $\DM_{h,lc}(X,\ZZ)$, such that,
for any point $x$ in $X$, we have $\chi(x^*M)=\chi(x^*M_0)$.
\end{cor}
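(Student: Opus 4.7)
The plan is to deduce the corollary from Theorem \ref{virtual} by showing that the Euler characteristic kills torsion and then using the stable $\infty$-category fact that every $K_0$-class can be realized by an actual object via $-[x]=[\Sigma x]$.

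First, I would check that the ring homomorphism $\chi:K_0(\DMhlc(X,\ZZ))\to C(X,\ZZ)$ constructed above vanishes on the image of $K_0(\DMhlc(X,\ZZ)_{tors})$: if an object $M$ satisfies $n\cdot 1_M\simeq 0$ for some $n\geq 1$, then $n[M]=0$ in $K_0$, hence $n\cdot\chi(M)=0$ in $C(X,\ZZ)$, and torsion-freeness of $C(X,\ZZ)$ forces $\chi(M)=0$. Theorem \ref{virtual} then provides two things: a factorization $\bar\chi:K_0(\DMhlc(X,\QQ))\to C(X,\ZZ)$ compatible with the change of coefficients $\ZZ\to\QQ$, and---this is what really matters---the surjectivity of $K_0(\DMhlc(X,\ZZ))\onto K_0(\DMhlc(X,\QQ))$.

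Given $M\in\DMhlc(X,\QQ)$, I would lift $[M]$ to a class $\alpha\in K_0(\DMhlc(X,\ZZ))$, write $\alpha=[P]-[Q]$ for some $P,Q\in\DMhlc(X,\ZZ)$, and set
\[M_0:=P\oplus\Sigma Q\, ,\]
so that $[M_0]=[P]+[\Sigma Q]=\alpha$. Both the pullback $x^{*}$ and the change of coefficients $\ZZ\to\QQ$ are symmetric monoidal exact functors, hence preserve $K_0$-classes and Euler characteristics of dualizable objects. Chasing the relevant commutative square, one obtains $[x^{*}M_0\otimes\QQ]=[x^{*}M]$ in $K_0(\DMhlc(\spec{\kappa(x)},\QQ))$, whence
\[\chi(x^{*}M_0)=\chi(x^{*}M_0\otimes\QQ)=\chi(x^{*}M)\]
for every point $x$ of $X$, as required.

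No genuine obstacle is expected: once Theorem \ref{virtual} is granted, the argument is purely formal. The only minor point worth isolating is the passage from a virtual class $[P]-[Q]$ to the honest object $P\oplus\Sigma Q$, which is automatic in the stable setting and is what makes the statement an existence of a single motive $M_0$ rather than of a pair.
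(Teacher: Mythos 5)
Your strategy is the intended one (the paper only gives a hint): use the surjectivity of $K_0(\DM_{h,lc}(X,\ZZ))\to K_0(\DM_{h,lc}(X,\QQ))$ from Theorem \ref{virtual}, realize a lift $[P]-[Q]$ of $[M]$ by the honest object $M_0=P\oplus\Sigma Q$, and compare pointwise Euler characteristics through the monoidal functors $x^*$ and $-\otimes\QQ$, using that $\chi$ only depends on the $K_0$-class by additivity of traces. That part is correct and is essentially the paper's argument.

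One intermediate justification is wrong, however: it is not true in general that $n\cdot 1_M\simeq 0$ implies $n[M]=0$ in $K_0$ of a stable $\infty$-category. For instance, every object of $\DM_{h,lc}(X,\ZZ)_{tors}$ is killed by some integer, yet (as used in the proof of Theorem \ref{virtual}) this category carries a bounded $t$-structure with noetherian abelian heart, so its $K_0$ is the Grothendieck group of that heart, a torsion-free group in which classes of nonzero objects are nonzero; the same phenomenon is visible already in the bounded derived category of finite abelian groups. Whether the classes coming from torsion objects become torsion in the ambient $K_0(\DM_{h,lc}(X,\ZZ))$ would itself require an argument, and you give none. The fact you actually want --- that $\chi$ vanishes on torsion objects, which is exactly the parenthetical remark preceding the corollary in the paper --- has a direct proof that bypasses $K_0$ entirely: the trace is additive in the endomorphism variable, so if $n\cdot 1_{x^*M}\simeq 0$ then $n\,\chi(x^*M)=\mathrm{Tr}(n\cdot 1_{x^*M})=\mathrm{Tr}(0)=0$ in the torsion-free ring $\ZZ[1/p]$, whence $\chi(x^*M)=0$. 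Note also that your concluding chase $\chi(x^*M_0)=\chi(x^*M_0\otimes\QQ)=\chi(x^*M)$ uses only the surjectivity statement, the $K_0$-invariance of $\chi$ in the $\QQ$-linear category, and injectivity of $\ZZ[1/p]\to\QQ$; the torsion-vanishing step (and hence the flawed claim) is not needed for the corollary at all, though it is needed if you want the asserted factorization $\bar\chi$ through $K_0(\DM_{h,lc}(X,\QQ))$.
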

\begin{rem}
It is conjectured that there is a (nice) bounded $t$-structure on $\DM_{h,lc}(X,\QQ)$.
Since $\DM_{h,lc}(X,\ZZ)_{tors})$ has a bounded $t$-structure, this would imply the existence
of a bounded $t$-structure on $\DM_{h,lc}(X,\ZZ)$, which, in turns would imply the vanishing of
$K_{-1}(\DM_{h,lc}(X,\ZZ))$ (see \cite{tK}). Such a vanishing would mean that all Verdier
quotients of $\DM_{h,lc}(X,\ZZ)$ would be idempotent-complete
(see \cite[Remark~1 p.~103]{ncK0}). In particular,
we would have an equivalence of $\infty$-categories
$\DM_{h,lc}(X,\ZZ)\otimes\QQ=\DM_{h,lc}(X,\QQ)$. The previous proposition
is a virtual approximation of this expected equivalence.
\end{rem}
\begin{paragr}
Let $R$ be a ring and let $W(R)=1+R[[t]]$ the set of power series
with coefficients in $R$ and leading term equal to $1$.
It has an abelian group
structure defined by the multiplication of power series. And it has a unique
multiplication $\ast$ such that $(1+at)\ast(1+bt)=1+abt$, turning $W(R)$ into a
commutative ring: the ring of Witt vectors.
We also have the subset $W(R)_{rat}\subseteq W(R)$
of rational functions, which one can prove to be a subring.
Given a (stable) $\infty$-category $C$, we define
\[
C^{\mathbf{N}}=\{\text{objects of \ensuremath{C} equipped with an endomorphism}\}\, .
\]
This is again a stable $\infty$-category. For $C=Perf(R)$ the $\infty$-category
of perfect complexes on the ring $R$,
we have an exact sequence 
\[
0\to K_{0}(Perf(R))\to K_{0}(Perf(R)^{\mathbf{N}}){\to}W(R)_{rat}\to 0
\]
where the first map sends a perfect complex of $R$-modules $M$ to the class of $M$ equipped
with the zero map $0:M\to M$, while the second maps sends $f:M\to M$ to $\det(1-tf)$
(it is sufficient to check that these maps are well defined when $M$ is a projective
module of finite type, since these generate the $K$-groups); see \cite{kendo}.
The first map identifies $K_0(R)$ with an ideal of $K_{0}(Perf(R)^{\mathbf{N}})$ so that
we really get an isomorphism of commutative rings:
\[K_{0}(Perf(R)^{\mathbf{N}})/K_{0}(Perf(R))\cong W(R)_{rat}\, .\]
\end{paragr}
\begin{paragr}
Let $k$ be a field with a given algebraic closure $\bar k$, as well as prime
number $\ell$ which is distinct from the characteristic of $k$.
We observe that $D^b_c(\bar k,\QQ_\ell)$ simply is the bounded derived category of
complexes of finite dimensional $\QQ_\ell$-vector spaces. We thus have a
symmetric monoidal realization functor
\[\DM_{h,lc}(k,\mathbf{Q})\to D^b_c(k,\QQ_\ell)\to D^b_c(\bar k,\QQ_\ell)\cong Perf(\QQ_\ell)\, .\]
This induces a functor
\[DM_{h,lc}(k,\mathbf{Q})^{\mathbf{N}}\to Perf(\mathbf{Q_{\ell}})^{\mathbf{N}}\, ,\]
and thus a map 
\[
K_{0}(DM_{h,lc}(k,\mathbf{Q})^{\mathbf{N}})\to K_{0}(Perf(\mathbf{Q}_{\ell})^{\mathbf{N}})
\]
inducing a ring homomorphism, the $\ell$-adic
Zeta function\index{Zeta function!$\ell$-adic}\index{l-adic@$\ell$-adic!Zeta function}
\[
Z_\ell:{K_{0}(DM_{h,lc}(k,\mathbf{Q})^{\mathbf{N}})}/{K_{0}(DM_{h,lc}(k,\mathbf{Q}))}
\to W(\mathbf{Q}_{\ell})_{rat}\subseteq1+\mathbf{Q}_{_{\ell}}[[t]]\, .
\]
On the other hand, for an endomorphism $f:M\to M$ in $\DM_{h,lc}(X,\QQ)$, one 
defines its motivic Zeta function\index{Zeta function!motivic} as follows
\[
Z(M,f)=\exp\Big(\sum_{n\ge1}Tr(f^{n})\frac{\, t^{n}}{n}\Big)\in 1+\QQ[[t]]\, .
\]
Basic linear algebra show that $Z(M,f)=Z_\ell(M,f)$ (see \cite{kendo}).
In particular, we
see that the $\ell$-adic Zeta function $Z_\ell(M,f)$ has rational coefficients
and is independent of $\ell$\index{independence of $\ell$},
while the motivic Zeta function $Z(M,f)$ is rational. In other words, we get a morphism
of rings
\[
Z:K_{0}(DM_{h,lc}(X,\mathbf{Q})^{\mathbf{N}})/K_{0}(DM_{h,lc}(X,\mathbf{Q}))
\to W(\mathbf{Q})_{rat}\subset W(\mathbf{Q})\, .
\]
Concretely, if there is a cofiber sequence of motivic sheaves equipped with endomorphisms
in the stable $\infty$-category $DM_{h,lc}(X,\mathbf{Q})^{\mathbf{N}}$
\[(M',f')\to (M,f)\to (M'',f'')\, ,\]
then
\[Z(M,f)(t)=Z(M',f')(t)\cdot Z(M'',f'')(t)\]
holds in $\mathbf{Q}[[t]]$. And for two motivic sheaves equipped with endomorphisms
$(M,f)$ and $(M',f')$ in $DM_{h,lc}(X,\mathbf{Q})^{\mathbf{N}}$, there is
\[Z(M\otimes M',f\otimes f')=Z(M,f)\ast Z(M',f')\]
where $\ast$ denotes the multiplication in the big ring of Witt vectors $W(\mathbf{Q})$.
\end{paragr}
\begin{paragr}
Take $k=\mathbf{F}_{q}$ a finite field and let
$M_{0}\in DM_{h,lc}(k,\mathbf{Q})$, with $M=p^{*}M_{0}$, $p:spec(\bar{k})\to spec(k)$. 
Let $F:M\to M$ be the induced Frobenius.
We define the \emph{Riemann-Weil Zeta function}\index{Zeta function!Riemann-Weil}
of $M_0$ as:
\[
\zeta(M_{0},s)=Z(M,F)(t),\quad t=p^{-s}.
\]
The fact that the assignment $Z(-,F)$ defines a morphism of rings
with values in $W(\QQ)$ can be used to compute explicitely the Zeta function
of many basic schemes such as $\mathbf{P}^n$ of $(\mathbf{G}_m)^r$;
see \cite[Remark 2.2]{Ramachandran} for instance.
\end{paragr}

\subsection{Grothendieck-Verdier duality}
\index{duality!Grothendieck-Verdier}\index{Grothendieck-Verdier duality}

\begin{paragr}
Take $S$ be a quasi-excellent regular scheme.
We choose a $\otimes$-invertible object $I_{S}$ in $DM_{h}(S,\Lambda)$
(e.g. $I_S=\ZZ(d)[2d]$, where $d$ is the Krull dimension of $S$).
For $a:X\to S$ separated of finite type,
we define $I_{X}=a^{!}I_{S}$. 

Define $\mathbf{D}_{X}:DM_{h}(X,\Lambda)^{\mathit{op}}\to DM_{h}(X,\Lambda)$
by 
\[
\mathbf{D}_{X}(M)=\uHom(M,I_{X}).
\]
We will sometimes write $\mathbf{D}(M)=\mathbf{D}_X(M)$.
\end{paragr}
\begin{thm}
For $M$ a locally constructible motivic sheaf over $X$,
the canonical map $M\to\mathbf{D}_{X}\mathbf{D}_{X}(M)$
is an equivalence. 
\end{thm}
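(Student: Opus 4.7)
The plan is a noetherian dévissage, reducing the claim to biduality for a dualizable motive over a regular base, where it follows from Absolute Purity. First, I would observe that the biduality map is compatible with étale pullback. Indeed, the internal-Hom identity $f^! \uHom(M, N) \cong \uHom(f^* M, f^! N)$ from the exercises, specialized at $N = I_X$ and using $f^! I_X = I_Y$, yields $f^! \mathbf{D}_X \cong \mathbf{D}_Y f^*$ for any morphism $f$; for an étale $u$, Relative Purity gives $u^! = u^*$, so $u^* \mathbf{D}_X \cong \mathbf{D}_Y u^*$. Since $\DMh$ satisfies étale descent, $\DMhlc$ is defined étale-locally, and the biduality map is natural with respect to $u^*$, the question is étale-local on $X$, and I may assume $M$ is constructible.

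Next, I would perform noetherian induction on $\dim X$. By Theorem \ref{thm:generic rigidity}(3) there is a dense open $j: U \to X$ with $j^* M$ dualizable; let $i: Z \to X$ be the complementary closed immersion, with $\dim Z < \dim X$. The exact functor $\mathbf{D}_X \mathbf{D}_X$ sends the localization cofiber sequence $j_! j^* M \to M \to i_* i^* M$ to another cofiber sequence and produces a morphism of cofiber sequences. The formula $\mathbf{D}_X f_! \cong f_* \mathbf{D}_Y$, a direct consequence of the exercise identity $\uHom(f_! A, B) \cong f_* \uHom(A, f^! B)$ at $B = I_X$, specializes to $\mathbf{D}_X i_* \cong i_* \mathbf{D}_Z$ (as $i_! = i_*$ since $i$ is proper). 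Combined with the inductive hypothesis on $Z$, this provides biduality for $i_* i^* M$. Hence it suffices to treat $M = j_! N$ with $N$ dualizable on $U$.

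For this remaining case, I would further reduce, using Theorem \ref{resolution of singularities} together with $h$-descent (which holds for $\DMh$ by construction), to the case where $U$ is regular. On a regular $U$, the dualizing object $I_U = a_U^! I_S$ is $\otimes$-invertible Nisnevich-locally: any Nisnevich-local factorization of $a_U$ as a regular closed immersion into a smooth $S$-scheme yields, via Absolute Purity for the immersion and Relative Purity for the smooth factor, a local identification $I_U \cong \Lambda_U(c)[2c]$. For dualizable $N$ on such a $U$, $\mathbf{D}_U N \cong N^\vee \otimes I_U$ is again dualizable, and $\mathbf{D}_U \mathbf{D}_U N \cong N \otimes I_U^\vee \otimes I_U \cong N$ is verified Nisnevich-locally and globalized by descent. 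Biduality for $j_! N$ then follows from the two Verdier exchange formulas $\mathbf{D}_X j_! \cong j_* \mathbf{D}_U$ (as above) and $\mathbf{D}_X j_* \cong j_! \mathbf{D}_U$ on already-reflexive dualizable motives.

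The main obstacle is the second Verdier exchange $\mathbf{D}_X j_* \cong j_! \mathbf{D}_U$: unlike the first, it is not immediate from the six-functor adjunctions alone and is intertwined with the biduality being proved. The standard way around it is to introduce, for each $X$ of finite type over $S$, the full subcategory $\mathcal{E}_X \subset \DMhlc(X,\Lambda)$ of reflexive motives, and to verify by simultaneous induction that $\mathcal{E}$ is preserved by $i_*$, $j_!$, $j^*$, étale pullback, and tensor product with locally $\otimes$-invertible objects, with the base case being the regular-open case treated above. All the geometric inputs — Absolute Purity, Relative Purity, and de Jong-Gabber's theorem — are available from the excerpt, so the remaining difficulty is purely one of bookkeeping within the six-functor formalism.
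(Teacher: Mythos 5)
The step you dismiss as ``purely one of bookkeeping'' is in fact the whole content of the theorem, and your simultaneous induction does not supply it. You correctly identify that the exchange $\mathbf{D}_X j_*\cong j_!\mathbf{D}_U$ is not formal; but then note that for $N$ dualizable and reflexive on $U$ one has, tautologically, $\mathbf{D}_X(j_!N)\cong j_*\mathbf{D}_U N$, so biduality of $j_!N$ is \emph{literally equivalent} to the identification $\mathbf{D}_X(j_*P)\cong j_!\mathbf{D}_U P$ for $P=\mathbf{D}_U N$. Hence listing ``$\mathcal{E}$ is preserved by $j_!$'' among the closure properties to be verified in your induction is circular: it restates the case you are trying to prove and cannot be extracted from the six-functor formalism plus the regular base case. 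Concretely, the canonical map $j_!\mathbf{D}_U P\to\mathbf{D}_X(j_*P)$ is an equivalence after $j^*$, and what must be shown is the vanishing of $i^*\uHom(j_*P,I_X)$ along the boundary $Z$; there is no formula for $i^*$ of an internal Hom of a $j_*$ (only $i^!$ behaves well), and the regular case on $U$ says nothing about the behaviour of $\uHom(j_*P,I_X)$ near $Z$ --- this is exactly where genuine geometric input has to enter. A second gap is your reduction ``to $U$ regular using Theorem \ref{resolution of singularities} together with $h$-descent'': $h$-coverings contain proper surjections $p$, categorical descent for $\DMh$ is along $p^*$, and neither the dualizing object nor the biduality map is compatible with $p^*$ for such $p$ (only with $p^!$, or with $p^*$ for $p$ \'etale or smooth up to twist), so biduality upstairs does not descend to a fixed object downstairs.

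This is precisely why the paper takes a different d\'evissage. By Proposition \ref{proper devissage}, constructible motives are generated (under shifts, twists, finite colimits and retracts) by $f_*\Lambda(n)$ with $f$ proper; for proper $f$ both exchanges are tautological since $f_!\simeq f_*$, so $\mathbf{D}_X\mathbf{D}_X(f_*\Lambda)\cong f_*\uHom(I_Y,I_Y)$ and the whole theorem reduces to one statement: the unit $\Lambda\to\uHom(I_X,I_X)$ is invertible for every separated finite-type $X$ over $S$. That statement is then proved by splitting into finite coefficients (Gabber's duality via the rigidity theorem \ref{rigidity theorem}) and $\QQ$-coefficients, where one shows that both sides, viewed as presheaves in $X$ via $f^*$ resp.\ $f^!$ (a nontrivial homotopy-coherence point), are $h$-sheaves of complexes, and only then uses de Jong--Gabber plus absolute and relative purity to check invertibility on regular affine schemes. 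Your computation in the regular dualizable case is correct and coincides with this last local step, but without the proper d\'evissage and the $h$-sheaf argument it does not propagate to $j_!$ of a dualizable motive, which is where your proposal stalls.
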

There is a proof in the literature under the additional
assumption that $S$ is of finite type over
an excellent scheme of dimension $\le2$ (see \cite{CD3,CD4}). But there is
in fact a proof which avoids this extra hypothesis using higher categories.
Here is a sketch.
\begin{proof}
The formation of the Verdier dual is compatible with pulling back along
an \'etale map. We may thus assume that $M$ is constructible.
The full subcategory of those $M$'s such that the biduality map of the theorem
is invertible is thick. Therefore, we may assume that $M=M(U)$ for some
smooth $X$-scheme $U$. In particular, we may assume that $M=
\Lambda\otimes\Sigma^\infty\ZZ(U)$. It is thus sufficient to prove the
case where $\Lambda=\ZZ$. By standard arguments, we see that is is sufficient
to prove the case where $\Lambda$ is finite or $\Lambda=\QQ$.
Such duality theorem is a result of Gabber~\cite{ILO}
for the derived category of
sheaves on the small \'etale site of $X$ with coefficients in $\Lambda$
of positive characteristic with $n$
invertible in $\mathcal{O}_X$.
By Theorem \ref{rigidity theorem}
and Remark \ref{rigidity artin},
this settles the case where $\Lambda$ is finite.
It remains to prove the case where $\Lambda=\QQ$. We will first prove
the following statement.
For each separated morphism of finite type $a:X\to S$, and each integer $n$,
the natural map
\[\Hom_{\DM_h(X,\QQ)}(\QQ,\QQ(n))\to\Hom_{\DM_h(X,\QQ)}(I_X,I_X(n))\]
is invertible in $D(\QQ)$ (this is the map obtained by applying the global section
functor $\Hom(\QQ,-)$ to the unit map $\QQ\to \uHom(I_X,I_X)$).
We observe that we may see this map as a morphism of presheaves of
complexes of $\QQ$-vector spaces
\[E\to F\]
where $E(X)=\Hom_{\DM_h(X,\QQ)}(\QQ,\QQ(n))$ and $F(X)=\Hom_{\DM_h(X,\QQ)}(I_X,I_X(n))$.
For a morphism of $S$-schemes $f:X\to Y$, the induced map $E(Y)\to E(X)$ is induced by the
functor $f^*$, while the induced map $F(Y)\to F(X)$ is induced by the functor $f^!$
(and the fact that $f^!(I_Y)\cong I_X$).\footnote{This is where $\infty$-category theory
appears seriously: proving that the construction $f\mapsto f^!$ actually defines
a presheaf is a highly non-trivial homotopy coherence proplem. Such construction
is explained in \cite[Chapter 10]{robalo00},
using the general results of \cite{lw1,lw2}.}
Now, we observe that both $E$ and $F$ are in fact $h$-sheaves of complexes of $\QQ$-vector
spaces. Indeed, using \cite[Proposition 3.3.4]{CD3}, we see that $E$ and $F$ satisfy Nisnevich
excision and thus are Nisnevich sheaves. On the other hand,
one can also characterise $h$-descent for $\QQ$-linear Nisnevich sheaves
by suitable excision properties \cite[Theorem 3.3.24]{CD3}.
Such properties for $E$ and $F$ follow right away from
\cite[Theorem 14.3.7 and Remark 14.3.38]{CD3}, which
proves the property of $h$-descent for $E$ and $F$.
By virtue of Theorem \ref{resolution of singularities}, it is sufficient to prove that
$E(X)\cong F(X)$ for $X$ regular and affine. In particular, $a:X\to S$ factors
through a closed immersion $i:X\to \mathbf{A}^n\times S$. By relative purity,
we have
\[I_{\mathbf{A}^n\times S}\cong p^*(I_S)(n)[2n]\]
and thus $I_{\mathbf{A}^n\times S}$ is $\otimes$-invertible (where $p:\mathbf{A}^n\times S\to S$
is the second projection). This implies that
\[I_X\cong i^!(I_{\mathbf{A}^n\times S})\cong i^!(\QQ)\otimes i^*(I_{\mathbf{A}^n\times S})\]
(Hint: use the fact that $i^!\uHom(A,B)\cong\uHom(i^*A,i^!B)$). By Absolute Purity, we have
$i^!\QQ\cong\QQ(-c)[-2c]$, where $c$ is the codimenion of $i$.
In particular, the object $I_X$ is $\otimes$-invertible, and thus the unit map
$\QQ\to\uHom(I_X,I_X)$ is invertible. This implies that the map $E(X)\to F(X)$ is invertible
as well.

We will now prove that the unit map
\[\QQ\to\uHom(I_X,I_X)\]
is invertible in $\DM_h(X,\QQ)$ for any separated $S$-scheme of finite type $X$.
Equivalently, we have to prove that, for any smooth $X$-scheme $U$ and
any integer $n$, the induced map
\[\Hom_{\DM_h(X,\QQ)}(M(U),\QQ(n))\to\Hom_{\DM_h(X,\QQ)}(M(U),\uHom(I_X,I_X)(n))\]
is invertible in $D(\QQ)$. 
But we have
\[\Hom(f_\sharp\QQ,\QQ(n))\cong\Hom(\QQ,\QQ(n))=E(U)\]
with a smooth structural map $f:U\to X$, and
\begin{align*}
\Hom(f_\sharp\QQ,\uHom(I_X,I_X)(n))
&\cong\Hom(\QQ,f^*\uHom(I_X,I_X)(n))\\
&\cong\Hom(\QQ,\uHom(f^*I_X,f^*I_X)(n))\\
&\cong\Hom(\QQ,\uHom(f^!I_X(-d)[-2d],f^!I_X(-d)[-2d])(n))\\
&\cong\Hom(\QQ,\uHom(f^!I_X,f^!I_X)(n))\\
&\cong\Hom(\QQ,\uHom(I_U,I_U)(n))=F(U)\, .
\end{align*}
In other words, we just have to check that the map $E(U)\to F(U)$ is invertible,
which we already know.

Finally, we can prove that the canonical map $M\to\mathbf{D}_{X}\mathbf{D}_{X}(M)$
is invertible. As already explained at the beginning of the proof, it is sufficient
to prove this when $M$ is constructible. By virtue of Proposition \ref{proper devissage},
it is sufficient to prove the case where $M=f_*(\QQ)$, for $f:Y\to X$
a proper map. We have:
\begin{align*}
\mathbf{D}_Xf_*\QQ
&=\uHom(f_*\QQ,I_X)\\
&\cong f_*\uHom(\QQ,f^!I_X)\\
&\cong f_*f^!I_X\\
&\cong f_*I_Y\, .
\end{align*}
Therefore, we have
\begin{align*}
\mathbf{D}_X\mathbf{D}_X(M)
&\cong\mathbf{D}_X f_*I_Y\\
&\cong\uHom(f_*I_Y,I_X)\\
&\cong f_*\uHom(I_Y,f^!I_X)\\
&\cong f_*\uHom(I_Y,I_Y)\\
&\cong f_*\QQ=M\, ,
\end{align*}
and this ends the proof.
\end{proof}

\begin{cor}
For locally constructible motives and $f$
a morphism between separated $S$-schemes of finite type,
we have:
\begin{align*}
\mathbf{D}f_{*}&\cong f_{!}\mathbf{D}\\
\mathbf{D}f_{!}&\cong f_{*}\mathbf{D}\\
\mathbf{D}f^{!}&\cong f^{*}\mathbf{D}\\
\mathbf{D}f^{*}&\cong f^{!}\mathbf{D}\, .
\end{align*}
\end{cor}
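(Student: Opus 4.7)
The plan is to reduce all four isomorphisms to the internal-hom identities of the preceding Exercise, together with the biduality theorem just proved and the finiteness theorem which guarantees that the six operations preserve local constructibility on quasi-excellent schemes. The first observation is that, for any separated morphism of finite type $f:X\to Y$ between quasi-excellent $S$-schemes, the dualizing objects are related by $I_X\cong f^!I_Y$, since the structural map of $X$ factors as $a_X=a_Y\circ f$ and hence $I_X=(a_Y f)^!I_S\cong f^!a_Y^!I_S=f^!I_Y$.

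Two of the four isomorphisms then drop out directly. Specializing the identity $\uHom(f_!M,N)\cong f_*\uHom(M,f^!N)$ from the Exercise to $N=I_Y$ gives
\[
\mathbf{D}_Y f_! M=\uHom(f_!M,I_Y)\cong f_*\uHom(M,f^!I_Y)\cong f_*\uHom(M,I_X)=f_*\mathbf{D}_X M\, ,
\]
which is the second formula. Similarly, specializing $f^!\uHom(M,N)\cong \uHom(f^*M,f^!N)$ to $N=I_Y$ yields $f^!\mathbf{D}_Y M\cong \mathbf{D}_X f^*M$, which is the fourth formula. Note that these two rely only on adjunction and not on any local constructibility assumption.

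For the remaining two formulas I would invoke biduality. Restricted to locally constructible motives, the functor $\mathbf{D}$ is its own inverse by the preceding theorem, and the Cisinski--D\'eglise finiteness theorem ensures that $f^*$, $f_*$, $f^!$, $f_!$ all preserve local constructibility. Hence, applying $\mathbf{D}_Y$ to the already-established isomorphism $\mathbf{D}_Y f_!\cong f_*\mathbf{D}_X$ evaluated at $\mathbf{D}_X M$, and simplifying using $\mathbf{D}\mathbf{D}\cong\mathrm{id}$ on both sides, yields
\[
f_!\mathbf{D}_X M\cong \mathbf{D}_Y\mathbf{D}_Y f_!\mathbf{D}_X M\cong \mathbf{D}_Y f_*\mathbf{D}_X\mathbf{D}_X M\cong \mathbf{D}_Y f_* M\, ,
\]
which is the first formula. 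The third formula is obtained in exactly the same way by dualizing the fourth.

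The only obstacle is bookkeeping: one must check at each step that the motives involved remain locally constructible so that the biduality isomorphism may be inserted, but this is precisely what the finiteness theorem guarantees. No further geometric input is needed beyond the biduality theorem established above.
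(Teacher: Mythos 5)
Your proposal is correct and follows exactly the paper's intended argument: the paper's parenthetical proof says to establish the second and fourth isomorphisms tautologically (i.e.\ from the adjunction identities $\uHom(f_!M,N)\cong f_*\uHom(M,f^!N)$ and $f^!\uHom(M,N)\cong\uHom(f^*M,f^!N)$ with $N=I_Y$ and $f^!I_Y\cong I_X$), and then deduce the other two using that $\mathbf{D}$ is an involution on locally constructible motives, which is precisely what you do, including the correct appeal to the finiteness theorem to keep everything locally constructible.
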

(The proof is by showing tautologically the second one and the fourth one, and then deduce the
other two using that $\mathbf{D}$ is an involution.)

\begin{prop}
For any $M$ and $N$ in $\DM_h(X,\Lambda)$,
if $N$ is locally constructible, then\label{internal Hom and duality}
\[
\mathbf{D}(M\otimes\mathbf{D}N)\cong \uHom(M,N)\, .
\]
\end{prop}
\begin{proof}
We construct a canonical comparison morphism:
\[
\uHom(M,N)\to \mathbf{D}(M\otimes\mathbf{D}N)\, .
\]
By transposition, it corresponds to a map
\[M\otimes\uHom(M,N)\otimes\mathbf{D}(N)\to I_X\, .\]
Such a map is induced by the evaluation maps
\[M\otimes\uHom(M,N)\to N\quad\text{and}\quad N\otimes\mathbf{D}(N)\to I_X\, .\]
For $N$ fixed, the class of $M$'s such that this map is invertible is closed under
colimits. Therefore, we reduce the question to the case where
$M=f_{\sharp}\Lambda$ for $f:X\to S$ a smooth map of dimension $d$. In that case,
we have
\[
\uHom(M,N)\cong f_{*}f^{*}(N),
\]
while
\begin{align*}
\mathbf{D}(M\otimes\mathbf{D} N)
&\cong \mathbf{D}(f_! f^*(\Lambda(-d)[-2d])\otimes\mathbf{D} N)\\
&\cong \mathbf{D}(f_! f^*(\Lambda)\otimes\mathbf{D} N)(d)[2d]\\
&\cong \mathbf{D}(f_!f^*(\mathbf{D} N))(d)[2d]\\
&\cong f_* f^!(\mathbf{D}\mathbf{D}N)(d)[2d]\\
&\cong f_* f^* N\, ,
\end{align*}
which ends the proof.
\end{proof}
\begin{cor}
For $M$ and $N$ locally constructible on $X$, we have:
\[ M\otimes N\cong\mathbf{D}\uHom(M,\mathbf{D}N)   \, .\]
\end{cor}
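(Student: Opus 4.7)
The plan is to obtain the desired formula by applying Proposition \ref{internal Hom and duality} to a cleverly chosen pair of objects and then invoking biduality twice. Both steps rely crucially on the fact that the subcategory of locally constructible motives is closed under the six operations (in particular, under $\otimes$, internal $\sHom$, and the duality functor $\mathbf{D}$), which was recorded earlier in the excerpt.

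First, I would verify the constructibility hypotheses. Since $N$ is locally constructible over the quasi-excellent scheme $X$, so is $\mathbf{D}N = \uHom(N,I_X)$: the finiteness theorem guarantees that internal Hom of two locally constructible objects is locally constructible (and $I_X = a^!I_S$ is locally constructible by the stability of local constructibility under $a^!$). Similarly, $M\otimes N$ is locally constructible, so that the biduality theorem applies to it, giving $M\otimes N\cong \mathbf{D}\mathbf{D}(M\otimes N)$.

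Next, I would apply Proposition \ref{internal Hom and duality} with $N$ replaced by $\mathbf{D}N$ (which is locally constructible by the previous paragraph). This yields
\[
\mathbf{D}(M\otimes\mathbf{D}\mathbf{D}N)\cong \uHom(M,\mathbf{D}N)\, .
\]
Using biduality $\mathbf{D}\mathbf{D}N\cong N$ on the left, this rewrites as
\[
\mathbf{D}(M\otimes N)\cong \uHom(M,\mathbf{D}N)\, .
\]
Applying $\mathbf{D}$ to both sides and invoking biduality on $M\otimes N$ then gives
\[
M\otimes N\cong \mathbf{D}\mathbf{D}(M\otimes N)\cong \mathbf{D}\uHom(M,\mathbf{D}N)\, ,
\]
which is the desired formula.

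There is no serious obstacle: the only subtlety is ensuring that all objects to which we apply biduality are locally constructible, which is precisely what the stability of $\DMhlc$ under the six operations delivers. The argument is essentially a formal manipulation once Proposition \ref{internal Hom and duality} and the biduality theorem are in hand, so the proof should be presentable in a few lines.
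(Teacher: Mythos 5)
Your argument is correct and is exactly the intended one: the corollary follows from Proposition \ref{internal Hom and duality} applied with $\mathbf{D}N$ in place of $N$, combined with the biduality theorem (using that $\mathbf{D}N$ and $M\otimes N$ remain locally constructible by the stability of $\DMhlc$ under the six operations). No issues to report.
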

\subsection{Generic base change: a motivic variation on Deligne's proof}
\begin{paragr}
The following statement,
is a motivic analogue of Deligne's generic base change theorem
for torsion \'etale sheaves \cite[Th.~Finitude, 1.9]{SGA4demi}.
The proof follows essentially the same pattern as Deligne's original argument,
except that locally constant sheaves are replaced by dualizable
objects, as we will explain below. We will write
$\DM_h(X)=\DM_h(X,\Lambda)$ for some fixed choice of coefficient ring $\Lambda$.
\end{paragr}
\begin{thm}[Motivic generic base change formula]\index{base change formula!generic}
Let $f:X\to Y$ be a morphism
between separated schemes of finite type over a noetherian base scheme $S$.
Let $M$ be a locally constructible $h$-motive on $X$. Then there
is a dense subscheme $U\subset S$ such that the formation of $f_{*}(M)$
is compatible with any base-change which factors through $U$.
Namely, for each $w:S'\to S$ factoring through $U$ we have 
\[
v^{*}f_{*}M\cong f'_{*}u^{*}M
\]

where 
\[
\begin{tikzcd}
X'\ar{r}{u}\ar{d}{f'} & X\ar{d}{f}\\
Y'\ar{r}{v}\ar{d} & Y\ar{d}\\
S'\ar{r}{w} & S
\end{tikzcd}
\]

is the associated pull-back diagram. \label{thm:gbc}
\end{thm}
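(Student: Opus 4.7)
The plan is to follow the strategy of Deligne's original argument for torsion \'etale sheaves, with dualizable locally constructible $h$-motives playing the role of locally constant constructible \'etale sheaves. First, by Noetherian induction on $|S|$, it suffices to exhibit a single non-empty open $U \subset S$ with the required property: one may then iterate the statement on $S\setminus U$ (with its reduced structure) and the pulled-back data, which terminates since $S$ is Noetherian. So we look for a single dense open $U$.

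Next I would make preliminary reductions. Shrink $S$ so that it is integral and regular (hence quasi-excellent), in order that Grothendieck-Verdier duality of Section 2.3 applies over $S$. By Nagata compactification, factor $f = p \circ j$ with $p: \bar X \to Y$ proper and $j: X \hookrightarrow \bar X$ an open immersion; proper base change gives that $p_*$ commutes with arbitrary base change, so we reduce to the case $f = j$. By Theorem \ref{thm:generic rigidity}(3), there is a dense open $k: W \hookrightarrow X$ such that $M|_W$ is dualizable in $\DMhlc(W)$. Using the cofiber sequence
\[
k_! k^* M \to M \to i_* i^* M,
\]
with $i:Z\hookrightarrow X$ the closed complement of $k$, and Noetherian induction on $\dim X$, we reduce to the case where $M$ is dualizable on $X$.

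With $M$ dualizable on $X$ and $S$ regular, Verdier duality of Section 2.3 gives
\[
j_* M \cong \mathbf{D}_{\bar X}\bigl(j_!\, \mathbf{D}_X M\bigr),
\]
since $\mathbf{D}$ is involutive on locally constructible motives and exchanges $j_*$ with $j_!$. The functor $j_!$ commutes with any base change, so the problem reduces to checking that Verdier duality commutes with pullback along $v: S' \to S$ applied to $N = j_!\mathbf{D}_X M$. By Proposition \ref{internal Hom and duality}, this in turn comes down to the compatibility $u^* I_{\bar X} \cong I_{\bar X'}$, where $u: \bar X' \to \bar X$ is the pullback of $v$. Using generic smoothness together with Theorem \ref{resolution of singularities} of de Jong-Gabber, one shrinks $S$ further so that the structural map $a: \bar X \to S$ becomes smooth, possibly after an $h$-covering of $S$; since the conclusion is $h$-local on $S$ and locally constructible motives are $h$-local by Definition \ref{def:locallyconstr}, this is permissible. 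Relative purity then identifies $I_{\bar X} \cong a^* I_S(d)[2d]$ with $d$ the relative dimension, and the desired isomorphism $u^* I_{\bar X} \cong I_{\bar X'}$ follows from smooth base change applied to $a^* I_S$.

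The main obstacle will be the last step: in general the comparison map $v^* \mathbf{D}_{\bar X}(N) \to \mathbf{D}_{\bar X'}(u^* N)$ is very far from invertible, so one must genuinely exploit both the dualizability of $M$ and the geometric simplifications (smoothness of $a$ after shrinking $S$). Keeping track that the successive shrinkings of $S$ in the preliminary reductions remain compatible with the final smoothness reduction, and handling the $h$-local character of the de Jong-Gabber step, is where the technical care lies — indeed it is presumably the locus of the gap acknowledged in the introduction.
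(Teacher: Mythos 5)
There is a genuine gap, and it sits exactly where you park it: the ``main obstacle'' you name at the end is the whole theorem, and your proposal offers no mechanism to overcome it. Reducing to $f=j$ an open immersion and writing $j_*M\cong\mathbf{D}(j_!\mathbf{D}M)$ only trades the problem of commuting $j_*$ with a non-smooth base change $u^*$ for the problem of commuting $\mathbf{D}_{\bar X}=\sHom(-,I_{\bar X})$ with $u^*$; since $j_!\mathbf{D}M$ is not dualizable and $u$ is arbitrary, the comparison map $u^*\sHom(N,I_{\bar X})\to\sHom(u^*N,u^*I_{\bar X})$ has no reason to be invertible, and an isomorphism $u^*I_{\bar X}\cong I_{\bar X'}$ would not help. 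Verdier duality commutes with base change only generically --- which is again a generic base change statement, so the reduction is circular. The auxiliary moves are also problematic: (i) over a general noetherian $S$ (the stated hypothesis) you cannot shrink to a regular, quasi-excellent base, and the duality theory of Section 2.3 is only available in that restricted setting, so this route loses the stated generality; (ii) the compatibility of $f_*$ with base change is not ``$h$-local on $S$'' in any usable sense --- checking it after pulling back along a de Jong--Gabber alteration of $S$ (or of $\bar X$) presupposes a base change isomorphism along that very non-smooth, non-open cover, which is again an instance of the statement being proved (the paper only uses $h$-locality for radicial surjective integral maps, where pullback is an equivalence commuting with the six operations), and in any case one cannot make a compactification $\bar X$ smooth over $S$ by shrinking $S$; (iii) your devissage uses the wrong cofiber sequence: applying $j_*$ to $k_!k^*M\to M\to i_*i^*M$ produces the term $j_*k_!k^*M$, which is neither an extension by zero nor a pushforward of a dualizable object; one must use $i_*i^!M\to M\to k_*k^*M$, so that $j_*k_*=(jk)_*$ has dualizable coefficients and $(ji)_*i^!M$ is handled by induction.

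The paper's proof avoids Verdier duality over the base entirely. Its engine is Proposition \ref{prop:gbc}: for $f$ smooth and $M$ dualizable with $f_!(M^\wedge)$ dualizable (arranged generically over $S$ via Theorem \ref{thm:generic rigidity}), relative purity gives $f_*(M)\simeq\bigl(f_!(M^\wedge)\bigr)^\wedge(-d)[-2d]$, and this expression commutes with \emph{arbitrary} base change because $f_!$ does and because $u^*$ is symmetric monoidal, hence preserves duals of dualizable objects. The rest is Deligne's inductive scheme, which your proposal is missing: induction on the dimension of the generic fiber of $X/S$, conservativity of $f_*$ along finite morphisms (Proposition \ref{prop:localconservativity}), and the key trick in Lemma \ref{lemma: gbc with X smooth} of applying the inductive hypothesis to $f$ viewed over $\mathbf{A}^1_S$ through each coordinate function $g_i:Y\to\mathbf{A}^1_S$, which produces the open $V\subset Y$ with finite complement where $j_!j^*f_*(M)$ is already base-change friendly. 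Without an argument of this kind (or some substitute for Proposition \ref{prop:gbc}), the dualizable case --- and hence the theorem --- remains unproved in your proposal.
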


\begin{rem}
The motivic generic base change formula is also a kind of
independence of $\ell$ result\index{independence of $\ell$}
for each prime $\ell$ so that
the $\ell$-adic realization is defined, the formation
of $f_*R_\ell(M)\cong R_\ell(f_*M)$ is compatible
with any base change over $U\subset S$, where $U$
is a dense open subscheme which is given independently of $\ell$.
\end{rem}
The first step in the proof of Theorem \ref{thm:gbc} is to find
sufficient conditions for the formation a direct image to be compatible
with arbitrary base change.

\begin{prop}
Let $f:X\to S$ be a smooth morphism of finite type between noetherian schemes, and
let us consider a locally constructible $\h$-motive $M$ over $X$. Assume that
$M$ is dualizable in $\DMhlc(X)$ and that the direct image with compact support of its dual
$f_!(M^\wedge)$ is dualizable as well in $\DMhlc(S)$.
Then $f_*(M)$ is dualizable (in particular, locally constructible), and,
for any pullback square of the form
$$\begin{tikzcd}
X'\ar{r}{u}\ar{d}[swap]{f'}&X\ar{d}{f}\\
S'\ar{r}{v}&S
\end{tikzcd}$$
the morphism $f'$ is smooth, the pullback $u^*(M)$ is dualizable, so is $f'_!(u^*(M)^\wedge)$, and, furthermore,
the canonical base change map
$v^*f_*(M)\to f'_*u^*(M)$ is invertible.\label{prop:gbc}
\end{prop}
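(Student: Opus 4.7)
The plan is to exploit smoothness of $f$ in order to convert $f_*(M)$ into the Verdier dual of $f_!$ applied to $M^\wedge$. Once such an expression is in place, dualizability of $f_*(M)$ and compatibility of its formation with an arbitrary base change will both reduce to proper base change for $f_!$ and to the standard fact that a symmetric monoidal functor commutes with the formation of duals of dualizable objects.

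The first step will be a smoothness rewrite. For any $N\in\DM_h(X)$, the adjunction $f_!\dashv f^!$ combined with the internal Hom gives a natural equivalence $\sHom(f_!N,\Lambda_S)\cong f_*\sHom(N,f^!\Lambda_S)$. By relative purity, $f^!\Lambda_S\cong\Lambda_X(d)[2d]$ with $d=\dim(X/S)$, and taking $N=M^\wedge$ together with biduality $\sHom(M^\wedge,\Lambda_X)\cong M$ for the dualizable object $M$, the identity becomes
\[f_*(M)(d)[2d]\cong (f_!M^\wedge)^\wedge\, .\]
Since $f_!M^\wedge$ is dualizable in $\DMhlc(S)$ by hypothesis, so is its dual, and hence so is $f_*(M)$; local constructibility then follows from Theorem \ref{thm:generic rigidity}(1).

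Next I would address base change. Smoothness is stable under base change, so $f'$ is smooth of the same relative dimension $d$. The pullback $u^*(M)$ is dualizable because $u^*$ is symmetric monoidal, and $u^*(M^\wedge)\cong (u^*M)^\wedge$. Proper base change for $f_!$ yields a canonical equivalence $v^*(f_!M^\wedge)\cong f'_!(u^*M)^\wedge$, and the right-hand side is then dualizable as the pullback via the symmetric monoidal functor $v^*$ of a dualizable object. Applying $v^*$ to the smoothness rewrite, and using that $v^*$ commutes with the formation of duals of dualizable objects, produces
\[v^*f_*(M)(d)[2d]\cong (v^*f_!M^\wedge)^\wedge\cong (f'_!(u^*M)^\wedge)^\wedge\, ,\]
and applying the smoothness rewrite again, this time to $f'$ and to the dualizable motive $u^*M$, identifies the right-hand side with $f'_*u^*(M)(d)[2d]$. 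Cancelling the twist yields the desired equivalence $v^*f_*(M)\cong f'_*u^*(M)$.

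The hard part will be the coherence check that the composite isomorphism produced above actually agrees with the canonical base change map $v^*f_*M\to f'_*u^*M$ induced by the units and counits of the adjunctions $(f^*,f_*)$ and $(u^*,u_*)$. This amounts to verifying that the identification of the first step is natural in the pair $(f,M)$ in a way compatible with pullback squares, and that the base change transformation for $f_*$ is intertwined, under this identification, with the (canonical and, by proper base change, invertible) base change transformation for $f_!$; the key input is the compatibility of the relative purity equivalence $f^!\Lambda_S\cong\Lambda_X(d)[2d]$ with arbitrary base change along $v$. Once this naturality is in hand, invertibility of the canonical base change map for $f_*$ follows at once from invertibility of proper base change for $f_!$.
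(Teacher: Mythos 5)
Your proof is correct and follows essentially the same route as the paper: rewrite $f_*(M)\simeq (f_!(M^\wedge))^\wedge(-d)[-2d]$ via biduality, relative purity and the adjunction $\sHom(f_!N,\Lambda)\cong f_*\sHom(N,f^!\Lambda)$, then conclude dualizability and base change from the fact that $v^*$ is symmetric monoidal together with proper base change for $f_!$. The coherence issue you flag at the end (that the composite equivalence really is the canonical base change map) is one the paper itself dispatches with a ``we deduce right away'', so your extra care there is welcome but not a deviation in method.
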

\begin{proof}
If $d$ denotes the relative dimension of $X$ over $S$
(seen as a locally constant function over $S$), we have:
\begin{align*}
f_*(M)&\simeq f_*\sHom(M^\wedge,\Lambda)\\
&\simeq f_*\sHom(M^\wedge,f^!\Lambda)(-d)[-2d]\\
&\simeq \sHom(f_!(M^\wedge),\Lambda)(-d)[-2d]\\
&\simeq (f_!(M^\wedge))^\wedge(-d)[-2d]
\end{align*}
(where the dual of a dualizable object $A$ is denoted by $A^\wedge$).
Remark that pullback functors $v^*$ are symmetric monoidal and thus
preserve dualizable objects as well as the formation of their duals.
Therefore, for any pullback square of the form
$$\begin{tikzcd}
X'\ar{r}{u}\ar{d}[swap]{f'}&X\ar{d}{f}\\
S'\ar{r}{v}&S
\end{tikzcd}$$
we have that $f'$ is smooth of relative dimension $d$, that $u^*(M)$ is dualizable with dual
$u^*(M)^\wedge\simeq u^*(M^\wedge)$, and:
\begin{align*}
v^*f_*(M)&\simeq v^*(f_!(M^\wedge))^\wedge(-d)[-2d]\\
&\simeq (v^*f_!(M^\wedge))^\wedge(-d)[-2d]\\
&\simeq (f'_!u^*(M^\wedge))^\wedge(-d)[-2d]\\
&\simeq (f'_!(u^*(M)^\wedge))^\wedge(-d)[-2d]
\end{align*}
This also shows that $f'_!(u^*(M)^\wedge)$ is dualizable and thus that there is a canonical isomorphism
$$(f'_!(u^*(M)^\wedge))^\wedge(-d)[-2d]\simeq f'_*(u^*(M))\, .$$
We deduce right away from there that the canonical base change map
$v^*f_*(M)\to f'_*(u^*(M))$ is invertible.
\end{proof}
\begin{rem}
In the preceding proposition, we did not use any particular property of $\DMhlc$:
the statement and its proof hold in any context in which we have the six operations
(more precisely, we mainly used the relative purity theorem as well as the proper base change theorem).
\end{rem}

In order to prove Theorem \ref{thm:gbc} in general, we need to verify the following property
of $\h$-motives.

\begin{prop}
Let $S$ be a noetherian scheme of finite dimension, and $f:Y\to S$ a quasi-finite morphism of finite type.
The functors $f_!:\DMh(X)\to\DMh(S)$ and $f_*:\DMh(X)\to\DMh(S)$
are conservative.\label{prop:localconservativity}
\end{prop}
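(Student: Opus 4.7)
The plan is to reduce via Zariski's main theorem to the case of a finite surjective morphism, and then to handle finite pushforward by showing it is conservative on compact generators (i.e.\ constructible motives), which in turn reduces to the classical theory of \'etale sheaves via rigidity.

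By Zariski's main theorem, the separated quasi-finite morphism of finite type $f$ factors as $f = p\circ j$, with $j:Y\hookrightarrow\bar Y$ an open immersion and $p:\bar Y\to S$ finite. Since $p$ is proper, $p_!\simeq p_*$, so $f_! = p_*\,j_\sharp$ and $f_* = p_*\,j_*$. For an open immersion $j$, both $j_\sharp$ and $j_*$ are fully faithful and hence conservative. Writing $p = i\circ q$ with $i$ the closed immersion of the scheme-theoretic image and $q:\bar Y\twoheadrightarrow Z$ finite surjective, the fully faithfulness of $i_*$ further reduces the question to conservativity of $q_*$ for a finite surjective morphism $q$.

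Since $q$ is proper, $q_*\simeq q_!$ preserves small colimits, so the full subcategory $\ker(q_*)\subset\DMh(\bar Y)$ of objects annihilated by $q_*$ is a localising subcategory. Because $\DMh(\bar Y)$ is compactly generated by (shifts and twists of) constructible motives in $\DM_{h,c}(\bar Y,\Lambda)$, it suffices to show that $q_*$ is conservative on $\DM_{h,c}(\bar Y,\Lambda)$. Let $M \in \DM_{h,c}(\bar Y,\Lambda)$ with $q_*(M)\simeq 0$. By proper base change, for every point $s\in Z$ one has $q_{s,*}(M_{|\bar Y_s})\simeq 0$, where $q_s:\bar Y_s\to\mathrm{Spec}\kappa(s)$ is the fibre. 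Since $\bar Y_s$ is a finite $\kappa(s)$-scheme and the reduction map $\bar Y_{s,\mathrm{red}}\to\bar Y_s$ is a universal homeomorphism (an equivalence on $h$-motives), we may assume $\bar Y_s = \coprod_\alpha\mathrm{Spec}\kappa_\alpha$ with each $\kappa_\alpha/\kappa(s)$ finite, so that $q_{s,*}$ splits as a direct sum of pushforwards $p_{\alpha,*}$. Each $p_\alpha$ is conservative: for purely inseparable extensions this is immediate, while for separable extensions one reduces to a Galois closure $L/\kappa(s)$, where the base change $\mathrm{Spec}\kappa_\alpha\times_{\kappa(s)}\mathrm{Spec} L \cong \coprod\mathrm{Spec} L$ combined with $h$-descent for finite \'etale surjective maps yields the claim. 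Therefore $M_{|\bar Y_s}\simeq 0$ for every $s\in Z$.

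Finally, a constructible $h$-motive vanishing at every residue point is itself zero: after the torsion/rational decomposition of the coefficients $\Lambda$, this follows from the rigidity theorem \ref{rigidity theorem} applied to the torsion part (where constructible $h$-motives become classical constructible \'etale sheaves, for which joint conservativity of residue stalks is standard) and from the stratification structure of $\DM_{h,c}(\bar Y,\QQ)$ coming from Theorem \ref{thm:generic rigidity} for the rational part. The main obstacle is thus the clean transfer between the $\Lambda$-linear setting and its torsion/rational pieces, together with the compact-generation reduction, which crucially requires that $q_*$ preserves small colimits: it is precisely the properness of $q$ (giving $q_* = q_!$) which enters essentially here. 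Once this is in place, $q_*(M)\simeq 0$ forces $M\simeq 0$, and the same argument applies identically to both $f_!$ and $f_*$.
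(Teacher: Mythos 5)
Your reduction via Zariski's main theorem to a finite surjective morphism $q$, and the fibrewise analysis of $q_*$ over the points of the base (splitting the reduced fibre into residue field extensions, handling the inseparable part by invariance under universal homeomorphisms and the separable part by passing to a Galois closure), are all sound and close in spirit to what is actually needed. The genuine gap is the pivotal reduction step: from ``$\ker(q_*)$ is a localising subcategory and $\DMh(\bar Y)$ is compactly generated by constructible motives'' you conclude that it suffices to check conservativity on constructible objects. That inference is false in general: compact generation of the ambient category does not imply that a localising subcategory is generated by the compact objects it contains, and a localising subcategory can be nonzero while containing no nonzero compact object. For a concrete counterexample to the principle you are using, consider the colimit-preserving endofunctor $M\mapsto M\otimes^{\mathbf L}\bigl(\bigoplus_p\ZZ/p\ZZ\bigr)$ of $D(\ZZ)$: it kills no nonzero perfect complex, yet it kills $\QQ$, so it is conservative on compacts without being conservative. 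Hence ``$q_*$ kills no nonzero constructible motive'' does not yield the proposition, which is stated (and later used, e.g.\ on objects of the form $i^*f_*(M)$ in the generic base change argument) for arbitrary objects of $\DMh$. To repair your route you would have to show instead that the objects $q^*N$, for $N$ running over a generating family of the target, generate the source category --- which is essentially the original difficulty in disguise. A secondary, smaller gap: even for constructible $M$, the final step ``vanishing of all fibres implies $M\simeq 0$'' is not a formal consequence of rigidity plus the existence of stratifications; one needs a continuity/spreading-out argument (the generic point being a limit of open subschemes) or a noetherian induction to pass from vanishing at a point to vanishing on a neighbourhood.

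The paper avoids both problems by never leaving the big category: after the same reduction to $f$ finite, it uses that $f_!\simeq f_*$ commutes with arbitrary base change to run a noetherian induction on $S$, so that it suffices to verify conservativity after restricting to a dense open of one's choice, and even after pulling back to an \'etale neighbourhood of the generic points of $S$ (pullback along surjective \'etale maps being conservative on $\DMh$). Over such a neighbourhood the finite morphism is decomposed into a surjective radicial part --- for which $f_!$ is an equivalence, universal homeomorphisms being invertible locally for the $h$-topology --- and a finite \'etale part, which after further \'etale localisation becomes a finite disjoint union of isomorphisms and empty schemes, where the statement is trivial. This argument is valid for arbitrary objects, and your residue-field and Galois-closure manipulations are in effect the fibrewise shadow of this generic splitting; what is missing in your write-up is a reduction step that is itself legitimate for non-compact objects.
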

\begin{proof}
If $f$ is an immersion, then $f_!$ and $f_*$ are fully faithful, hence conservative.
Since the composition of two conservative functors is conservative,
Zariski's Main Theorem implies that it is sufficient to prove the case where $f$ is finite.
In this case,
since the formation of $f_!\simeq f_*$ commutes with base change along any map $S'\to S$,
by noetherian induction, it is sufficient to prove this assertion after restricting to a dense
open subscheme of $S$ of our choice.
Since, for $\h$-motives, pulling back along a surjective \'etale morphism is conservative,
we may even replace $S$ by an \'etale neighbourhood of its generic points.
For $f$ surjective and radicial, \cite[Proposition~6.3.16]{CD4} ensures that $f_!$ is an equivalence
of categories. We may thus assume that $f$ also is \'etale.
If ever $X=X'\amalg X''$, and if $f'$ and $f''$ are the restriction of $f$ to $X'$
and $X''$, respectively, then we have
$\DMh(X)\simeq \DMh(X')\times\DMh(X'')$, and the functor $f_!$ decomposes into
$$f_!(M)=f'_!(M')\oplus f''_!(M'')$$
for $M=(M',M'')$. Therefore, it is then sufficient to prove the proposition for $f'$ and $f''$
separately. Replacing $S$ by an \'etale neighbourhood of its generic points, we may thus assume
that either $X$ is empty, either $f$ is an isomorphism, in which cases the assertion is trivial.
\end{proof}

\begin{paragr}
Let $P(n)$ be the assertion that, whenever $S$ is integral
and $f:X\to Y$ is a separated morphism of $S$-schemes
of finite type, such that the dimension of the generic fiber of $X$ over $S$
is smaller than or equal to $n$, then, for any locally construtible $\h$-motive $M$
on $X$, there is a dense open subscheme $U$ of $S$ such that the formation
of $f_*(M)$ is compatible with base change along maps
$S'\to U\subset S$.

From now on, we fix a separated morphism of $S$-schemes of finite type $f:X\to Y$;
as well as a locally constructible $\h$-motive $M$ on $X$.
\end{paragr}

\begin{Lemma}
The property that there exists a dense open subscheme $U\subset S$
such that the formation of $f_*(M)$ is stable under any base change along
maps $S'\to U\subset S$ is local on $Y$ for the Zariski
topology.\label{lemma:gbc is local}
\end{Lemma}

\begin{proof}
Indeed, assume that there is an open covering $Y=\bigcup_j V_i$
such that, for each $j$, there is a dense open subset $U_j\subset U$
with the property that the formation of the motive
$(f^{-1}(V_j)\to V_j)_*(M_{f^{-1}(V_j)})$ is stable under any base
change along maps of the form $S'\to U_j\subset S$.
Since $Y$ is noetherian, we may assume that there finitely many $V_j$'s,
so that $U=\bigcap_j U_j$ is a dense open subscheme of $S$.
For any $j$, the formation of
$(f^{-1}(V_j)\to V_j)_*(M_{f^{-1}(V_j)})$ is stable under any base
change along maps of the form $S'\to U\subset S$.
Since pulling back along open immersions commutes with any push-forward,
one deduces easily that the formation of $f_*(M)$
is stable under any base change of the form
$(f^{-1}(V_j)\to V_j)_*(M_{f^{-1}(V_j)})$ is stable under any base
change along maps of the form $S'\to U\subset S$.
\end{proof}

\begin{Lemma}
Assume that there is a compactification of $Y$: an
open immersion $j:Y\to \bar Y$ with $\bar Y$ a proper $S$-scheme.
If there is a dense open subscheme $U$ such that the formation
of $(jf)_*(M)$ is compatible with all base changes
along maps $S'\to U\subset S$, then 
the formation
of $f_*(M)$ is compatible with all base changes
along maps $S'\to U\subset S$.\label{lemma:gbc compatible with compacticications}
\end{Lemma}

\begin{proof}
This follows right away from the fact that
pulling back along $j$ is compatible with any base changes
and from the fully faithfulness of the functor $j_*$
(so that $j^*j_* f_*(M)\simeq f_*(M)$).
\end{proof}

\begin{Lemma}
Assume that $S$ is integral, that the dimension of the generic
fiber of $X$ over $S$ is $n\geq 0$, and that $P(n-1)$ holds.
If $X$ is smooth over $S$, and if $M$ is dualizable, then there is a dense open
subscheme of $S$ such that the formation of $f_*(M)$ is stable under
base change along maps $S'\to U\subset S$.\label{lemma: gbc with X smooth}
\end{Lemma}

\begin{proof}
Since pulling back along open immersions commutes with any push-forward,
and since $Y$ is quasi-compact, the problem is local over $Y$.
Therefore, we may assume that $Y$ is affine.
Let us choose a closed embedding $Y\subset\mathbf{A}^d_S$
determined by $d$ functions $g_i:Y\to\mathbf{A}^1_S$, $1\leq i\leq d$.
For each index $i$,
we may apply $P(n-1)$ to $f$, seen as open
embedding of schemes over $\mathbf{A}^1_S$ through the structural map $g_i$.
This provides a dense open subscheme $U_i$ in $\mathbf{A}^1_S$ such that
the formation of $f_*(M)$ is compatible with any base change of $g_i$ along
a map $S'\to\mathbf{A}^1_S$ which factors through $U_i$.
Let $V$ be the union of all the open subschemes $g^{-1}_i(U_i)$, $1\leq i\leq d$,
and let us write $j:V\to Y$ for the corresponding open immersion.
Then the formation of $j_!j^*f_*(M)$ is compatible
with any base change $S'\to S$.
Let us choose a closed complement $i:T\to Y$ to $j$.
Then $T$ is finite: the reduced geometric fibers of $T/S$ are traces on $Y$
of the subvarieties of $\mathbf{A}^d$ determined by the vanishing of all the non constant polynomials
$p_i(x_i)=0$, $1\leq i\leq d$, where $p_i(x)$ is a polynomial such that $U_i=\{p_i(x)\neq 0\}$.

We may now consider the closure $\bar Y$ of $Y$ in $\mathbf{P}^n_S$.
Any complement of $V$ in $\bar Y$ is also finite over a dense open
subscheme of $S$: the image in $S$ of the complement of $V$ in $\bar V$
is closed (since $\bar V$ is proper over $S$), and does not contain the generic
point (since the generic fiber of $X$ is not empty), so that we may
replace $S$ by the complement of this image.
By virtue of Lemma \ref{lemma:gbc compatible with compacticications},
we may replace $Y$ by $\bar Y$, so that we are reduced to the following situation:
the scheme $Y$ is proper over $S$, and there is a dense open immersion
$j:V\to Y$ with the property that the formation of $j_!j^*f_*(M)$ is compatible
with any base change $S'\to S$, and that after shrinking $S$,
there is a closed complement $t:T\to Y$ of $V$ which is finite over $S$.
We thus have the following canonical cofiber sequence
$$j_!j^*f_*(M)\to f_*(M)\to i_*i^*f_*(M)$$
Let $p:Y\to S$ be the structural map (which is now proper).
We already know that the formation of
$j_!j^*f_*(M)$ is compatible with any base change of the form $S'\to S$.
Therefore, it is sufficient to prove that, possibly after shrinking $S$,
the formation of $i_*i^*f_*(M)$ has the same property. Since $i_!\simeq i_*$,
this means that this is equivalent to the property that, possibly after shrinking $S$,
the formation of $i^*f_*(M)$ is compatible with any base change of the form $S'\to S$.
But the composed morphism $pi$ being finite, by virtue of Proposition \ref{prop:localconservativity},
we are reduced to prove this property for $p_*i_*i^*f_*(M)$. We then have the
following canonical cofiber sequence
$$p_*j_!j^*f_*(M)\to (pf)_*(M)\to (pi)_*i^*f_*(M)$$
By virtue of Proposition \ref{prop:gbc}, possibly after shrinking $S$,
the formation of $(pf)_*(M)$ is compatible with any base change.
Since $p$ is proper, we have the proper base change formula (because $p_!\simeq p_*$),
and therefore, the formation of $j_!j^*f_*(M)$
being compatible with any base change of the form $S'\to S$,
the formation of $p_*j_!j^*f_*(M)$ is also compatible with
any base change $S'\to S$. One deduces that, possibly after shrinking $S$
the fomration of $(pi)_*i^*f_*(M)$ is also compatible with
any base change $S'\to S$.
\end{proof}
\begin{proof} {\itshape \textbf{of Theorem \ref{thm:gbc}}}
We observe easily that it is sufficient to prove the case
where $S$ is integral.
We shall prove $P(n)$ by induction. The case $n=-1$ is clear.
We may thus assume that $n\geq 0$ and that $P(n-1)$ holds true.
Locally for the $\h$-topology, radicial surjective and integral morphisms
are isomorphisms; in particular,
pulling back along a radicial surjective and integral morphism
is an equivalence of categories which commutes with the six operations.
There is a dense open subscheme $U$ of $S$
and a finite radicial and surjective map  $U'\to U$, so that
$X'=X\times_S U'$ has a dense open subscheme
which is smooth over $U'$
(it is sufficent to prove this over the spectrum of the field of
functions of $S$, by standard limit arguments). Replacing $S$ by $U'$
and $X$ by $X'$, we may thus assume, without loss of generality,
that the smooth locus of $X$ over $S$ is a dense open subscheme.

Let $j:V\to X$ be a dense open immersion such that $V$ is smooth over $S$.
Shrinking $V$, we may assume furthermore that $M_{|V}$ is dualizable in $\DMh(V)$.
We choose a closed complement $i:Z\to X$ of $V$. With $N=i^!(M)$,
we then have the following canonical cofiber sequence:
$$i_*(N)\to M\to j_*j^*(M)$$
By virtue of Lemma \ref{lemma: gbc with X smooth}, possibly after shrinking $S$,
we may assume that the formation of $j_*(M)$ is compatible with base changes
along maps $S'\to S$. So is the formation of $i_*(N)$, since $i$
is proper. Applying the functor $f_*$ to the distinguished triangle above,
we obtain the following cofiber sequence:
$$(fi)_*(N)\to f_*(M)\to (fj)_*j^*(M)\, .$$
We may apply Lemma \ref{lemma: gbc with X smooth} to $fj$ and $M$,
and observe that $P(n-1)$ applies to $fi$ and $N$.
Therefore, there exists a dense open subscheme $U\subset S$
such that the formation of $(fi)_*(N)$ and of $(fj)_*j^*(M)$
is compatible with any base change along maps $S'\to U\subset S$.
This implies that the formation of $f_*(M)$ is compatible
with such base changes as well.
\end{proof}
\section{Characteristic classes }

\subsection{K\"unneth Formula}
\begin{paragr}
Let $k$ be a field. All schemes will be assumed to be
separated of finite type over $k$. 
\end{paragr}
\begin{thm}
Let $f:X\to Y$ be a map of schemes, and $T$ a scheme. Consider the square

\[
\begin{tikzcd}
T\times X\ar{r}{pr_2}\ar{d}{1\times f} & X\ar{d}{f}\\
T\times Y\ar{r}{pr_2}& Y
\end{tikzcd}
\]

obtained by multiplying $f:X\to Y$ and $T\to Spec(k)$.
Then $pr_2^{*}f_{*}\cong(1\times f)_{*}pr_2^{*}$ holds. 
\end{thm}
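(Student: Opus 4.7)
My plan is to derive this directly from the motivic generic base change theorem (Theorem~\ref{thm:gbc}) applied with base $S=\spec{k}$. By the standing assumption of the subsection, $T$, $X$ and $Y$ are all separated of finite type over $k$. The square in the statement extends to
\[
\begin{tikzcd}
T\times X\ar{r}{pr_2}\ar{d}{1\times f} & X\ar{d}{f}\\
T\times Y\ar{r}{pr_2}\ar{d} & Y\ar{d}\\
T\ar{r}{a} & \spec{k}
\end{tikzcd}
\]
in which $pr_2:T\times Y\to Y$ is the pullback of the structure map $a:T\to S$ along $Y\to S$, and similarly for the top row. This is precisely the setup of Theorem~\ref{thm:gbc}, with $S'=T$ and change-of-base morphism $w=a$.

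Now, Theorem~\ref{thm:gbc}, applied to a locally constructible motive $M$ on $X$, furnishes a dense open subscheme $U\subset S$ such that the base change morphism $v^*f_*(M)\to f'_*u^*(M)$ is invertible whenever $w:S'\to S$ factors through $U$. But $S=\spec{k}$ is a single point, so its only dense open is $S$ itself; hence $U=S$ and the factorization condition is vacuous. Specializing to $S'=T$ and $w=a$ yields the desired isomorphism $pr_2^*f_*(M)\cong(1\times f)_*pr_2^*(M)$.

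The main point is that the ``generic'' part of generic base change becomes vacuous when the base is a field, so there is no real obstacle once Theorem~\ref{thm:gbc} is available. The only caveat to flag is that Theorem~\ref{thm:gbc} is stated for locally constructible $M$; this is in any case the implicit setting for the K\"unneth formula and the characteristic class constructions that follow in this subsection, so it imposes no genuine restriction.
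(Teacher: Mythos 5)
Your application of Theorem \ref{thm:gbc} with $S=\spec{k}$, $S'=T$, $w=a$ is exactly the right move, and it is the same first step the paper takes: over a field the only dense open of the base is the base itself, so generic base change gives $pr_2^{*}f_{*}(M)\cong(1\times f)_{*}pr_2^{*}(M)$ for every \emph{locally constructible} $M$. The gap is in your final caveat. The theorem as stated is an isomorphism of functors on all of $\DM_h(X,\Lambda)$, and dismissing the constructibility hypothesis as ``no genuine restriction'' is not an argument; moreover the general case is actually used afterwards (e.g.\ the K\"unneth formula with compact support is stated for arbitrary $A\in\DM_h(U,\Lambda)$, $B\in\DM_h(V,\Lambda)$, and the commutation of $pr_2^*$ with $\uHom(M,-)$ is needed with no hypothesis on the second variable $N$).

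What is missing is the extension step the paper supplies: both $pr_2^{*}f_{*}$ and $(1\times f)_{*}pr_2^{*}$ are colimit-preserving functors, and $\DM_h(X,\Lambda)$ is generated under colimits by (locally) constructible objects, so a natural transformation between them that is invertible on locally constructible motives is invertible everywhere. Note that this is not completely formal: $f_*$ is a right adjoint, so its commutation with colimits is a genuine input, valid here because $f$ is separated of finite type (exactness reduces it to commutation with small sums, which is \cite[Prop.~5.5.10]{CD4}); the pullback functors preserve colimits since they are left adjoints. Adding this paragraph closes the argument and makes your proof agree with the paper's.
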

\begin{proof}
Since, for a field $k$ and $S=Spec(k)$, the only dense open
subscheme of $S$ is $S$ itself, the generic base change formula gives
that the canonical map $pr_2^{*}f_{*}(M)\to(1\times f)_{*}pr_2^{*}(M)$
is an isomorphism for any locally constructible motive $M$ on $X$.
Since we are comparing colimit preserving functors and since any
motive is a colimit of locally constructible ones, this proves
the theorem.
\end{proof}
Some consequences:
\begin{enumerate}
\item Take $X,T$ to be schemes and $pr_{2}:T\times X\to X$ the projection.
Then, for any $M$ locally constructible on $X$ we have:
\[pr_{2}^{*}\uHom(M,N)\cong \uHom(pr_{2}^{*}M,pr_{2}^{*}N)\, .\]
It is proved by producing a canonical map and then prove for a fixed
$N$ and reduce to the case where $M$ is a generator, namely $M=f_{\sharp}\Lambda$
for smooth $f$. Then we get $\uHom(M,N)\cong f_{*}f^{*}M$. 
\item For a morphism $f:X\to Y$ consider the square below.
\[
\begin{tikzcd}
T\times X\ar{r}{pr_{2}}\ar{d}{1\times f} & X\ar{d}{f}\\
T\times Y\ar{r}{pr_{2}} & Y
\end{tikzcd}
\]
Then $pr_{2}^{*}f^{!}\cong(1\times f)^{!}pr_{2}^{*}$. 
\end{enumerate}
For the proof observe that this is a local problem so that we may assume
$f$ is quasi-projective. The map $f$ then has a factorization $f=g\circ i\circ j$
where $g$ is smooth, $i$ is a closed immersion, and $j$ is an open immersion.
Then $j^{*}=j^{!}$ and $g^{*}=g^{!}(-d)[-2d]$ so we reduce to the
case where $f$ is a closed immersion. Then $f_{*}$
and $(1\times f)_{*}$ are fully faithfull hence conservative. Therefore, it
suffices to show 
\[
(1\times f)_{*}pr_{2}^{*}f^{!}\cong(1\times f)_{*}(1\times f)^{!}pr_{2}^{*}\, .
\]
Since left hand side is isomorphic to
\[
pr_{2}^{*}f_{*}f^{!}\, ,
\]
we only need to commute $f_{*}f^{!}$ and $pr_{2}^{*}$. Now
observe that $f_{*}f^{!}(M)\cong \uHom(f_{*}\Lambda,M).$ So we deduce the
commutation of $f_{*}f^{!}$ from the commutation with internal $\uHom$
and $f_{*}$ (which we both know). 
We finally have proper basechange $pr_{2}^{*}f_{*}(\Lambda)\cong(1\times f)_{*}pr_{2}^{*}$
and this finishes the proof.
\begin{rem}
If $f$ is smooth or $M$ is `smooth' (dualizable) then for all
$N$ we have 
\[
f^{*}\uHom(M,N)\cong \uHom(f^{*}M,f^{*}N)
\]
(see Remark~\ref{rem:dualizable functorial}).
\end{rem}
\begin{paragr}
For $X$ a scheme and $a:X\to Spec(k)$ we define the dualizing sheaf
to be $I_{X}=a^{!}\Lambda$ and $\mathbf{D}_{X}=\uHom(-,I_{X})$. If
$X,Y$ are schemes we can consider their product $X\times Y$ with
projections $p_{X}:X\times Y\to X$ and $p_{Y}:X\times Y\to Y$.
If $M,N$ are motivic sheaves on
$X,Y$ respectively, we can define 
\[
M\boxtimes N:=p_{X}^{*}M\otimes p_{Y}^{*}N
\]
 and then, recalling that $A\otimes B\cong\mathbf{D}\uHom(A,\mathbf{D}B)$,
we get that 
\[
M\boxtimes N\cong\mathbf{D}(\uHom(p_{X}^{*}M,\mathbf{D}p_{Y}^{*}N))
\cong\mathbf{D}\uHom(p_{X}^{*}M,p_{Y}^{!}\mathbf{D}N)
\]
 and therefore 
\[
M\boxtimes\mathbf{D}N\cong\mathbf{D}\uHom(p_{X}^{*}M,p_{Y}^{!}N)
\]
 \end{paragr}
\begin{thm}
Let $X,Y$ be schemes and $N$ locally constructible on $Y$. Then
$p_{Y}^{!}N\cong I_{X}\boxtimes N$. 
\end{thm}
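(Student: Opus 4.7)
The plan is to build a natural comparison $\alpha_N\colon I_X\boxtimes N\to p_Y^!(N)$ and show it is an isomorphism for every locally constructible $N$ on $Y$. By the adjunction $p_{Y!}\dashv p_Y^!$, giving $\alpha_N$ amounts to constructing a map $p_{Y!}(p_X^*I_X\otimes p_Y^*N)\to N$. The projection formula identifies the source with $p_{Y!}(p_X^*I_X)\otimes N$, and proper base change for the Cartesian square with corners $X\times Y$, $X$, $Y$, $\spec{k}$ gives $p_{Y!}(p_X^*I_X)\cong a_Y^*(a_{X!}a_X^!\Lambda)$; composing with $a_Y^*$ applied to the counit $a_{X!}a_X^!\Lambda\to\Lambda$ and tensoring with $N$ yields $\alpha_N$, which is plainly natural in $N$.

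I would first verify that $\alpha_N$ is an isomorphism when $N$ is dualizable in $\DMh(Y)$. Using dualizability one writes $N\cong\uHom(N^\wedge,\Lambda)$, and since $p_Y^*$ is symmetric monoidal one has $p_Y^*(N^\wedge)\cong (p_Y^*N)^\wedge$; combining with the exchange $p_Y^!\uHom(A,B)\cong\uHom(p_Y^*A,p_Y^!B)$ from the Exercise after the Projection Formula, together with the dualizability of $p_Y^*N$, one computes $p_Y^!(N)\cong p_Y^!\uHom(N^\wedge,\Lambda)\cong\uHom(p_Y^*N^\wedge,p_Y^!\Lambda)\cong p_Y^*(N)\otimes p_Y^!(\Lambda)$. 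The identification $p_Y^!(\Lambda)\cong p_X^*I_X$ is exactly consequence (2) of the K\"unneth theorem applied to the structural map $a_X\colon X\to\spec{k}$ with parameter $T=Y$. A direct comparison of the constructions shows this composite iso agrees with $\alpha_N$.

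Finally, I would extend to arbitrary locally constructible $N$ by Noetherian induction and d\'evissage. By Theorem \ref{thm:generic rigidity}(3) there is a dense open immersion $j\colon V\hookrightarrow Y$ with $j^*N$ dualizable; let $i\colon Z\hookrightarrow Y$ denote the closed complement. Both functors $I_X\boxtimes(-)$ and $p_Y^!$ are exact, so applying them to the localization cofiber sequence $j_!j^*N\to N\to i_*i^*N$ yields a morphism of cofiber sequences via $\alpha$, and two-out-of-three reduces the claim to its two outer terms. For $N=i_*N_Z$, proper base change together with the projection formula identify $I_X\boxtimes i_*N_Z\cong \tilde\imath_*(I_X\boxtimes_{X,Z}N_Z)$, while $p_Y^!i_*\cong\tilde\imath_*p_Z^!$ is the base change $f^!v_*\cong u_*f'^!$; under these identifications $\alpha_{i_*N_Z}$ becomes $\tilde\imath_*\alpha^{(Z)}_{N_Z}$, which is invertible by the induction hypothesis since $\dim Z<\dim Y$.

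For $N=j_!N_V$ one likewise has $I_X\boxtimes j_!N_V\cong\tilde\jmath_!(I_X\boxtimes_{X,V}N_V)$ from base change and the projection formula for $\tilde\jmath_!$, and one needs the less standard exchange $p_Y^!j_!\cong\tilde\jmath_!p_V^!$. I would obtain this by feeding $M=p_Y^!(j_!N_V)$ into the localization triangle $\tilde\jmath_!\tilde\jmath^*M\to M\to\tilde\imath_*\tilde\imath^*M$ on $X\times Y$: consequence (2) gives $\tilde\jmath^*p_Y^!\cong p_V^!j^*$ (both $j$ and $\tilde\jmath$ are \'etale), while a short adjunction computation using $i^!j_!=0$ shows $\tilde\imath^*p_Y^!(j_!N_V)=0$, so $M\cong\tilde\jmath_!p_V^!N_V$. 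With these identifications $\alpha_{j_!N_V}=\tilde\jmath_!\alpha^{(V)}_{N_V}$, which is invertible by the dualizable case. The main technical hurdle is this last base-change identification $p_Y^!j_!\cong\tilde\jmath_!p_V^!$ and the bookkeeping needed to verify that $\alpha$ is genuinely carried into $\tilde\jmath_!\alpha^{(V)}$ and $\tilde\imath_*\alpha^{(Z)}$ under the various exchange isomorphisms; everything else is a direct application of the six-functor formalism.
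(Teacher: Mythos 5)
Your construction of the natural comparison map $\alpha_N$ and your proof in the dualizable case are correct (and use the same input as the paper, namely the exchange $p_X^*a_X^!\cong p_Y^!a_Y^*$ coming from the generic base change theorem); the closed-stratum step of your d\'evissage is also sound, since it only uses the legitimate exchanges $p_Y^*i_*\cong\tilde\imath_*p_Z^*$ and $p_Y^!i_*\cong\tilde\imath_*p_Z^!$. The gap is in the open-stratum step. First, the identity $i^!j_!=0$ that you invoke is false: for a complementary closed/open pair one has $i^*j_!=0$ and $i^!j_*=0$, but comparing the two localization triangles gives $i^!j_!A\cong i^*j_*A[-1]$, which is already nonzero for $A=\Lambda$ and $\mathbf{G}_m\subset\mathbf{A}^1$. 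Second, and more seriously, the exchange $p_Y^!j_!\cong\tilde\jmath_!p_V^!$ — equivalently the vanishing $\tilde\imath^*p_Y^!(j_!N_V)=0$ — is not available by a short formal computation: base change of a $!$-pullback against a $!$-pushforward fails in general (the same example $i^!j_!\neq 0$ is the counterexample, applied to the Cartesian square with empty fibre product), and here $p_Y$ is the base change of $a_X$ with $X$ arbitrary, so it is neither smooth nor proper and none of the paper's base change theorems applies. This vanishing is essentially the content of the theorem along $X\times Z$, so your induction does not close at exactly this point.

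The step can be repaired, but only by injecting the external input the paper relies on. The paper argues directly: from $p_X^*I_X\cong p_Y^!\Lambda$ it deduces $p_X^*I_X\cong\mathbf{D}_{X\times Y}p_Y^*I_Y$, and then concludes by a chain of isomorphisms using biduality, Proposition \ref{internal Hom and duality}, and the fact (consequence of Theorem \ref{thm:gbc}) that $pr_2^*$ commutes with $\uHom$ out of a locally constructible object. If you prefer to keep a d\'evissage, a robust variant is to use Proposition \ref{proper devissage} instead of the open/closed decomposition: after pulling back along an \'etale surjection to reduce to constructible $N$, it suffices to treat $N=f_*\Lambda(n)$ with $f:W\to Y$ proper, where only the valid exchange $p_Y^!f_*\cong\tilde f_*p_W^!$, the compact-support K\"unneth formula, and $p_{X}^*I_X\cong p_{W}^!\Lambda$ (i.e.\ the same consequence of generic base change, applied over $W$) are needed, and the problematic $j_!$-exchange never appears. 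Alternatively, your open-stratum step itself can be fixed by writing $p_Y^!\cong\mathbf{D}p_Y^*\mathbf{D}$ on locally constructible objects and commuting $p_Y^*$ with $j_*$ via the theorem $pr_2^*f_*\cong(1\times f)_*pr_2^*$ (which rests on the motivic generic base change formula, not on ordinary base change) — but that is the paper's duality argument in disguise.
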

\begin{proof}
Let $a_{X}$ and $a_{Y}$ be the structure maps of $X,Y$ to $Spec(k)$.
Then 
\[
p_{X}^{*}a_{X}^{!}\cong p_{Y}^{!}a_{Y}^{*}.
\]
We have $I_{X}=a_{X}^{!}(\Lambda)$ and $p_{X}^{*}(I_{X})\cong p_{X}^{!}(\Lambda).$
Moreover:
\[p_{X}^{*}(I_{X})\cong p_{X}^{*}(\mathbf{D}_{X}\Lambda)
\cong\mathbf{D}_{X\times Y}p_{X}^{!}\Lambda\cong\mathbf{D}_{X\times Y}p_{Y}^{*}(I_{Y})\, .\]
Then we have 
\begin{align*}
p_{X}^{*}I_{X}\otimes p_{Y}^{*}N
&\cong\mathbf{D}p_{Y}^{*}I_{Y}\otimes p_{Y}^{*}N\\
&\cong\mathbf{D}\uHom(p_{Y}^{*}N,p_{Y}^{*}I_{Y})\\
&\cong\mathbf{D}p_{Y}^{*}\uHom(N,I_{Y})\\
&\cong\mathbf{D}p_{Y}^{*}\mathbf{D}N\\
&\cong p_{Y}^{!}N
\end{align*}
Hence $I_{X}\boxtimes N\cong p_{Y}^{!}N$.
\end{proof}
\begin{cor}
$I_{X}\boxtimes I_{Y}\cong I_{X\times Y}$.
\end{cor}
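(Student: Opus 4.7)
The plan is to apply the preceding theorem with $N = I_Y$. First, observe that the structural morphism of the product factors through the second projection: we have $a_{X\times Y} = a_Y \circ p_Y$, where $a_X\colon X\to\spec{k}$, $a_Y\colon Y\to \spec{k}$ and $a_{X\times Y}\colon X\times Y\to\spec{k}$ are the structural maps. By the compatibility of the upper-shriek operation with composition, this gives
\[
I_{X\times Y} \;=\; a_{X\times Y}^{!}(\Lambda) \;\cong\; p_Y^{!}\,a_Y^{!}(\Lambda) \;\cong\; p_Y^{!}(I_Y)\, .
\]

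To invoke the previous theorem with $N = I_Y$, I would next verify that $I_Y$ is locally constructible on $Y$. This is where I would appeal to the Cisinski--Déglise finiteness theorem: since $\Lambda$ is locally constructible over the (quasi-excellent) base $\spec{k}$, and since $a_Y$ is separated of finite type, the functor $a_Y^{!}$ preserves local constructibility, so $I_Y = a_Y^{!}(\Lambda)$ lies in $\DMhlc(Y,\Lambda)$. Granted this, the previous theorem directly yields
\[
I_{X\times Y} \;\cong\; p_Y^{!}(I_Y) \;\cong\; I_X \boxtimes I_Y\, ,
\]
which is exactly the asserted isomorphism.

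There is essentially no obstacle here: the argument is a two-line reduction to the theorem just established, together with the formal fact that $(-)^{!}$ respects composition and preserves local constructibility. The only point worth flagging is the implicit use of the finiteness theorem to ensure that $I_Y$ is locally constructible, which is the only hypothesis of the previous theorem that is not completely formal. One could equally well apply the theorem with the roles of $X$ and $Y$ swapped, obtaining the symmetric identification $I_{X\times Y}\cong p_X^{!}(I_X)\cong I_X\boxtimes I_Y$ (noting $I_X\boxtimes I_Y$ is canonically identified with $I_Y\boxtimes I_X$ via the swap), which provides a sanity check on naturality.
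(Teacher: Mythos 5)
Your argument is exactly the paper's: the paper's proof is the one-line chain $I_{X\times Y}\cong p_{Y}^{!}a_{Y}^{!}\Lambda\cong p_{Y}^{!}I_{Y}\cong I_{X}\boxtimes I_{Y}$, i.e.\ factoring the structural map through $p_Y$ and applying the preceding theorem with $N=I_Y$. Your additional check that $I_Y=a_Y^!(\Lambda)$ is locally constructible (via the finiteness theorem) is left implicit in the paper but is a correct and welcome verification of the theorem's hypothesis.
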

\begin{proof}
$I_{X\times Y}\cong p_{Y}^{!}a_{Y}^{!}\Lambda\cong p_{Y}^{!}I_{Y}\cong I_{X}\boxtimes I_{Y}$. 
\end{proof}
\begin{prop}[K\"unneth Formula with compact support]
\index{K\"unneth formula!for cohomology with compact support}
Let $f:U\to X$ and $g:V\to Y$ and let $A\in DM_{h}(U,\Lambda)$ and $B\in DM_{h}(V,\Lambda)$
then 
\[
f_{!}(M)\boxtimes g_{!}(N)\cong(f\times g)_{!}(M\boxtimes N).
\]
\end{prop}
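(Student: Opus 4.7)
The plan is to reduce the Künneth formula with compact support to iterated applications of the proper base change formula and the projection formula, by factoring $f\times g$ as $(f\times 1_Y)\circ (1_U\times g)$. Unfold the definitions: writing $p_X, p_Y$ for the projections from $X\times Y$, and $p'_U, p'_V$ for those from $U\times V$, the left hand side is $p_X^*f_!M\otimes p_Y^*g_!N$, while the right hand side is $(f\times g)_!(p'_U{}^*M\otimes p'_V{}^*N)$.

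First, apply proper base change to the cartesian squares
\[
\begin{tikzcd}
U\times Y\ar{r}{f\times 1}\ar{d}{p_U}&X\times Y\ar{d}{p_X}\\
U\ar{r}{f}&X
\end{tikzcd}
\qquad\text{and}\qquad
\begin{tikzcd}
X\times V\ar{r}{1\times g}\ar{d}{p_V}&X\times Y\ar{d}{p_Y}\\
V\ar{r}{g}&Y
\end{tikzcd}
\]
to rewrite $p_X^*f_!M\cong (f\times 1)_!\,p_U^*M$ and $p_Y^*g_!N\cong (1\times g)_!\,p_V^*N$. Then apply the projection formula for $(f\times 1)_!$ to absorb the second factor inside, turning the tensor product into
\[(f\times 1)_!\bigl(p_U^*M\otimes (f\times 1)^*(1\times g)_!\,p_V^*N\bigr).\]
Next, apply proper base change to the square
\[
\begin{tikzcd}
U\times V\ar{r}{1\times g}\ar{d}{f\times 1}&U\times Y\ar{d}{f\times 1}\\
X\times V\ar{r}{1\times g}&X\times Y
\end{tikzcd}
\]
to get $(f\times 1)^*(1\times g)_!\cong (1\times g)_!(f\times 1)^*$, and note that $(f\times 1)^*p_V^*N= p'_V{}^*N$ and $(1\times g)^*p_U^*M = p'_U{}^*M$. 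A second application of the projection formula, now for $(1\times g)_!$, pulls the last remaining factor inside, yielding
\[(f\times 1)_!\,(1\times g)_!\,(p'_U{}^*M\otimes p'_V{}^*N).\]

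Finally, use the canonical isomorphism $(f\times 1)_!(1\times g)_!\cong ((f\times 1)(1\times g))_!=(f\times g)_!$ arising from the functoriality of push-forward with compact support, to obtain $(f\times g)_!(M\boxtimes N)$, as desired. The only genuine difficulty is bookkeeping: one must check that the composite of all the instances of base change and projection formula used above agrees with the natural comparison map (defined, e.g., by adjunction from the evident map $(f\times g)^*\bigl(f_!M\boxtimes g_!N\bigr)\to M\boxtimes N$). This is a coherence question in the $\infty$-categorical setting, but it follows from the formalism of the six operations, since all the constituent isomorphisms are the canonical ones and their composite is compatible with adjunctions on both sides.
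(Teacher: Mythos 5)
Your argument is correct and is essentially the paper's own proof: the paper also factors $(f\times g)_!\cong(f\times 1)_!(1\times g)_!$ (using a symmetry argument to reduce to the case where one map is the identity) and then applies the base change formula for $(-)_!$ together with the projection formula, exactly as you do. You merely carry out both halves of the factorization explicitly instead of invoking the permutation of factors, which changes nothing of substance.
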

\begin{proof}
Since $(f\times g)_!\cong(f\times 1)_!(1\times g)_!$, we see that
it is sufficient to prove this when $f$ or $g$ is the identity.
Using the functorialities induced by permuting the factors
$X\times Y\cong Y\times X$, we see that it is sufficient to prove the case where
$g$ is the identity. We then have a Cartesian square
\[
\begin{tikzcd}
U\times Y\ar{r}{p_U}\ar{d}[swap]{f\times 1}&U\ar{d}{f}\\
X\times Y\ar{r}{p_X}&X
\end{tikzcd}
\]
inducing an isomorphism
\[(f\times 1)_!p_U^*\cong p_X^*f_!\, .\]
The projection formula also gives
\[(f\times 1)_!(p^*_U(M))\otimes p_Y^*(N)
\cong(f\times 1)_!(M\boxtimes N)\]
so that we get
$f_{!}(M)\boxtimes N\cong(f\times 1)_{!}(M\boxtimes N)$.
\end{proof}
\begin{cor}
For $X=Y$ we get $f_{!}(M)\otimes g_{!}(N)\cong\pi_{!}i^{*}(M\boxtimes N)$
where $\pi:U\times_{X}V\to X$ is the canonical map,
while $i:U\times_{X}V\to U\times V$ is the inclusion map. 
\end{cor}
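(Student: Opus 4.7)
The plan is to derive this corollary directly from the K\"unneth formula with compact support proved just above, together with the base change formula $v^* f_! \cong f'_! u^*$ applied to a suitable Cartesian square built from the diagonal of $X$.

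First, I would set up the diagonal Cartesian square
\[
\begin{tikzcd}
U\times_X V\ar{r}{i}\ar{d}[swap]{\pi}&U\times V\ar{d}{f\times g}\\
X\ar{r}{\Delta}&X\times X
\end{tikzcd}
\]
where $\Delta:X\to X\times X$ is the diagonal embedding and $\pi=f\circ pr_1=g\circ pr_2$. The reformulation of the proper base change formula recalled earlier (the theorem stating $v^*f_!\cong f'_!u^*$) applies to this square and gives the canonical isomorphism
\[\Delta^*(f\times g)_!\;\cong\;\pi_!\,i^*\, .\]

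Next, I would apply $\Delta^*$ to the K\"unneth isomorphism of the preceding proposition, $f_!(M)\boxtimes g_!(N)\cong (f\times g)_!(M\boxtimes N)$. By the base change isomorphism above, the right-hand side transforms to $\pi_!i^*(M\boxtimes N)$. For the left-hand side, I would use that $\Delta^*$ is symmetric monoidal together with the fact that $p_X\circ\Delta=p_Y\circ\Delta=1_X$, so that
\[\Delta^*\bigl(f_!(M)\boxtimes g_!(N)\bigr)
=\Delta^*p_X^*f_!(M)\otimes \Delta^*p_Y^*g_!(N)
\cong f_!(M)\otimes g_!(N)\, .\]
Combining these two identifications yields exactly the claimed isomorphism $f_!(M)\otimes g_!(N)\cong\pi_!\,i^*(M\boxtimes N)$.

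There is no real obstacle here: the only subtle point is ensuring that the base change formula of the cited theorem is applicable to $\Delta$, which is fine since $\Delta$ is a closed immersion (in particular separated of finite type), and that the two diagonal projections compose with $\Delta$ to the identity, so that no further naturality coherence is needed beyond what is already packaged in the symmetric monoidal structure of $\Delta^*$.
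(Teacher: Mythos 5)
Your argument is correct and is exactly the intended derivation: the paper states this corollary without proof because it follows immediately from the K\"unneth formula with compact support by base change along the diagonal, which is what you do. One tiny remark: the hypothesis in the cited base change theorem ($v^*f_!\cong f'_!u^*$) bears on the map whose $!$-pushforward is taken, here $f\times g$ (separated of finite type since all schemes in this section are separated of finite type over $k$), rather than on $\Delta$ as you suggest --- but both conditions hold, so nothing is affected.
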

\begin{rem}
For $f,g$ proper we get $f_{*}M\boxtimes f_{*}N\cong(f\times g)_{*}(M\boxtimes N)$. 
\end{rem}
\begin{thm}
For $M\in DM_{h,lc}(X,\Lambda)$ and $N\in DM_{h,lc}(Y,\Lambda)$
we have\label{abstract kunneth}
\[\mathbf{D}(M\boxtimes N)\cong
\mathbf{D}M\boxtimes\mathbf{D}N\, .\]
\end{thm}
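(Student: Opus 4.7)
My plan is to construct a canonical comparison map
\[\alpha_{M,N}:\mathbf{D}M\boxtimes\mathbf{D}N\longrightarrow\mathbf{D}(M\boxtimes N),\]
check that both sides are locally constructible, and then reduce via étale descent and a thick-subcategory argument to the case of proper-pushforward generators, where the conclusion follows by direct calculation using the Künneth formulas with compact support and the duality relations already established in the excerpt.

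To build $\alpha$, I would first form the pairing
\[(M\boxtimes N)\otimes(\mathbf{D}M\boxtimes\mathbf{D}N)\;\cong\;(M\otimes\mathbf{D}M)\boxtimes(N\otimes\mathbf{D}N)\;\longrightarrow\;I_X\boxtimes I_Y\;\cong\;I_{X\times Y},\]
obtained by rearranging tensor factors via the monoidality of $p_X^*$ and $p_Y^*$, composing with the evaluation maps $M\otimes\mathbf{D}M\to I_X$ and $N\otimes\mathbf{D}N\to I_Y$, and invoking the identification $I_X\boxtimes I_Y\cong I_{X\times Y}$ proved above. Its adjoint is the desired $\alpha_{M,N}$, which is natural in both arguments and compatible with pullback along étale morphisms: indeed, for $u:X'\to X$ étale we have $u^*I_X\cong I_{X'}$, so $u^*\mathbf{D}_X\cong\mathbf{D}_{X'}u^*$. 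Both source and target of $\alpha$ lie in $\DMhlc(X\times Y,\Lambda)$ by closure of $\DMhlc$ under the six operations and $\uHom$.

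Next, I would reduce to a manageable class of generators. Both $M\mapsto\mathbf{D}(M\boxtimes N)$ and $M\mapsto\mathbf{D}M\boxtimes\mathbf{D}N$ are contravariant exact functors that preserve retracts, and symmetrically in $N$. Since an equivalence of locally constructible motives may be tested after pullback along an étale surjection—and $\alpha$ is compatible with such pullbacks—we may first assume $M\in\DM_{h,c}(X,\Lambda)$ and $N\in\DM_{h,c}(Y,\Lambda)$. Applying Proposition~\ref{proper devissage} in each variable, it then suffices to treat the case $M=f_*\Lambda(n)$ and $N=g_*\Lambda(m)$ with $f:U\to X$ and $g:V\to Y$ proper and $n,m\in\ZZ$.

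In this final case the computation is direct. Since $f$, $g$ and $f\times g$ are proper, we have $f_*\cong f_!$, so the Künneth formula for $(-)_!$ gives $M\boxtimes N\cong(f\times g)_*\Lambda(n+m)$; combining $\mathbf{D}f_*\cong f_!\mathbf{D}$ with $\mathbf{D}\Lambda(n)\cong I_U(-n)$ yields $\mathbf{D}(M\boxtimes N)\cong(f\times g)_*I_{U\times V}(-n-m)$. Dually,
\[\mathbf{D}M\boxtimes\mathbf{D}N\;\cong\;f_!I_U(-n)\boxtimes g_!I_V(-m)\;\cong\;(f\times g)_!(I_U\boxtimes I_V)(-n-m)\;\cong\;(f\times g)_*I_{U\times V}(-n-m)\]
by the Künneth formula for $(-)_!$ together with $I_U\boxtimes I_V\cong I_{U\times V}$. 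The principal subtlety—and the step requiring the most care—is verifying that the isomorphism so obtained actually agrees with the canonical map $\alpha_{M,N}$; this amounts to tracing the coherences among the duality swaps, Künneth identifications, and evaluation pairings involved, but each individual identification is supplied by results already in the excerpt.
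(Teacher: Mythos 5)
Your proof is correct and follows essentially the same route as the paper: reduce, via Proposition \ref{proper devissage}, to generators $M=f_*\Lambda(n)$, $N=g_*\Lambda(m)$ with $f,g$ proper, and conclude by the K\"unneth formula for proper push-forwards, the exchange $\mathbf{D}f_*\cong f_!\mathbf{D}$, and $I_U\boxtimes I_V\cong I_{U\times V}$. Your additional scaffolding (the explicit natural comparison map $\alpha_{M,N}$, the \'etale-descent step from locally constructible to constructible, and the handling of Tate twists) is exactly what the paper leaves implicit and is needed to make the d\'evissage legitimate, so it is a welcome refinement rather than a different argument.
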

\begin{proof}
We may assume that $M=f_{*}\Lambda$ and $N=g_{*}\Lambda$ with $f,g$
proper. Then 
\begin{align*}
\mathbf{D}f_{*}\Lambda\boxtimes\mathbf{D}g_{*}\Lambda
&\cong f_{*}I_{U}\boxtimes g_{*}I_{V}\\
&\cong(f\times g)_{*}(I_{U}\boxtimes I_{V})\\
&\cong(f\times g)_{*}I_{U\times V}\\
&\cong\mathbf{D}((f\times g)_{*}\Lambda)\\
&\cong\mathbf{D}(f_{*}\Lambda\boxtimes g_{*}\Lambda)\\
&\cong\mathbf{D}(M\boxtimes N).
\end{align*}
Hence $\mathbf{D}(M\boxtimes N)\cong\mathbf{D}M\boxtimes\mathbf{D}N$.
\end{proof}
\begin{cor}
$\mathbf{D}M\boxtimes N\cong \uHom(p_{X}^{*}M,p_{Y}^{!}N)$ for $M$ and $N$
locally constructible.
\end{cor}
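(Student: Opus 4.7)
The plan is to derive the desired isomorphism directly from the formula already recorded in the previous remark, namely
\[M\boxtimes\mathbf{D}N \cong \mathbf{D}\uHom(p_X^*M, p_Y^! N),\]
by applying $\mathbf{D}$ to both sides and combining two ingredients: Theorem \ref{abstract kunneth} (the K\"unneth formula for duality) on the left, and biduality on the right. The main point is to verify that both sides of the display are locally constructible, so that biduality applies cleanly.

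First I would note that the left-hand side becomes $\mathbf{D}M\boxtimes N$. Indeed, since $N$ is locally constructible, so is $\mathbf{D}N$, and Theorem \ref{abstract kunneth} applies with $M$ and $\mathbf{D}N$ in place of $M$ and $N$:
\[\mathbf{D}(M\boxtimes\mathbf{D}N)\cong \mathbf{D}M\boxtimes\mathbf{D}\mathbf{D}N\cong \mathbf{D}M\boxtimes N,\]
using biduality for the locally constructible object $N$.

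Next I would argue that the right-hand side becomes $\uHom(p_X^*M, p_Y^! N)$. For this I need biduality on $\uHom(p_X^*M, p_Y^! N)$, which reduces to showing this internal Hom is locally constructible on $X\times Y$. Since $k$ is a field and all schemes involved are separated of finite type over $k$, the scheme $X\times Y$ is quasi-excellent noetherian of finite dimension, so the finiteness theorem of Cisinski--D\'eglise applies: $p_X^*M$ is locally constructible (as $p_X^*$ preserves this property), $p_Y^! N$ is locally constructible (as $p_Y^!$ does too), and $\uHom$ of two locally constructible motives is again locally constructible. Hence biduality gives $\mathbf{D}\mathbf{D}\uHom(p_X^*M, p_Y^! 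N)\cong \uHom(p_X^*M, p_Y^! N)$.

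Putting these two observations together yields
\[\mathbf{D}M\boxtimes N \cong \mathbf{D}(M\boxtimes\mathbf{D}N) \cong \mathbf{D}\mathbf{D}\uHom(p_X^*M, p_Y^! N) \cong \uHom(p_X^*M, p_Y^! N),\]
which is the claim. There is no substantive obstacle here once the K\"unneth duality theorem is in hand; the only point worth flagging is the bookkeeping of local constructibility, and more precisely that $p_Y^!$ preserves it --- this is where we use that the six operations have been shown to preserve locally constructible $\h$-motives in the quasi-excellent setting.
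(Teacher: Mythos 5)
Your proof is correct and follows essentially the route the paper intends for this corollary: combine the identity $M\boxtimes\mathbf{D}N\cong\mathbf{D}\uHom(p_X^*M,p_Y^!N)$ from the earlier remark with Theorem \ref{abstract kunneth} and biduality, the latter being legitimate because $p_X^*$, $p_Y^!$ and $\uHom$ preserve local constructibility over the quasi-excellent scheme $X\times Y$. Your explicit bookkeeping of local constructibility is exactly the point the paper leaves implicit, so there is nothing to add.
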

\begin{cor}[K\"unneth Formula in cohomology]
\index{K\"unneth formula!for ordinary cohomology}
Let us consider $f:U\to X$ and $g:V\to Y$ together with $M\in DM_{h}(U,\Lambda)$
and $N\in DM_{h}(V,\Lambda)$. Then 
\[
f_{*}(M)\boxtimes g_{*}(N)\cong(f\times g)_{*}(M\boxtimes N).
\]
\end{cor}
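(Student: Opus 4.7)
The plan is to deduce the K\"unneth formula in cohomology from the already established K\"unneth formula with compact support by running through Verdier duality. Assume first that $M$ and $N$ are locally constructible, so that biduality and Theorem \ref{abstract kunneth} are available. Two ingredients do all the work: the exchange formulas $\mathbf{D}f_*\cong f_!\mathbf{D}$ and $\mathbf{D}f_!\cong f_*\mathbf{D}$ (from the corollary to Grothendieck--Verdier duality in the preceding section), and the identity $\mathbf{D}(A\boxtimes B)\cong\mathbf{D}A\boxtimes\mathbf{D}B$ of Theorem \ref{abstract kunneth}.

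Concretely, I would write down the following chain of canonical isomorphisms:
\begin{align*}
(f\times g)_*(M\boxtimes N)
&\cong\mathbf{D}\mathbf{D}\bigl((f\times g)_*(M\boxtimes N)\bigr)\\
&\cong\mathbf{D}\bigl((f\times g)_!\,\mathbf{D}(M\boxtimes N)\bigr)\\
&\cong\mathbf{D}\bigl((f\times g)_!(\mathbf{D}M\boxtimes\mathbf{D}N)\bigr)\\
&\cong\mathbf{D}\bigl(f_!\mathbf{D}M\boxtimes g_!\mathbf{D}N\bigr)\\
&\cong\mathbf{D}f_!\mathbf{D}M\boxtimes\mathbf{D}g_!\mathbf{D}N\\
&\cong f_*M\boxtimes g_*N\, .
\end{align*}
The first line is biduality, valid because $(f\times g)_*(M\boxtimes N)$ is locally constructible by stability of local constructibility under the six operations and under $\boxtimes=p_X^*(-)\otimes p_Y^*(-)$. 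The second uses $\mathbf{D}f_*\cong f_!\mathbf{D}$ combined with biduality on $M\boxtimes N$. The third and fifth steps invoke Theorem \ref{abstract kunneth}. The fourth step is the compact-support K\"unneth formula established just above. The last step uses $\mathbf{D}f_!\cong f_*\mathbf{D}$ together with biduality on $M$ and on $N$.

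The main obstacle is essentially bookkeeping: at each application of biduality or of Theorem \ref{abstract kunneth} one must know that the motives in question are locally constructible. This is automatic since local constructibility on separated $k$-schemes of finite type is preserved by $f^*$, $f_*$, $f_!$, $f^!$, $\otimes$ and $\uHom$, and therefore by $\boxtimes$ as well. If one wishes to allow arbitrary $M$ and $N$ as in the statement, one extends the resulting isomorphism by writing an arbitrary $h$-motive as a filtered colimit of locally constructible ones and invoking the same colimit-compatibility principle used to promote the first theorem of this section from locally constructible motives to all motives.
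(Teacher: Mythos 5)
Your proposal is correct and follows essentially the same route as the paper: reduce to (locally) constructible $M$, $N$ by colimit-preservation of the functors involved, then run the chain biduality $\to$ exchange of $\mathbf{D}$ with $*$/$!$ $\to$ Theorem \ref{abstract kunneth} $\to$ K\"unneth with compact support, just read in the opposite direction. The paper makes the colimit step explicit by noting that $p_*$ commutes with small sums (hence colimits, by exactness) via \cite[Prop.~5.5.10]{CD4}, which is the only point you leave implicit.
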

\begin{proof}
Functors of the form $p_*$, for $p$ separated of finite type,
commute with small colimits: since they are exact, it is sufficient to
prove that they commute with small sums, which follows from
\cite[Prop.~5.5.10]{CD4}. Therefore it is sufficient to prove this when $M$
and $N$ are (locally) constructible. In this case, the series of isomorphisms
\begin{align*}
f_*(M)\boxtimes g_*(N)
&\cong\mathbf{D}\mathbf{D}(f_*(M)\boxtimes g_*(N))\\
&\cong\mathbf{D}(\mathbf{D}f_*(M)\boxtimes \mathbf{D}g_*(N))\\
&\cong\mathbf{D}(f_!\mathbf{D}M\boxtimes g_!\mathbf{D}N)\\
&\cong\mathbf{D}((f\times g)_!(\mathbf{D}M\boxtimes\mathbf{D}N))\\
&\cong\mathbf{D}((f\times g)_!(\mathbf{D}(M\boxtimes N))\\
&\cong\mathbf{D}\mathbf{D}((f\times g)_*(M\boxtimes N))\\
&\cong(f\times g)_*(M\boxtimes N)
\end{align*}
proves the claim.
\end{proof}
\begin{rem}
In the situation of the previous corollary, if $X=Y= Spec(k)$,
then also $X\times Y= Spec (k)$, so that the exterior tensor product $\boxtimes$
in $\DM_h(X\times Y,\Lambda)$ simply corresponds to the usual tensor product $\otimes$
on $\DM_h(k,\Lambda)$. We thus get a K\"unneth formula of the form
\[(a_U)_*(M)\otimes (a_V)_*(N)\cong(a_U\times a_V)_*(M\boxtimes N)\, .\]
\end{rem}
\begin{cor}
Let us consider $f:U\to X$ and $g:V\to Y$, together with $M\in DM_{h,lc}(X,\Lambda)$
and $N\in DM_{h,lc}(Y,\Lambda)$. Then\label{encore kunneth}
\[
f^{!}(M)\boxtimes g^{!}(N)\cong(f\times g)^{!}(M\boxtimes N).
\]
\end{cor}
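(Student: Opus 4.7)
The plan is to reduce this to Theorem \ref{abstract kunneth} (the K\"unneth formula for Verdier duals) combined with the compatibility $\mathbf{D}f^{*}\cong f^{!}\mathbf{D}$ from the corollary on Grothendieck-Verdier duality, by dualizing the desired isomorphism twice. All objects involved will be locally constructible, so biduality ($M\cong \mathbf{D}\mathbf{D}M$) can be applied at will.

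First, I would record the preliminary fact that $*$-pullback is compatible with exterior products: for any objects $A$ on $X$ and $B$ on $Y$, one has $(f\times g)^{*}(A\boxtimes B)\cong f^{*}A\boxtimes g^{*}B$. This is a direct consequence of the definition $A\boxtimes B=p_{X}^{*}A\otimes p_{Y}^{*}B$, the fact that $p_{X}\circ(f\times g)=f\circ p_{U}$ and $p_{Y}\circ(f\times g)=g\circ p_{V}$, and the symmetric monoidality of $(f\times g)^{*}$.

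Next, I would verify that all the motives in play are locally constructible, so that Theorem~\ref{abstract kunneth} applies. By the Cisinski-D\'eglise finiteness theorem, the operations $f^{!}$, $g^{!}$ preserve local constructibility, so $f^{!}M$ and $g^{!}N$ are locally constructible; similarly $\mathbf{D}M$, $\mathbf{D}N$, and $M\boxtimes N=p_{X}^{*}M\otimes p_{Y}^{*}N$ are locally constructible.

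Finally, I would run the main computation: applying biduality and then Theorem~\ref{abstract kunneth} to $f^{!}(M)\boxtimes g^{!}(N)$ gives
\begin{align*}
f^{!}(M)\boxtimes g^{!}(N) & \cong \mathbf{D}\mathbf{D}\bigl(f^{!}(M)\boxtimes g^{!}(N)\bigr)\\
 & \cong \mathbf{D}\bigl(\mathbf{D}f^{!}(M)\boxtimes\mathbf{D}g^{!}(N)\bigr)\\
 & \cong \mathbf{D}\bigl(f^{*}\mathbf{D}M\boxtimes g^{*}\mathbf{D}N\bigr)\\
 & \cong \mathbf{D}\bigl((f\times g)^{*}(\mathbf{D}M\boxtimes\mathbf{D}N)\bigr)\\
 & \cong \mathbf{D}\bigl((f\times g)^{*}\mathbf{D}(M\boxtimes N)\bigr)\\
 & \cong (f\times g)^{!}\mathbf{D}\mathbf{D}(M\boxtimes N)\\
 & \cong (f\times g)^{!}(M\boxtimes N),
\end{align*}
using successively the isomorphism $\mathbf{D}f^{!}\cong f^{*}\mathbf{D}$, the preliminary compatibility of $(f\times g)^{*}$ with $\boxtimes$, Theorem~\ref{abstract kunneth} applied in the reverse direction, and finally $\mathbf{D}(f\times g)^{*}\cong (f\times g)^{!}\mathbf{D}$ followed by biduality on $M\boxtimes N$.

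There is no real obstacle here: the argument is essentially formal once Grothendieck-Verdier duality, the K\"unneth formula for duals, and preservation of local constructibility by the six operations are in hand. The only minor care needed is to check at each step that duality can be applied, which is why I would spell out the local constructibility verification before the main calculation.
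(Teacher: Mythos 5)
Your argument is correct and is essentially the paper's own proof: the paper reduces to the obvious compatibility of $(f\times g)^{*}$ with $\boxtimes$ by using that $\mathbf{D}$ exchanges $*$'s and $!$'s together with Theorem \ref{abstract kunneth}, which is exactly the chain of isomorphisms you spell out. Your explicit check of local constructibility (so that biduality and the duality exchange apply at each step) is a sound elaboration of what the paper leaves implicit.
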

\begin{proof}
For any separated morphism of finite $a$, the functor $a^!$ commutes with small
colimits (since, they are exact, it is sufficient to prove that they commutes with
small sums, which is asserted by \cite[Cor.~5.5.14]{CD4}).
It is thus sufficient to prove this
formula for constructible motivic sheaves.
Using the fact that the Verdier duality functor $\mathbf{D}$ exchanges
$*$'s and $!$'s as well
as Theorem \ref{abstract kunneth}, we see that it is sufficient to prove
the analogous formula obtained by considering functors of the form $(f\times g)^*$
and $f^*$, $g^*$, which is obvious.
\end{proof}
\begin{cor}
Let $X$ be a scheme together with $M,N\in\DM_{h,lc}(X,\Lambda)$. If we
denote by $\Delta:X\to X\times X$ the diagonal map, then\label{fancy hom}
\[\Delta^!(\mathbf{D}M\boxtimes N)\cong\uHom(M,N)\, .\]
\end{cor}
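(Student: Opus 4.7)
The plan is to reduce this to one application of the identity $f^!\uHom(A,B)\cong \uHom(f^*A,f^!B)$ from the exercise after Proposition \ref{internal Hom and duality}, combined with the formula $\mathbf{D}M\boxtimes N\cong \uHom(p_X^*M,p_Y^!N)$ which is recorded as a corollary of Theorem \ref{abstract kunneth}. No further dévissage should be needed.

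Concretely, first I would rewrite the exterior product using that corollary, so that
\[\Delta^!(\mathbf{D}M\boxtimes N)\cong \Delta^!\uHom(p_X^*M,p_Y^!N),\]
where $p_X,p_Y:X\times X\to X$ denote the two projections. Then I would apply the compatibility of $f^!$ with internal Hom to the closed immersion $\Delta$, obtaining
\[\Delta^!\uHom(p_X^*M,p_Y^!N)\cong \uHom(\Delta^*p_X^*M,\Delta^!p_Y^!N).\]
Finally, since $p_X\circ\Delta=\mathrm{id}_X=p_Y\circ\Delta$, the functorialities $\Delta^*p_X^*\cong(p_X\Delta)^*$ and $\Delta^!p_Y^!\cong(p_Y\Delta)^!$ are both the identity on $\DM_{h,lc}(X,\Lambda)$, which yields the desired isomorphism.

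The only subtle step is the appeal to $f^!\uHom(A,B)\cong \uHom(f^*A,f^!B)$: this was stated in the exercise without a proof in the text, but for $\Delta$ it is a consequence of the more standard identity $\uHom(f_!(-),-)\cong f_*\uHom(-,f^!(-))$ by adjunction, evaluated against an arbitrary $C$: one has
\[\Hom(C,f^!\uHom(A,B))\cong \Hom(f_!(C\otimes f^*A),B)\cong \Hom(C,\uHom(f^*A,f^!B)),\]
using the projection formula $f_!(C\otimes f^*A)\cong f_!(C)\otimes A$. So the real content is packaged in the exterior-product corollary and the $\uHom$-pullback identity; everything else is bookkeeping with compositions of $*$- and $!$-functors along $\Delta$ and the projections.
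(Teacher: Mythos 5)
Your argument is correct, and it packages the proof differently from the paper. The paper's own proof is a direct duality computation: it writes $\Delta^!\cong\mathbf{D}\Delta^*\mathbf{D}$ on locally constructible objects, applies Theorem \ref{abstract kunneth} to get $\mathbf{D}(\mathbf{D}M\boxtimes N)\cong\mathbf{D}\mathbf{D}M\boxtimes\mathbf{D}N$, uses biduality and $\Delta^*(A\boxtimes B)\cong A\otimes\ B$, and concludes with Proposition \ref{internal Hom and duality} in the form $\mathbf{D}(M\otimes\mathbf{D}N)\cong\uHom(M,N)$. You instead start from the corollary $\mathbf{D}M\boxtimes N\cong\uHom(p_X^*M,p_Y^!N)$ and then only use the purely formal compatibility $f^!\uHom(A,B)\cong\uHom(f^*A,f^!B)$ together with $p_X\Delta=\mathrm{id}=p_Y\Delta$; your adjunction-plus-projection-formula verification of that compatibility is exactly right (and note it needs no hypothesis on $A,B$ and works for any separated morphism of finite type, not just $\Delta$). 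The two routes rest on the same underlying input -- the constructibility of $M$ and $N$ enters your proof only through the cited corollary, which itself is deduced from Theorem \ref{abstract kunneth} and biduality, i.e.\ from the same ingredients the paper manipulates explicitly. What your version buys is a clean separation between the one genuinely non-formal step (the K\"unneth/duality description of $\mathbf{D}M\boxtimes N$) and bookkeeping that holds for arbitrary motives; what the paper's version buys is brevity and the explicit display of how $\mathbf{D}$ exchanges $\Delta^!$ with $\Delta^*$, which is the pattern reused in the surrounding corollaries.
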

We have indeed:
\begin{align*}
\Delta^!(\mathbf{D}M\boxtimes N)
&\cong\mathbf{D}\Delta^*\mathbf{D}(\mathbf{D}M\boxtimes N)\\
&\cong\mathbf{D}\Delta^*(\mathbf{D}\mathbf{D}M\boxtimes\mathbf{D}N)\\
&\cong\mathbf{D}(M\otimes\mathbf{D}N)\\
&\cong\uHom(M,N)\, .
\end{align*}
\subsection{Grothendieck-Lefschetz Formula. }
As in the previous paragraph, we assume that a ground field $k$ is given, and all schemes are assumed to be separated of finite type over $k$.
\begin{defn}
Let $X$ and $Y$ be schemes, together with $M\in\DM_{h,lc}(X,\Lambda)$
and $N\in \DM_{h,lc}(Y,\Lambda)$.
A \emph{cohomological correspondence}\index{correspondence!cohomological}
from $(X,M)$ to $(Y,N)$ is a triple of the form
$(C,c,\alpha)$, where $(C,c)$ determines the commutative diagram
\[
\begin{tikzcd}
C\ar[bend left]{rrd}{c_{2}}\ar[bend right]{ddr}[swap]{c_{1}}\ar{rd}{c}&&\\
 & X\times Y\ar{r}{p_{Y}}\ar{d}{p_{X}} & Y\\
 & X
\end{tikzcd}
\]
together with
a map $\alpha:c_{1}^{*}M\to c_{2}^{!}N$ in $\DM_{h}(C,\Lambda)$. 
\end{defn}
\begin{rem}
We have:
\[
\uHom(c_1^*M,c_2^!N)\cong
\uHom(c^{*}p_{X}^{*}M,c^{!}p_{Y}^{!}N)\cong c^{!}\uHom(p_{X}^{*}M,p_{Y}^{!}N)
\cong c^{!}(\mathbf{D}M\boxtimes N).
\]
Therefore, one can see $\alpha$ as a map of the form
\[\alpha:\Lambda\to c^!(\mathbf{D}M\boxtimes N)\, .\]
\end{rem}
\begin{rem}
In the case where $c_2$ is proper, a cohomological correspondence
induces a morphism in cohomology as follows. Let $a:X\to Spec(k)$
and $b:Y\to Spec(k)$ be the structural maps. We e have
$a c_1=bc_2$ and a co-unit map $(c_2)_*c^!_2(N)\to N$, whence
a map:
\[a_*M\to a_*(c_1)_*c^*_1M\xrightarrow{a_*(c_1)_*\alpha}
a_*(c_1)_*c^!_2N\cong b_*(c_2)_*c^!_2N\to b_*N\, .\]
In particular, one can consider the trace of such an induced
map. By duality, in the case where $c_1$ is proper, we get an induced map
in cohomology with compact support $b_!N\to a_!M$.
\end{rem}
\begin{paragr}
We observe that cohomological correspondences can be multiplied:
given another cohomological correspondence $(C',c',\alpha')$
from $(X',M')$ to $(Y',N')$, we define
a new correspondence from $(X\times X',M\boxtimes M')$
to $(Y\times Y',N\boxtimes N')$ with
\[(C,c,\alpha)\otimes(C',c',\alpha')=
(C\times C',c\times c',\alpha\boxtimes\alpha')\]
where $\alpha\boxtimes\alpha'$ is defined using the functoriality of the
$\boxtimes$ operation together with the canonical K\"unneth isomorphisms seen in the
previous paragraph:
\[\Lambda\cong\Lambda\boxtimes\Lambda
\xrightarrow{\alpha\boxtimes\alpha'}
c^!(\mathbf{D}M\boxtimes N)\boxtimes c^{\prime\, !}(\mathbf{D}M'\boxtimes N')
\cong(c\times c')^!(\mathbf{D}(M\boxtimes M')\boxtimes (N\boxtimes N'))\, .\]
\end{paragr}
\begin{paragr}
Correspondences can also be composed. Let $(C,c,\alpha)$ be a correspondence
from $(X,M)$ to $(Y,N)$ as above,
and let $(D,d,\beta)$ be a correspondence from $(Y,N)$
to $(Z,P)$, with $(D,d)$ corresponding
to a commutative diagram of the form below,
and $\beta:\Lambda\to d^!(\mathbf{D}N\boxtimes P)$
a map in in $\DM_{h}(D,\Lambda)$.
\[
\begin{tikzcd}
D\ar[bend left]{rrd}{d_{2}}\ar[bend right]{ddr}[swap]{d_{1}}\ar{rd}{d}&&\\
 & Y\times Z\ar{r}{p_{Z}}\ar{d}{p_{Y}} & Z\\
 & Y
\end{tikzcd}
\]
We form the following pullback square
\[\begin{tikzcd}
E\ar{r}{\lambda}\ar{d}{\mu}&D\ar{d}{d_1}\\
C\ar{r}{c_2}&Y
\end{tikzcd}\]
as well as the commutative diagram
\[
\begin{tikzcd}
E\ar[bend left]{rrd}{e_{2}}\ar[bend right]{ddr}[swap]{e_{1}}\ar{rd}{e}&&\\
 & X\times Z\ar{r}{p_{Z}}\ar{d}{p_{X}} & Z\\
 & X
\end{tikzcd}
\]
in which $e_1=c_1\mu$ and $e_2=d_2\lambda$.
We then form $\alpha\boxtimes\beta$:
\[\Lambda\cong\Lambda\boxtimes\Lambda\xrightarrow{\alpha\boxtimes\beta}
c^!(\mathbf{D}M\boxtimes N)\boxtimes d^!(\mathbf{D}N\boxtimes P)
\cong
(c\times d)^!((\mathbf{D}M\boxtimes N)\boxtimes(\mathbf{D}N\boxtimes P))\, .\]
Let $f=d_1\lambda=c_2\mu:E\to Y$ be the canonical map, and
$\Delta:Y\to Y\times Y$ be the diagonal. We have the following
Cartesian square
\[
\begin{tikzcd}
E\ar{r}{(\mu,\lambda)}\ar{d}[swap]{\varphi=(e_1,f,e_2)}
&C\times D\ar{d}{c\times d}\\
X\times Y\times Z\ar{r}{1\times\Delta\times 1}&X\times Y\times Y\times Z
\end{tikzcd}
\]
which induces an isomorphism (proper base change formula)
\[\varphi_!(\mu,\lambda)^*
\cong(1\times\Delta\times 1)^*(c\times d)_!\, .\]
In particular, it induces a canonical map
\[\kappa:(\mu,\lambda)^*(c\times d)^!\to \varphi^!(1\times\Delta\times 1)^*\]
corresponding by adjunction to the composite
\[\varphi_!(\mu,\lambda)^*(c\times d)^!
\cong(1\times\Delta\times 1)^*(c\times d)_!(c\times d)^!
\xrightarrow{\text{co-unit}} (1\times\Delta\times 1)^*\, .\]
Let $\pi:X\times Y\times Z\to X\times Z$ be the canonical
projection. There is a canonical map
\[\varepsilon:(1\times\Delta\times 1)^*
(\mathbf{D}M\boxtimes N\boxtimes\mathbf{D}N\boxtimes P)
\to\pi^!(\mathbf{D}M\boxtimes P)\]
induced by the evaluation map
\[N\otimes\mathbf{D}N\to I_Y\]
together with the canonical identifications coming
from appropriate K\"unneth formulas:
\begin{align*}
(1\times\Delta\times 1)^*
(\mathbf{D}M\boxtimes (N\boxtimes\mathbf{D}N)\boxtimes P)
&\cong \mathbf{D}M\boxtimes(N\otimes\mathbf{D}N)\boxtimes P\\
\mathbf{D}M\boxtimes I_Y\boxtimes P
&\cong \pi^!(\mathbf{D}M\boxtimes P)\, .
\end{align*}
We observe that $e=\pi\varphi$, so that $e^!\cong\varphi^!\pi^!$.
\end{paragr}
\begin{defn}
With the notations above, composing $(\mu,\lambda)^*(\alpha\boxtimes\beta)$
with the maps $\kappa$ and $\varepsilon$ defines
the map
\[
\beta\circ\alpha:\Lambda\cong(\mu,\lambda)^*\Lambda
\to\varphi^!\pi^!(\mathbf{D}M\boxtimes P)
\cong e^!(\mathbf{D}M\boxtimes P)\, .
\]
We define finally define the
\emph{composition of the correspondences}
$(C,c,\alpha)$ and $(D,d,\beta)$
as\label{compose corr}
\[(D,d,\beta)\circ(C,c,\alpha)=(E,e,\beta\circ\alpha)\, .\]
\end{defn}
\begin{paragr}
This composition is only well defined up to isomorphism
(since some choice of pull-back appears), but it is
associative and unital up to isomorphism. The unit
cohomological correspondence of $(X,M)$ is given by
\[1_{(X,M)}=(X,\Delta,1_M)\]
where $\Delta:X\to X\times X$ is the diagonal map and
\[1_M:\Lambda\to\Delta^!(\mathbf{D}M\boxtimes M)
\cong\uHom(M,M)\]
is the canonical unit map.
In a suitable sense, this defines a symmetric monoidal
bicategory, where the tensor product is defined as
\[(X,M)\otimes(Y,N)=(X\times Y,M\boxtimes N)\]
while the unit object if $(Spec(k),\Lambda)$.
\end{paragr}
\begin{paragr}
To make this a little bit more precise, we must speak of
the category of cohomological correspondences
from $(X,M)$ to  $(Y,N)$, in order to be able to express
the fact that all the contructions and all the
coherence isomorphisms (expressing the associativity and so on)
are functorial. If $(C,c,\alpha)$ and $(D,d,\beta)$ both
are correspondences from $(X,M)$ to  $(Y,N)$, a map
\[\sigma:(C,c,\alpha)\to(D,d,\beta)\]
is a pair $\sigma=(f,h)$, where $f:C\to D$ is
a proper morphism such that $df=c$, while $h$ is a homotopy
\[h:f_!(\alpha)\cong \beta\]
where $f_!(\alpha)$ is the map defined as
\[f_!(\alpha):\Lambda\xrightarrow{\text{unit}} f_*\Lambda\xrightarrow{f_*\alpha}
f_*c^!(\mathbf{D}M\boxtimes N)
\cong f_*f^!d^!(\mathbf{D}M\boxtimes N)
\xrightarrow{\text{co-unit}} d^!(\mathbf{D}M\boxtimes N) .\]
This defines the symmetric monoidal bicategory $\corr(k)$
whose objects are the pairs $(X,M)$ formed of a $k$-scheme $X$
equipped with a $\Lambda$-linear locally constructible $h$-motive $M$.
In particular, for each pair of pairs $(X,M)$ and $(Y,N)$,
there is the category of
cohomological correspondences from $(X,M)$ to $(Y,N)$,
denoted by $\corr(X,M;Y,N)$
(in this paragraph, unless we make it explicit
otherwise, we will only need the $1$-category of such
things, considering maps $\alpha$ as above in the homotopy
category of $h$-motives).
\end{paragr}
\begin{prop}
All the objects of $\corr(k)$ are dualizable.
Moreover, the dual of a pair $(X,M)$ is $(X,\mathbf{D}M)$.\label{cohdual}
\end{prop}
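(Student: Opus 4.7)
The plan is to exhibit explicit unit and counit correspondences witnessing the desired duality, and then verify the two zig-zag identities.

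For the unit $\eta : (\spec{k}, \Lambda) \to (X, M) \otimes (X, \mathbf{D} M) = (X \times X, M \boxtimes \mathbf{D} M)$, I take the span $(\spec k \xleftarrow{a_X} X \xrightarrow{\Delta} X \times X)$ equipped with the map
\[
\Lambda \longrightarrow \Delta^!(M \boxtimes \mathbf{D} M) \cong \uHom(\mathbf{D} M, \mathbf{D} M)
\]
given by the canonical unit (the identity of $\mathbf{D}M$); here the identification uses Corollary \ref{fancy hom} after rewriting $M \cong \mathbf{D}\mathbf{D}M$, which is legitimate since $M$ is locally constructible. For the counit $\epsilon : (X, \mathbf{D} M) \otimes (X, M) \to (\spec k, \Lambda)$, I take the span $(X \times X \xleftarrow{\Delta} X \xrightarrow{a_X} \spec k)$ together with the internal evaluation map
\[
\mathbf{D} M \otimes M \longrightarrow I_X = a_X^! \Lambda
\]
coming from $\mathbf{D} M = \uHom(M, I_X)$.

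Next I form the two composite correspondences $(1_{(X,M)} \otimes \epsilon) \circ (\eta \otimes 1_{(X,M)})$ and $(\epsilon \otimes 1_{(X,\mathbf{D} M)}) \circ (1_{(X,\mathbf{D} M)} \otimes \eta)$ using Definition \ref{compose corr}. Unwinding the iterated fiber product of spans, in each case the underlying scheme collapses to $X$ (the only solution of the equations $x=y=z$ coming from diagonal pullbacks), and the combined span identifies with $\Delta : X \to X \times X$, i.e.\ with the underlying span of the identity correspondence $1_{(X,M)}$, respectively $1_{(X,\mathbf{D}M)}$.

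The real content is then the $\alpha$-datum of the composite. Tracing through Definition \ref{compose corr}, this datum is assembled from the unit in $\eta$, the evaluation in $\epsilon$, the identity data $1_M$ (used twice), and the K\"unneth isomorphisms of Theorem \ref{abstract kunneth} and Corollary \ref{encore kunneth} combined with Corollary \ref{fancy hom}. After transposing through all these identifications, the composite map $\Lambda \to \Delta^!(\mathbf{D} M \boxtimes M) \cong \uHom(M, M)$ is, by naturality of the K\"unneth formulas, precisely the image under $\uHom(M,-)$ of the triangle identity composite for the internal duality $M \dashv\vdash \mathbf{D} M$ in $\DM_h(X, \Lambda)$, and hence agrees with $1_M$. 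The second zig-zag is handled by the symmetric argument swapping the roles of $M$ and $\mathbf{D}M$.

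The main obstacle is coherence bookkeeping: Definition \ref{compose corr} is not strict but involves a choice of pullback, a proper-base-change isomorphism, and the K\"unneth-style evaluation map $\varepsilon$, all of which must be juggled simultaneously. No new input beyond Theorem \ref{abstract kunneth}, Corollaries \ref{encore kunneth} and \ref{fancy hom}, and the biduality theorem of the previous section is needed, but careful diagram chasing through the six-functor formalism is unavoidable; the payoff is that the zig-zag identity in $\corr(k)$ is reduced to the internal triangle identity for $\mathbf{D}M = \uHom(M, I_X)$ in $\DM_h(X,\Lambda)$, which holds by construction.
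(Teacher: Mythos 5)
Your route---writing down an explicit coevaluation and evaluation correspondence and checking the zig-zag identities---is a legitimate way to establish dualizability, but it is genuinely different from, and much heavier than, the paper's argument, and as written it has a gap exactly where all the content lies. The paper works with its own definition of dualizability (that $(Y,N)\otimes-$ admits $(Y,\mathbf{D}N)\otimes-$ as a right adjoint) and simply observes that a cohomological correspondence from $(X\times Y,M\boxtimes N)$ to $(Z,P)$ and a cohomological correspondence from $(X,M)$ to $(Y\times Z,\mathbf{D}N\boxtimes P)$ are \emph{literally the same data}: a map $c:C\to X\times Y\times Z$ together with a section of $c^!$ of the relevant coefficient, the two coefficients being identified by the K\"unneth formula $\mathbf{D}(M\boxtimes N)\boxtimes P\cong\mathbf{D}M\boxtimes(\mathbf{D}N\boxtimes P)$ from Theorem \ref{abstract kunneth}. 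No unit, counit, pullback of spans, or triangle identities ever appear.

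In your version the zig-zag verification \emph{is} the proof, and you do not carry it out: you assert that, after unwinding Definition \ref{compose corr}, the composite datum is ``the image under $\uHom(M,-)$ of the triangle identity composite for the internal duality'' between $M$ and $\mathbf{D}M$ in $\DM_h(X,\Lambda)$, ``which holds by construction''. But there is no such internal duality: $\mathbf{D}M=\uHom(M,I_X)$ is the Verdier dual, not a monoidal dual, and a locally constructible $M$ is in general \emph{not} dualizable in $\DM_h(X,\Lambda)$ (by Theorem \ref{thm:generic rigidity} it is only so generically)---this failure is precisely why one passes to $\corr(k)$ in the first place. What your zig-zag actually requires is the concrete compatibility that, under the identifications $\Delta^!(\mathbf{D}M\boxtimes N)\cong\uHom(M,N)$ of Corollary \ref{fancy hom}, the biduality isomorphism $M\cong\mathbf{D}\mathbf{D}M$, and the K\"unneth isomorphisms, the contraction step $\varepsilon$ and base-change map $\kappa$ of Definition \ref{compose corr}, applied to your coevaluation and evaluation data, return the unit $1_M:\Lambda\to\uHom(M,M)$. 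That is a genuine (if routine) diagram chase through the six-functor formalism, resting on the biduality theorem and the K\"unneth compatibilities of the evaluation pairing; it is true, but it does not hold ``by construction'' and must either be done explicitly or avoided altogether, as the paper's two-line adjunction argument does.
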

\begin{proof}
Let $(X,M)$, $(Y,N)$ and $(Z,P)$ be three objects of
$\corr(k)$. A cohomological correspondence
from $(X\times Y,M\boxtimes N)$ to $(Z,P)$
is determined by a morphism of $k$-schemes
$c:C\to X\times Y\times Z$ together with a map
\[\alpha:\Lambda\to c^!(\mathbf{D}(M\boxtimes N)\boxtimes P)\, .\]
A cohomological correspondence
from $(X,M)$ to $(Y\times Z,\mathbf{D}N\boxtimes P)$
is determined by a morphism of $k$-schemes
$c:C\to X\times Y\times Z$ together with a map
\[\alpha:\Lambda\to
c^!(\mathbf{D}M\boxtimes (\mathbf{D}N\boxtimes P))\, .\]
The K\"unneth formula
\[\mathbf{D}(M\boxtimes N)\boxtimes P
\cong\mathbf{D}M\boxtimes (\mathbf{D}N\boxtimes P)\]
implies our assertion.
\end{proof}
\begin{paragr}
Let $X$ be a scheme and $M$ a locally constructible $h$-motive on $X$.
We denote by $\Delta:X\to X\times X$ the diagonal map.
There is a \emph{transposed evaluation map}
\[ev^t_M:\mathbf{D}M\boxtimes M\to\Delta_*I_X\]
which corresponds by adjunction to the classical evaluation map
\[\Delta^*(\mathbf{D}M\boxtimes M)\cong\uHom(M,I_X)\otimes M\to I_X\, .\]
\end{paragr}
\begin{defn}
Let $(C,c,\alpha)$ be a cohomological correspondence
from $(X,M)$ to $(Y,N)$.
In the case $(X,M)=(Y,N)$ 
we can form the following Cartesian square.
\[
\begin{tikzcd}
F\ar{r}{\delta}\ar{d}[swap]{p} & C\ar{d}{c}\\
X\ar{r}{\Delta} & X\times X
\end{tikzcd}
\]
The scheme $F$ is called the \emph{fixed locus}\index{fixed locus (of a correspondence)}
of the
correspondence $(C,c)$.
The transposed evaluation map of $M$ induces  by proper base change a map
\[c^!(ev^t_M):
c^!(\mathbf{D}M\boxtimes M)\to c^!\Delta_*I_X\cong\delta_*p^!I_X
\cong\delta_*I_F\, ,\]
and thus, by adjunction, a map
\[ev^t_{M,c}:\delta^*c^!(\mathbf{D}M\boxtimes M)\to I_F\, .\]
The map $\alpha:\Lambda\to c^!(\mathbf{D}M\boxtimes M)$ finally
induces a map
\[Tr(\alpha):\Lambda\cong p^*\Lambda\to I_{F}\]
defined as the composition of $\delta^*\alpha$
with $ev^t_{M,c}$ (modulo the identification $\delta^*\Lambda\cong\Lambda$).
The corresponding class
\[Tr(\alpha)\in H^0\Hom_{\DM_h(F,\Lambda)}(\Lambda,I_F)\]
is called the \emph{characteristic class}\index{characteristic class} of $\alpha$.
\end{defn}
\begin{Example}
Let $f:X\to X$ be a morphism of schemes, and let $M$
be a $\Lambda$-linear locally constructible $h$-motive on $X$,
equipped with a map $\alpha: f^*M\to M$.
Then $(X,(1_X,f),\mathbf{D}\alpha)$ is a cohomological correspondence
from $(X,\mathbf{D}M)$ to itself, with
\[\mathbf{D}\alpha: 1^*_X\mathbf{D}M\cong\mathbf{D}M\to\mathbf{D}f^*M\cong f^!\mathbf{D}M\, .\]
If we form the Cartesian square
\[
\begin{tikzcd}
F\ar{r}{\delta}\ar{d}[swap]{p} & X\ar{d}{(1_X,f)}\\
X\ar{r}{\Delta} & X\times X
\end{tikzcd}
\]
we see that $F$ is indeed the fixed locus of the morphism $f$.
If $\Lambda\subset\QQ$, then he associated characteristic class
\[Tr(\mathbf{D}\alpha)\in H^0\Hom(\Lambda,I_F)
\otimes\QQ\cong{CH}_0(F)\otimes\QQ\]
defines a $0$-cycle on $F$ (see Theorem \ref{cycle class}).
In the case where $f$ only has isolated
fixed points, we have
\[{CH}_0(F)\otimes\QQ\cong
{CH}_0(F_{red})\otimes\QQ
\cong\oplus_{i\in I}{CH}_0(Spec(k_i))\otimes\QQ\]
where $I$ is a finite set and each $k_i$ is a finite field extension of $k$
with $F_{red}=\coprod_i Spec(k_i)$.
Using this decomposition, one can then express the characteristic class of $\alpha$
as a sum of local terms: the contributions of each summand
${CH}_0(Spec(k_i))\otimes\QQ$.
For instance, if $U$ is an open subset of $X$ such that
$f(U)\subset U$, and if $j:U\to X$ is the  inclusion map,
we can consider $M=j_!\Lambda$ and
the canonical isomorphism $\alpha:f^*j_!\Lambda\to j_!\Lambda$,
in which case $Tr(\alpha)$ is a way to count the number of fixed points
of $f$ in $U$ with `arithmetic multiplicities'
(in the form of $0$-cycles).\label{example fixed points}
\end{Example}
\begin{rem}
The notation $Tr(\alpha)$ is justified by Proposition \ref{cohdual}:
indeed, essentially by definition of the
composition law for cohomological correspondences sketched
in paragraph \ref{compose corr}, the characteristic class $Tr(\alpha)$
is the trace of the endomorphism $(C,c,\alpha)$ of the dualizable
object $(X,M)$. Indeed, the endomorphisms of $(Spec(k),\Lambda)$
in $\corr(k)$ are determined by pairs $(F,t)$
where $F$ is a $k$-scheme and $t:\Lambda\to I_F$ is
a section of the dualizing object of $F$ in $\DM_h(F,\Lambda)$.
\end{rem}
\begin{cor}
For any cohomological correspondences $(C,c,\alpha)$
and $(D,d,\beta)$ from $(X,M)$ to itself, we have:
\[Tr(\beta\circ\alpha)=Tr(\alpha\circ\beta)\, .\]
\end{cor}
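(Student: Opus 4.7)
The plan is to identify $Tr(\alpha)$ with the categorical trace of the endomorphism $(C,c,\alpha)$ of $(X,M)$ inside the symmetric monoidal bicategory $\corr(k)$, and then invoke the cyclic invariance of the trace which holds in any symmetric monoidal category once the underlying object is dualizable. By Proposition \ref{cohdual}, each object $(X,M)$ of $\corr(k)$ is dualizable, so this strategy applies.

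More concretely, I would first record the identification, already hinted at in the remark preceding the corollary, between the characteristic class of $\alpha$ and the trace computed in $\corr(k)$. The endomorphism monoid of the unit $(\spec{k},\Lambda)$ in $\corr(k)$ consists, by unwinding the definition of cohomological correspondence, of pairs $(F,t)$ with $F$ a $k$-scheme and $t\in H^0\Hom(\Lambda,I_F)$. The categorical trace of $(C,c,\alpha):(X,M)\to(X,M)$ is the composite
\[
(\spec{k},\Lambda)\xrightarrow{\eta}(X,M)\otimes(X,\mathbf{D} M)
\xrightarrow{(C,c,\alpha)\otimes 1}(X,M)\otimes(X,\mathbf{D} M)
\xrightarrow{\sigma}(X,\mathbf{D} M)\otimes(X,M)\xrightarrow{\varepsilon}(\spec{k},\Lambda),
\]
where $\eta,\varepsilon$ are the coevaluation and evaluation provided by Proposition \ref{cohdual} and $\sigma$ is the symmetry. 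Chasing through the definitions of composition (Definition \ref{compose corr}) and of the evaluation and coevaluation, one sees that the fixed locus $F$ produced by the pullback square in the definition of $Tr(\alpha)$ is precisely the scheme underlying the resulting endomorphism of the unit, and that the section $\Lambda\to I_F$ obtained is the map constructed from $\delta^*\alpha$ and $ev^t_{M,c}$. Hence the two notions of trace agree.

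With this identification in hand, the corollary reduces to the purely formal fact that the trace is cyclic: in any symmetric monoidal (higher) category, for a dualizable object $x$ and endomorphisms $u,v:x\to x$ one has $Tr(u\circ v)=Tr(v\circ u)$. The standard proof writes $Tr(u\circ v)$ in the form $\varepsilon\circ(u\otimes v^\vee)\circ\eta$ via the isomorphism $\End(x)\cong x^\vee\otimes x$ and then uses the symmetry of the monoidal structure together with the triangle identities to rewrite this as $Tr(v\circ u)$.

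The main obstacle is bookkeeping: one has to be confident that the bicategorical structure on $\corr(k)$ outlined in the text (associativity, unitality, symmetry up to coherent isomorphism), assembled from proper base change and the various K\"unneth isomorphisms, is robust enough that the classical symbolic manipulation of traces in a symmetric monoidal $1$-category goes through verbatim on the homotopy category; since the statement of the corollary lives in the ordinary group $H^0\Hom(\Lambda,I_F)$ (independent, by a separate check, of the choice of pullback representing $\beta\circ\alpha$ and $\alpha\circ\beta$), it is enough to work in the symmetric monoidal $1$-category obtained by passing to isomorphism classes of $1$-morphisms, where the cyclicity of the trace is a well-known elementary consequence of the triangle identities and the symmetry.
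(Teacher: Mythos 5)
Your proposal is correct and matches the paper's (implicit) argument: the corollary is stated right after the remark identifying $Tr(\alpha)$ with the categorical trace of the endomorphism $(C,c,\alpha)$ of the dualizable object $(X,M)$ in $\corr(k)$ (Proposition \ref{cohdual}), so cyclicity of traces for dualizable objects in a symmetric monoidal (bi)category gives the result, exactly as you argue. Your added care about passing to isomorphism classes of correspondences, so that the equality is read in $H^0\Hom(\Lambda,I_F)$ after the canonical identification of the two fixed loci, is precisely the bookkeeping the paper leaves tacit.
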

\begin{cor}
Let $(C,c,\alpha)$ be a cohomological correspondence
from $(X,M)$ to itself. If we see $\alpha$ as a map
from $c_1^*M\to c^!_2M$, it determines a map
\[\mathbf{D}\alpha:c^*_2\mathbf{D}M\cong\mathbf{D}c_2^!M\to
\mathbf{D}c_1^*M\cong c^!_1\mathbf{D}M\, .\]
If $\tau:X\times X\to X\times X$ denotes the permutation
of factors, the cohomological correspondence $(C,\tau c,\mathbf{D}\alpha)$
from $(X,\mathbf{D}M)$ to itself is
the explicit description of the map obtained from
$(C,c,\alpha)$ by duality. In particular:
\[Tr(\alpha)=Tr(\mathbf{D}\alpha)\, .\]
\end{cor}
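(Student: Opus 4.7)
The plan is to identify $(C,\tau c, \mathbf{D}\alpha)$ with the categorical transpose of $(C,c,\alpha)$ in the symmetric monoidal bicategory $\corr(k)$, using the duality $(X,M)^\vee \cong (X,\mathbf{D}M)$ from Proposition \ref{cohdual}. Once this identification is made, the equality $Tr(\alpha) = Tr(\mathbf{D}\alpha)$ is a formal consequence of the general fact that, in any symmetric monoidal (bi)category, an endomorphism and its dual have the same trace.

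To carry this out, I will first unpack the evaluation and coevaluation maps for $(X,M)\otimes (X,\mathbf{D}M)$ in $\corr(k)$. By construction, both are supported on the diagonal $\Delta\colon X\to X\times X$ (with the coevaluation twisted by the permutation $\tau$, since it goes from $\Lambda$ to $(X,\mathbf{D}M)\otimes(X,M)$ rather than the opposite order), and their structural maps come from the usual evaluation $\mathbf{D}M\otimes M\to I_X$ and its transpose, recognised through Corollary \ref{fancy hom}. The transpose $(C,c,\alpha)^\vee$ is then obtained by the standard bicategorical recipe: tensor $(C,c,\alpha)$ with $1_{(X,\mathbf{D}M)}$, sandwich between $\mathrm{coev}\otimes 1$ and $1\otimes \mathrm{ev}$, and compose using Definition \ref{compose corr}.

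Second, I would verify that this recipe yields exactly $(C,\tau c, \mathbf{D}\alpha)$. The underlying scheme and map to $X\times X$ are computed by iterated fibre products with the diagonal, which collapse back to $C$ with the two factors of $X\times X$ swapped, giving $\tau c$. At the level of the structural datum, the zig-zag of evaluations and coevaluations matches, via Theorem \ref{abstract kunneth} and Grothendieck--Verdier duality, the construction that sends $\alpha\colon c_1^*M\to c_2^!M$ to $\mathbf{D}\alpha\colon c_2^*\mathbf{D}M\to c_1^!\mathbf{D}M$, using the interchange isomorphisms $\mathbf{D}f^*\cong f^!\mathbf{D}$ and $\mathbf{D}f^!\cong f^*\mathbf{D}$.

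The main obstacle is tracking the numerous coherence isomorphisms (K\"unneth identifications, proper base change, associators and unitors of the bicategorical composition) that appear in matching the two descriptions. A natural way to organise the bookkeeping is to establish the identification first in the universal case $C = X$, $c = \Delta$, $\alpha = 1_M$, where both sides reduce to $1_{(X,\mathbf{D}M)}$, and then propagate by naturality along arbitrary $(C,c,\alpha)$, exploiting the fact that any cohomological correspondence factors as the composition of a \emph{geometric} correspondence (with trivial structural map) and a \emph{coefficient} correspondence supported on the diagonal of $X$. Once the identification of dual endomorphisms is in hand, the equality $Tr(\alpha) = Tr(\mathbf{D}\alpha)$ in $H^0\Hom(\Lambda, I_F)$ follows immediately from the invariance of trace under duality.
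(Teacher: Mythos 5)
Your proposal is essentially the paper's own (implicit) argument: the corollary is meant to follow exactly as you describe, by unwinding the duality $(X,M)^\vee\cong(X,\mathbf{D}M)$ of Proposition \ref{cohdual} in the symmetric monoidal bicategory $\corr(k)$ --- the evaluation and coevaluation correspondences are supported on the diagonal and given by the unit/evaluation maps, the fibre products in the bicategorical composite collapse back to $C$ with the two legs of $c$ exchanged (hence $\tau c$), and the structural datum becomes $\mathbf{D}\alpha$ via $\mathbf{D}f^!\cong f^*\mathbf{D}$, $\mathbf{D}f^*\cong f^!\mathbf{D}$ and the K\"unneth compatibilities of Theorem \ref{abstract kunneth} --- after which $Tr(\alpha)=Tr(\mathbf{D}\alpha)$ is the general invariance of traces under passage to the dual endomorphism. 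So your route coincides with the intended one; only your optional bookkeeping device (factoring through a ``geometric'' and a ``coefficient'' correspondence and propagating from the universal case) is looser than a direct unwinding, but it is not needed for the argument to go through.
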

\begin{paragr}
The formation of traces is functorial with respect to
morphisms of correspondences. Let
$M$ be a locally constructible motive on a scheme $X$,
and $f:C\to D$, $d:D\to X\times X$, $c=df$, be morphisms,
with $f$ proper. We form pull-back squares
\[
\begin{tikzcd}
F\ar{r}{\delta}\ar{d}{g}
\ar[bend right]{dd}[swap]{p}&C\ar{d}[swap]{f}\ar[bend left]{dd}{c}\\
G\ar{r}{\varepsilon}\ar{d}{q}&D\ar{d}[swap]{d}\\
X\ar{r}{\Delta}&X\times X
\end{tikzcd}
\]
and have a composition
\[f_*c^!(\mathbf{D}M\boxtimes N)
\cong f_*f^!d^!(\mathbf{D}M\boxtimes N)
\xrightarrow{\text{co-unit}} d^!(\mathbf{D}M\boxtimes N)\]
as well as a composition
\[f_*\delta_* I_F\cong\varepsilon_* g_* I_F
\cong\varepsilon_* g_* g^! I_G
\xrightarrow{\text{co-unit}}\varepsilon_* I_G\, .\]
One then checks right away that the following square commutes.
\[
\begin{tikzcd}
f_*c^!(\mathbf{D}M\boxtimes N)\ar{r}\ar{d}[swap]{f_*c^!(ev^t_M)}
&d^!(\mathbf{D}M\boxtimes N)\ar{d}{g_*d^!(ev^t_M)}\\
f_*\delta_* I_F\ar{r}&\varepsilon_*I_G
\end{tikzcd}
\]
This implies immediately that, for any map\label{trivial functoriality trace}
$\alpha:\Lambda\to c^!(\mathbf{D}M\boxtimes M)$, we have:
\[Tr(\alpha)=Tr(f_!(\alpha))\, .\]
\end{paragr}
\begin{paragr}
Proper maps act on cohomological correspondences
as follows. We consider a proper
morphism of geometric correspondences, by which we mean a
commutative square of the form
\[\begin{tikzcd}
C\ar{r}{\varphi}\ar{d}[swap]{c=(c_1,c_2)}&D\ar{d}{d=(d_1,d_2)}\\
X\times X'\ar{r}{f\times f'}&Y\times Y'
\end{tikzcd}\]
in which $f:X\to Y$, $f':X'\to Y'$ and $\varphi:C\to D$ are proper
map, together with locally constructible $h$-motives
$M$ on $X$ and $M'$ on $X'$. Given a cohomological
correspondence from $(X,M)$ to $(X',M')$ of the form $(C,c,\alpha)$,
we have a cohomological correspondence from $(X,f_!M)$ to $(X',f'_!M')$
\[(f,f')_!(C,c,\alpha)=(C,d\varphi,(f,f')_!(\alpha))\]
defined as follows. If, furthermore, the commutative square
above is Cartesian, the map $(f,f')_!(\alpha)$ is the induced
map
\[\Lambda\xrightarrow{\text{unit}}
\varphi_*\Lambda\xrightarrow{\varphi_*\alpha}
\varphi_* c^!(\mathbf{D}M\boxtimes M')
\cong d^!(f\times f')_*(\mathbf{D}M\boxtimes M')
\cong d^!(\mathbf{D}f_!M\boxtimes f'_!M')\]
Otherwise, we consider the induced proper map
\[g:C\to E=X\times X'\times_{Y\times Y'}D\]
and apply the preceding construction to $g_!(\alpha)$, replacing
$C$ by $E$. 

In the case where $(X,M)=(X',M')$ and $f=f'$, we simply write\label{pushcorr}
\[f_!(\alpha)=(f,f)_!(\alpha)\, .\]
\end{paragr}
\begin{thm}[Lefschetz-Verdier Formula]\index{trace formula!Lefschetz-Verdier}
We consider a commutative square of $k$-schemes of finite type
of the form
\[\begin{tikzcd}
C\ar{r}{\varphi}\ar{d}[swap]{c=(c_1,c_2)}&D\ar{d}{d=(d_1,d_2)}\\
X\times X\ar{r}{f\times f}&Y\times Y
\end{tikzcd}\]
in which both $f$ and $\varphi$ are proper, as well as a locally
constructible $h$-motive $M$ on $X$, together with a map
$\alpha:\Lambda\to c^!(\mathbf{D}M\boxtimes M)$.
Let $F$ and $G$ be the fixed locus of $(C,c)$ and $(D,d)$ respectively.
Then the induced map $\psi:F\to G$ is also proper, and\label{thm:Lefschetz-Verdier}
\[\psi_!(Tr(\alpha))=Tr(f_!(\alpha))\, .\]
\end{thm}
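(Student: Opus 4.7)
The plan is to factor the commutative square in the hypothesis into two pieces---one over the unchanged base $X\times X$ and one that is Cartesian over $Y\times Y$---and to handle each piece by a distinct tool. Form the pullback $E=X\times X\times_{Y\times Y}D$ with its proper projections $e:E\to X\times X$ and $\tilde\varphi:E\to D$; the universal property provides a canonical map $g=(c,\varphi):C\to E$ with $eg=c$ and $\tilde\varphi g=\varphi$. Then $g$ is itself proper, since $\tilde\varphi$ is separated (as a base change of $f\times f$) and $\tilde\varphi g=\varphi$ is proper. The square of the theorem thus factors as
\[\begin{tikzcd}
C\ar{r}{g}\ar{d}[swap]{c}&E\ar{r}{\tilde\varphi}\ar{d}{e}&D\ar{d}{d}\\
X\times X\ar[equal]{r}&X\times X\ar{r}{f\times f}&Y\times Y\, .
\end{tikzcd}\]
Pulling back along $\Delta_X$ and $\Delta_Y$ produces a factorization $\psi=\phi\circ g'$ where $g':F\to F_E$ is proper (a base change of $g$) and $\phi:F_E\to G$ is proper (a base change of $f$); this also gives the asserted properness of $\psi$.

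For the left piece, I would invoke the trivial functoriality of traces from paragraph \ref{trivial functoriality trace}, applied to the morphism of correspondences $(C,c,\alpha)\to(E,e,g_!(\alpha))$ over the unchanged target pair $(X,M)$: this yields $g'_!(Tr(\alpha))=Tr(g_!(\alpha))$. For the right (Cartesian) piece, which is the core step, I would prove directly that for any $\beta:\Lambda\to e^!(\mathbf{D}M\boxtimes M)$,
\[\phi_!(Tr(\beta))=Tr((f,f)_!(\beta))\, ,\]
where $(f,f)_!(\beta):\Lambda\to d^!(\mathbf{D}f_*M\boxtimes f_*M)$ is defined by the Cartesian-case formula of paragraph \ref{pushcorr}. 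This follows from a diagram chase built on three ingredients: proper base change $\tilde\varphi_*e^!\cong d^!(f\times f)_*$; the K\"unneth isomorphism
\[(f\times f)_*(\mathbf{D}M\boxtimes M)\cong f_*\mathbf{D}M\boxtimes f_*M\cong\mathbf{D}f_*M\boxtimes f_*M\, ,\]
combining K\"unneth in cohomology with Grothendieck-Verdier duality (both available because $f$ is proper); and the naturality of the transposed evaluation map under proper push-forward. Concatenating the two pieces, and using that $f_!(\alpha)=(f,f)_!(g_!(\alpha))$ by the definition in paragraph \ref{pushcorr}, gives
\[\psi_!(Tr(\alpha))=\phi_!g'_!(Tr(\alpha))=\phi_!\,Tr(g_!(\alpha))=Tr((f,f)_!(g_!(\alpha)))=Tr(f_!(\alpha))\, .\]

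The hard part is expected to be the third ingredient above, namely the commutativity of the square
\[\begin{tikzcd}
(f\times f)_*(\mathbf{D}M\boxtimes M)\ar{r}{\sim}\ar{d}[swap]{(f\times f)_*(ev^t_M)}&\mathbf{D}f_*M\boxtimes f_*M\ar{d}{ev^t_{f_*M}}\\
(f\times f)_*\Delta_{X,*}I_X\ar{r}&\Delta_{Y,*}I_Y\, ,
\end{tikzcd}\]
whose bottom arrow is $\Delta_{Y,*}$ of the counit $f_*I_X\cong f_!f^!I_Y\to I_Y$ composed with the canonical isomorphism $(f\times f)_*\Delta_{X,*}\cong\Delta_{Y,*}f_*$. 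Conceptually this is nothing more than the naturality of evaluation under a symmetric monoidal push-forward of dualizable objects, but making it precise requires unwinding $ev^t$ as the adjoint of the evaluation $\uHom(N,I)\otimes N\to I$ and tracking the K\"unneth isomorphisms together with the identity $\Delta_Y f=(f\times f)\Delta_X$. Once this compatibility is in hand, the rest of the Cartesian case is a routine diagram chase invoking proper base change and the explicit formula for $(f,f)_!(\beta)$ recalled in paragraph \ref{pushcorr}.
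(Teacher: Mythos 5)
Your proposal is correct and follows essentially the same route as the paper: reduce to the Cartesian case via the factorization through $E=X\times X\times_{Y\times Y}D$ and the trivial functoriality of traces of \ref{trivial functoriality trace} (which is exactly how \ref{pushcorr} defines $f_!(\alpha)$ in the non-Cartesian case), and then verify the compatibility of the transposed evaluation maps $ev^t_M$ and $ev^t_{f_*M}$ under $(f\times f)_*$ using proper base change and K\"unneth. The square you single out as the hard point is precisely the one the paper checks, which it does by dualizing to the co-unit/unit map $\Delta_*\Lambda\to M\boxtimes\mathbf{D}M$ and invoking the symmetric monoidality of $f^*$.
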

\begin{proof}
The functoriality of the trace explained in \ref{trivial functoriality trace}
shows that it is sufficient to prove the theorem in the case where the
square is Cartesian.
We check that the two maps
\[(f\times f)_*(\mathbf{D}M\boxtimes M)
\cong (\mathbf{D}f_!M\boxtimes f_!M)\xrightarrow{ev^t_{f_!M}}
\Delta_* I_Y\]
and
\[(f\times f)_*(\mathbf{D}M\boxtimes M)
\xrightarrow{(f\times f)_*(ev^t_{M})}(f\times f)_*\Delta_* I_X
\cong\Delta_* f_* I_X\cong\Delta_*f_! f^!I_Y
\xrightarrow{\text{co-unit}}\Delta_*I_Y\]
are equal (where we have denoted by the same symbol the
diagonal of $X$ and the diagonal of $Y$).
By duality, this amounts to check that the unit map
\[\Delta_*\Lambda\to M\boxtimes\mathbf{D}M\]
is compatible with the push-forward $f_*$.
This is a fancy way to say that $f_*M$ has
a natural $f_*\Lambda$-algebra structure, which comes
from the fact that the functor $f^*$ is symmetric monoidal.
The Lefschetz-Verdier Formula follows then right away.
\end{proof}
\begin{rem}
When $\Lambda=\QQ$, the operator $\psi_!$ coincides
with the usual push-forward of $0$-cycles:
seen as a map
\[\psi_!:H^0\Hom(\Lambda,I_F)\to H^0\Hom(\Lambda,I_G)\, . \]
\end{rem}
\begin{thm}[Additivity of Traces]\index{trace formula!additivity}
Let $c=(c_1,c_2):C\to X\times X$ be a correspondence of $k$-schemes.
We consider a cofiber sequence
\[M'\to M\to M''\]
in $\DM_{h,lc}(X)$ as well as maps
\[\alpha' : c_1^*M' \to c_2^!M'\, , \
\alpha : c_1^*M \to c_2^!M \, , \
\alpha'' : c_1^*M'' \to c_2^!M''\]
in $\DM_{h,lc}(C)$ so that the diagram below commutes (in the sense of $\infty$-categories).
\[
\begin{tikzcd}
c^*_1M'\ar{r}\ar{d}{\alpha'}
&c^*_1M\ar{r}\ar{d}{\alpha}
&c^*_1M''\ar{d}{\alpha''}\\
c^!_2M'\ar{r}&c^!_2M\ar{r}&c^!_2M''
\end{tikzcd}
\]
Then the following formula holds.
\[Tr(\alpha)=Tr(\alpha')+Tr(\alpha'')\]
\end{thm}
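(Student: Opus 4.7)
The plan is to recognize the characteristic class $Tr(\alpha)$ as the trace of the endomorphism $(C,c,\alpha)$ of the dualizable object $(X,M)$ in the symmetric monoidal bicategory $\corr(k)$; this identification was already made explicit in the remark following the definition of the characteristic class, and $(X,M)$ is dualizable by Proposition \ref{cohdual}. Once the statement is phrased in this language, additivity is an instance of the general additivity of traces for morphisms of cofiber sequences in a stable symmetric monoidal setting.

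First, with the geometric correspondence $(C,c)$ fixed, I would assemble the pairs $(M,\alpha)$ (where $M\in\DMhlc(X)$ and $\alpha:c_1^*M\to c_2^!M$) into a stable $\infty$-category $\mathcal{E}(C,c)$, built as a pullback of $\DMhlc(X)$ along itself over $\DMhlc(C)^{\Delta^1}$ via the exact functors $c_1^*$ and $c_2^!$. The coherent commutativity of the diagram appearing in the statement says precisely that $(M',\alpha')\to(M,\alpha)\to(M'',\alpha'')$ is a cofiber sequence in $\mathcal{E}(C,c)$, and the assignment $(M,\alpha)\mapsto Tr(\alpha)\in\pi_0\Hom_{\DM_h(F,\Lambda)}(\Lambda,I_F)$ will factor through $K_0(\mathcal{E}(C,c))$.

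Next, I would invoke the additivity of traces in stable symmetric monoidal $\infty$- (or bi-)categories. Concretely, $Tr$ is built from the coevaluation $\Lambda\to M\otimes\mathbf{D}M$, the endomorphism $\alpha$, and the transposed evaluation $\mathbf{D}M\boxtimes M\to\Delta_*I_X$, all pulled back along $\delta$. Each ingredient is exact or bilinear in the variable $M$: the box product $\boxtimes$ is bi-exact, $\mathbf{D}$ is exact on locally constructible objects (being an involution, Corollary \ref{cohdual} and the Grothendieck--Verdier duality theorem), and both $c^!$ and $\delta^*$ are exact. Combining these ingredients, the composition factors through $K_0$ and is therefore additive on cofiber sequences in $\mathcal{E}(C,c)$, which yields the claim.

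The hard part, as is always the case for additivity of traces, is the coherence. In a naked triangulated category additivity is known to fail in general: the standard obstruction is that a choice of middle endomorphism lifting $\alpha'$ and $\alpha''$ is not canonical. In the $\infty$-categorical framework this difficulty dissolves, because a commuting square of cofiber sequences is by definition a morphism of cofiber sequences together with all higher homotopies. The cleanest execution is therefore to invoke a ready-made additivity-of-traces theorem in a stable symmetric monoidal bicategory (in the spirit of Ponto--Shulman, or its $\infty$-categorical upgrades). A more hands-on alternative would be to unfold the definition of $Tr$ and verify additivity on the nose, using exactness of the functors at play together with the K\"unneth formulas of the preceding subsection; this mirrors the classical proof of additivity of the Lefschetz number \`a la SGA~5 but is considerably more bureaucratic and obscures the conceptual content.
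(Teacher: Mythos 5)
Your overall strategy is in line with the paper, which in fact offers no self-contained proof of this statement: it simply refers to Jin--Yang \cite[Theorem 4.2.8]{trace}, whose derivator-based argument is precisely a motivic implementation of the May/Ponto--Shulman-type additivity theorem you propose to invoke, and the identification of $Tr(\alpha)$ with the bicategorical trace of the endomorphism $(C,c,\alpha)$ of the dualizable object $(X,M)$ in $\corr(k)$ is already made in the remark following Proposition \ref{cohdual}. So ``reduce to a ready-made additivity theorem for traces in a suitably stable symmetric monoidal bicategory'' is essentially the paper's own route, the only difference being which reference carries the weight; to actually close the argument you would still have to verify that $\corr(k)$, with its shadow given by $(F,t)\mapsto t\in H^0\Hom(\Lambda,I_F)$, satisfies the hypotheses of whatever general theorem you cite, and that verification in the motivic six-functor setting is exactly the content of Jin--Yang's work.

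There is, however, a genuine gap in the justification you offer in place of that citation. The claim that $Tr$ ``factors through $K_0(\mathcal{E}(C,c))$'' because $\boxtimes$, $\mathbf{D}$, $c^!$ and $\delta^*$ are exact or bi-exact is circular: factoring through $K_0$ \emph{is} the additivity statement, and it does not follow formally from exactness of the constituent functors, because the trace pairs a covariant occurrence of $M$ (via the coevaluation) against a contravariant one (via $\mathbf{D}M$ and the evaluation), and this pairing is not additive for formal reasons --- this is exactly the classical failure of additivity of traces in triangulated categories. Likewise, the coherence difficulty does not ``dissolve'' in the $\infty$-categorical framework: the coherent square is merely the necessary \emph{input} (which is why the theorem demands a commutative diagram in the $\infty$-categorical sense), while additivity remains a nontrivial theorem even with fully coherent data, proved by May-style rotation arguments or their derivator/$\infty$-categorical descendants. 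For the same reason, your ``hands-on alternative'' of unfolding the definition and checking additivity on the nose would not be merely bureaucratic; it would amount to reproving that theorem. So the proposal stands or falls with the external additivity theorem, and the intermediate reasoning should not be presented as an independent argument.
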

The proof is given in the paper of Jin and Yang
\cite[Theorem 4.2.8]{trace} using the language
of algebraic derivators, which is sufficient for our purpose
(note however that, by Balzin's work \cite[Theorem 2]{balzin},
it is clear that one can go back and forth between the language of fibred
$\infty$-categories and the one of algebraic derivators).
The additivity of traces can be extended to more general homotopy colimits;
see Gallauer's thesis~\cite{Gallauer}.
\begin{rem}
It is pleasant to observe that, when $\Lambda=\QQ$, this
is the classical push-forward of $0$-cycles.
Lefschetz-Verdier Formula is particularly relevant
in the case where
$Y=Spec(k)$, and $F$ consists of isolated points in $X$
(in which case it is called the Grothendieck-Lefschetz formula).
Indeed, $f_!(M)$ is then the cohomology
of $X$ with compact support with coefficients in $M$,
so that $Tr(f_!(\alpha))$ is the ordinary trace of
the endomorphism $f_!(\alpha):f_!(M)\to f_!(M)$,
which can be computed through $\ell$-adic realizations
as an alternating sum of ordinary traces of linear maps. On the other hand,
$\psi_!(Tr(\alpha))$ is the sum of traces
of the endomorphisms induced by $\alpha$
on each $p_!x^*M$, where $x$ runs over the points of $F$,
with $p:Spec(\kappa(x))\to Spec(k)$ the structural map.
In the particular case discussed at the end of
Example \ref{example fixed points},
this shows that one can compute the number of fixed points
with geometric multiplicities of a endomorphism
of a $k$-scheme $f:X\to X$ with
isolated fixed points which extends to an endomorphism
of a compactification of $X$ and whose graph is
transverse to the diagonal, using the trace of
the induced endomorphism of the motive with compact support of $X$.
For the Frobenius map, such an extension is automatic, so that
We can count rational points of any
separated $\mathbf{F}_q$-scheme of finite type $X_0$
over a finite field $\mathbf{F}_q$ with the
Grothendieck-Lefschetz formula
\[ \# X(\mathbf{F}_q)=\sum_i (-1)^i\, \mathrm{Tr}\big(F:H^i_c(X,\QQ_\ell)\to H^i_c(X,\QQ_\ell)\big)\, ,\]
where $X$ is the pull-back of $X_0$ on the algebraic closure $\mathbf{F}_q$,
and where $F$ is a the map induced by the geometric Frobenius
(i.e. where one considers the correspondence defined by the
transposed graph of the arithmetic Frobenius).
Indeed, using
the additivity of traces, it is in fact sufficient to prove
this formula in the case where $X$ is smooth and projective, in which
case the classical Lefschetz formula applies.
We will now prove a more general version of it:
we will consider arbitrary (locally) constructible motivic
sheaves as coefficients.\label{rem:baby Lefschetz}
\end{rem}
\begin{paragr}
Let $p$ be a prime number, $r>0$ a natural number, and $q=p^r$.
Let $k_0=\mathbf F_q$ be the finite field with $q$ elements,
and let us choose an algebraic closure $k$ of $k_0$.
Given a $\mathbf F_p$-scheme $X$, we denote by
\[
F_X:X\to X
\]
the \emph{absolute Frobenius}\index{Frobenius!absolute}
of $X$, given by the identity on the underlying
topological space, and by $a\mapsto a^p$ on the structural sheaf $\mathcal{O}_X$.
The absolute Frobenius is a natural transformation from the identity of the category
of $\mathbf F_p$-schemes to itself. In particular, for any morphism of $k$-schemes
$u:U\to X$, there is a commutative square
$$\begin{tikzcd}
U\ar{r}{F_U}\ar{d}{u}&U\ar{d}{u}\\
X\ar{r}{F_X}&X
\end{tikzcd}$$
and thus a comparison map:
\[
F_{U/X}=(u,F_U)\colon U\to F_X^{-1}(U)=X\times_X U
\]
called the \emph{relative Frobenius}\index{Frobenius!relative} of $U$ over $X$.
In case $X_0$ is a $k_0$-scheme, the $r$th iteration of the absolute Frobenius
$$F^r_{X_0}\colon X_0\to X_0$$
is often called the \emph{$q$-absolute
Frobenius of $X_0$}\index{Frobenius!q-absolute@$q$-absolute}
(and has the feature of being
a map of $k_0$-schemes). By base change to $k$, it induces
the \emph{geometric Frobenius of $X$}\index{Frobenius!geometric}, i.e. the morphism
of $k$-schemes
\[
\phi_r:X\to X\, ,
\]
where $X=\spec{k}\times_{\spec{k_0}}X_0$. Following Deligne's conventions,
sheaves (or motives) on $X_0$ will often be denoted by $M_0$, and the
pullback of $M_0$ along the canonical projection $X\to X_0$ will be written $M$.
The map $k\to k$, defined by $x\mapsto x^q$ is an automorphism
of $k_0$-algebras, which induces an isomorphism of $k_0$-schemes
\[
\mathit{Frob}_q\colon\spec{k}\to\spec{k}\, .
\]
It induces an isomorphism of $k_0$-schemes
$$\mathit{Frob}_{q,X}
=(\mathit{Frob}_q\times_{\spec{k_0}}1_{X_0})\colon X\to X$$
whose composition with $\phi_r$ is nothing else than the
absolute Frobenius of $X$. The map
$$\mathit{Frob}_{q,X}^{-1}\colon X\to X$$
is often called the \emph{arithmetic Frobenius of $X$}.

\end{paragr}
\begin{Lemma}
Let $X$ be a locally noetherian $\mathbf{F}_p$-scheme. The functor
\[
F^*_X:\DM_{h}(X,\Lambda)\to\DM_{h}(X,\Lambda)
\]
is the identity.
\end{Lemma}
\begin{proof}
Let $a:X\to\spec{\mathbf{F}_p}$ be the structural map.
We have a commutative diagram of the form
\[
\begin{tikzcd}
X\ar{r}{F_X}\ar{d}{a}&X\ar{d}{a}\\
\spec{\mathbf{F}_p}\ar[equals]{r}&\spec{\mathbf{F}_p}
\end{tikzcd}
\]
in which the map $F_X$ is
a universal homeomorphism (being integral, radicial and surjective) and thus invertible locally for the $\h$-topology.
In other words, the square above is Cartesian locally
for the $\h$-topology. By $\h$-descent, the functor
\[
F^*_X:\DM_{h}(X,\Lambda)\to\DM_{h}(X,\Lambda)
\]
thus acts as the identity.
\end{proof}
\begin{rem}
For a $k_0$-scheme $X_0$, since the composition of the geometric Frobenius
$\phi_r:X\to X$ with the inverse of the arithmetic Frobenius is the absolute
Frobenius, this shows that considering actions of the geometric
Frobenius or of the arithmetic Frobenius amount to the same thing, at least
as far as motivic sheaves are concerned. In fact the previous lemma is also a way to
define such actions.

Let $M_0$ be a motivic sheaf on $X_0$, i.e. an object of
$\DM_{h}(X_0,\Lambda)$.
Since $F_X=\mathit{Frob}_{q,X}\, \phi_r$ we have
\[
F^*_X(M)=M\simeq\phi^*_r\,\mathit{Frob}^*_{q,X}(M)\, .
\]
On the other hand, since $M_0$ is defined over $k_0$,
and $M=a^*(M_0)$, there is a canonical isomorphism
$$\mathit{Frob}^*_{q,X}(M)\cong M\, .$$
Therefore, we have a canonical isomorphism
$$\phi^*_r(M)\cong M=(1_X)^!(M)\, .$$
Since the locus of fixed points of $\phi_r$ is precisely
the (finite) set $X(k_0)$ of rational points of $X_0$
(seen as a discrete algebraic variety over $k$),
the Verdier trace of the isomorphism above defines a class
\[
L(M_0)=\mathit{Tr}\Big(\phi^*_r(M)\xrightarrow{\cong} (1_X)^!(M)\Big)
\in H_0(X(k_0),\Lambda)\, .
\]
If $p$ is invertible in $\Lambda$, we have simply
\[
H_0(X(k_0),\Lambda)\cong\Lambda^{X(k_0)}\, .
\]
Under this identification, the obvious function
\[\Lambda^{X(k_0)}\to\Lambda\ , \quad
f\mapsto \int f=\sum_{x\in X(k_0)} f(x)\]
coincides with the operator
\[
\psi_!:H_0(X(k_0),\Lambda)\to H_0(\spec{k},\Lambda)=\Lambda
\]
induced by the structural map $\psi:X(k_0)\to\spec{k}$.

The action of Frobenius is functorial: for any map $f:X_0\to Y_0$,
the induced action of $\phi^*_r$ on $f_!(M)$ via the
proper base change isomorphism
\[ \phi^*_r f_!\cong f_!\phi^*_r \]
coincides with the action defined as above in the
case of $f_!(M_0)$. There is a similar compatibility with the
canonical isomorphism $\phi_r^*f^*\cong f^*\phi^*_r$.

If $X_0$ is proper and if $j:U_0\to X_0$
is an open immersion, for any $M_0$ locally constructible
in $\DM_{h}(U_0,\QQ)$,
we thus get, as a special case of
the Lefschetz-Verdier Formula (Theorem \ref{thm:Lefschetz-Verdier})
\[\mathit{Tr}\Big(\phi^*_r:a_!(M)\to a_!(M)\Big)=\int L(j_!M_0)\in\QQ\]
where $a:U\to \spec k$ is the
structural morphism.\label{rem:construction of Frobenius actions}
\end{rem}
\begin{thm}[Grothendieck-Lefschetz Formula]
Let $j:U_0\to X_0$\index{trace formula!Grothendieck-Lefschetz}
is an open immersion into a proper scheme of finite type over a finite field $k_0$
and let $M_0$ be a locally constructible motivic sheaf in $\DM_{h}(U_0,\QQ)$.
For each rational point $x$ of $U_0$, we denote by $M_x$ the fiber of $M$
at the induced geometric point of $U$, on which there is a canonical
action of the geometric Frobenius (as a particular case of the construction
of Remark \ref{rem:construction of Frobenius actions}).
Then\label{thm:Grothendieck-Lefschetz}
\[\mathit{Tr}\Big(\phi^*_r:a_!(M)\to a_!(M)\Big)=
\sum_{x\in U(k_0)}\mathit{Tr}\Big(\phi^*_r:M_x\to M_x\Big)\, .\]
\end{thm}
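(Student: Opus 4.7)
The plan is to apply the Lefschetz-Verdier formula (Theorem~\ref{thm:Lefschetz-Verdier}) to the graph of the geometric Frobenius on the compactification $X_0$, with coefficient the locally constructible motive $j_!M_0$, and then identify all local contributions at fixed points. Write $\bar a:X\to \spec{k}$ for the (proper) structural morphism, so that $a_!(M)\simeq \bar a_*(j_!M)$ by the decomposition $a=\bar a\circ j$ and the equality $\bar a_!\simeq \bar a_*$. As explained in Remark~\ref{rem:construction of Frobenius actions}, because $M_0$ is defined over $k_0$ there is a canonical isomorphism $\alpha:\phi_r^*(j_!M)\cong j_!M$, which equips $(X,j_!M)$ with the cohomological correspondence carried by the graph $(1_X,\phi_r):X\to X\times X$. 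Applying Theorem~\ref{thm:Lefschetz-Verdier} to this correspondence mapping down to $\spec{k}$ yields
\[\mathit{Tr}\big(\phi_r^*:a_!(M)\to a_!(M)\big)=\int L(j_!M_0),\]
so the problem reduces to expanding the right-hand side as a sum of local traces.

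Next I would describe the fixed locus $F$ of this correspondence. Since the derivative of the absolute Frobenius vanishes identically, so does the derivative of $\phi_r$, and the graph of $\phi_r$ meets the diagonal transversally. The Cartesian square defining $F$ therefore identifies $F$ with $X(k_0)$ as a reduced finite discrete $k$-scheme, i.e.\ a disjoint union of copies of $\spec{k}$ indexed by the rational points of $X_0$. Consequently $L(j_!M_0)\in H_0(X(k_0),\QQ)\simeq \QQ^{X(k_0)}$ is a finite family of scalars, and $\int$ is simply the sum of its entries. For $x\in X(k_0)\setminus U(k_0)$ the stalk $(j_!M)_x$ vanishes, so the corresponding local contribution is zero --- either by additivity of traces applied to a localization triangle in which $(j_!M)$ is killed near $x$, or by observing that the evaluation map defining $\mathit{Tr}$ factors through $0$. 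Hence the sum collapses to one indexed by $U(k_0)$.

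The main obstacle is to identify, for each $x\in U(k_0)$, the local contribution with $\mathit{Tr}\big(\phi_r^*:M_x\to M_x\big)$. The essential geometric input is transversality: at the component of $F$ over $x$, the Cartesian square used in the construction of the characteristic class degenerates to one whose four corners are all $\spec{k}$ and whose structural maps are all identities, so that tracing through the definition of $\mathit{Tr}(\alpha)$ reduces the local contribution, via proper base change and the fully-faithfulness of the push-forward along a closed immersion, to the trace of the endomorphism $\phi_r^*$ acting on the dualizable object $M_x\in\DM_{h,lc}(\spec{k},\QQ)$. This is precisely Olsson's computation of local terms of the motivic Lefschetz-Verdier trace in the transversal isolated fixed point case, available in this generality thanks to Bondarko's weight structure. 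An alternative route, in the spirit of Remark~\ref{rem:baby Lefschetz}, is to stratify $U_0$ using Theorems~\ref{thm:generic rigidity} and~\ref{resolution of singularities}, then use additivity of traces to reduce to constant coefficients on a smooth projective variety, where the identification follows from the classical Lefschetz formula together with Poincar\'e duality. Summing the identified local contributions over all rational points of $U_0$ gives
\[\int L(j_!M_0)=\sum_{x\in U(k_0)}\mathit{Tr}\big(\phi_r^*:M_x\to M_x\big),\]
which together with the first paragraph proves the theorem.
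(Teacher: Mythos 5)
Your first two paragraphs are fine (and essentially reproduce Remark \ref{rem:construction of Frobenius actions}: the Lefschetz--Verdier formula gives $\mathit{Tr}(\phi_r^*\,|\,a_!M)=\int L(j_!M_0)$, and the fixed locus of $\phi_r$ is the reduced discrete scheme $X(k_0)$). The gap is in the third paragraph, which is the whole content of the theorem: you assert that at each transverse isolated fixed point the Lefschetz--Verdier local term equals the \emph{naive} local term $\mathit{Tr}(\phi_r^*:M_x\to M_x)$, ``by tracing through the definition'' using proper base change. This is not formal. The characteristic class $Tr(\alpha)$ is built from the evaluation map $\delta^*c^!(\mathbf{D}(j_!M)\boxtimes j_!M)\to I_F$, whose component at $x$ is a $!$-restriction along the graph and depends on the behaviour of $j_!M$ near $x$, not only on the stalk $M_x$; transversality of the graph of Frobenius with the diagonal does not by itself collapse it to the stalkwise trace for an arbitrary locally constructible coefficient. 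The same issue invalidates your treatment of the boundary points: $j_!M$ is \emph{not} ``killed near $x$'' for $x\in X(k_0)\setminus U(k_0)$ lying in the closure of $U$ (only its stalk vanishes), so neither of your two quick arguments shows the vanishing of those local terms. Moreover, the references you invoke do not cover this step: Olsson's computation used in the paper (\cite[Prop.~5.5]{Olsson2}) concerns correspondences given by algebraic cycles on smooth projective varieties, moved to be transverse to the graph of Frobenius and to the diagonal --- not arbitrary constructible motivic coefficients --- and Bondarko's weight structure does not ``extend Olsson's computation to this generality.''

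The paper's proof is organized precisely to avoid the pointwise identification you assume. Both sides of the formula are additive in $M_0$ (additivity of traces), hence depend only on the class of $M_0$ in $K_0(\DM_{h,lc}(U_0,\QQ))$; Bondarko's weight structure then writes any such class as a $\ZZ$-linear combination of classes of direct factors of $p_*(\QQ)(n)[i]$ with $p:Y_0\to U_0$ projective and $Y_0$ smooth projective; for those, the projector is a cycle on $Y\times Y$ which may be moved, up to rational equivalence, to be transverse to the graph of Frobenius and the diagonal, and then Olsson's local-term computation together with the classical Lefschetz formula for constant coefficients concludes. Your proposed ``alternative route'' (stratify $U_0$ and use additivity to reduce to constant coefficients on a smooth projective variety) runs into the same wall: stratification plus additivity only reduces to dualizable coefficients on locally closed, generally non-proper strata; getting from there to (direct factors of) motives of smooth projective varieties is exactly what the $K_0$/weight argument provides, and even then one must handle the projector via Olsson's cycle-theoretic local terms rather than the constant-coefficient case. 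As written, your argument assumes the theorem's essential difficulty rather than proving it.
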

\begin{proof}
The case where $M_0=\QQ$ is constant is well known
(see Remark \ref{rem:baby Lefschetz}).
This proves the case where $M_0=p_!(\QQ)$ for a map $p:Y_0\to U_0$.
The case of a direct factor of $p_!(\QQ)$ with $Y_0$ smooth and projective
can be proved in the same way: the projector defining our motive
is then given by some $\mathrm{dim}(Y)$-dimensional cycle $\alpha$
on $Y\times Y$ supported on $Y\times_X Y$ (see Theorem \ref{cycle class}).
%Since the cycle is seen up to rational equivalence, we
%may assume that the support of $\alpha$ is transverse to
%the graph of the geometric Frobenius and to the diagonal.
We then observe that the Grothendieck-Lefschetz fixed point formula holds
(using proper base change formula and
Olsson's computation of local terms \cite[Prop. 5.5]{Olsson2}).
On the other hand, we see that the shift $[i]$ and the Tate twist $(n)$
are compatible with traces (they consist in multiplying by $(-1)^i$ and
by $1$, respectively).
By the additivity of traces,
we are comparing two numbers which only depend on the
class of $M_0$ in the Grothendieck group $K_0(\DM_{h,lc}(U_0,\QQ))$,
and it is sufficient to consider the case where $U_0=X_0$ is projective.
Using Bondarko's theory of motivic weights \cite[Prop. 3.3]{Bondarko},
we see that any class in $K_0(\DM_{h,lc}(U_0,\QQ))$ is a linear
combination of classes of motives which are
direct factors of $p_*(\QQ)(n)[i]$ for $n,i\in\ZZ$ and $p:Y_0\to U_0$
a projective morphism, with $Y_0$ smooth and projective.
This proves the formula in general.
\end{proof}
\begin{rem}
When $M_0=p_!(\QQ)$, with $p:Y_0\to U_0$ separated of finite type,
the Grothendieck-Lefschetz Formula expresses the trace of the Frobenius
action on cohomology with compact support of $Y$ as a sum of the
traces of the action of Frobenius on cohomology with compact support
of the fibers $Y_x$ of $Y$ over each $k_0$-rational point $x$ of $U$.
One can do similar constructions replacing the geometric Frobenius
action by any (functorially given) automorphism of $k$-schemes,
such as the identity.
The computation of the local terms given by the Lefschetz-Verdier Trace
Formula can then be
rather involved. For instance,
in the case of the identity (which means that we want
to compute Euler-Poincar\'e characteristic of cohomology with
compact support), the naive formula tends to fail (at least in
positive characteristic). The Grothendieck-Ogg-Shafarevitch
Formula is such a non-trivial computation in the case where $M_0$ is
dualizable on a smooth curve $U_0$: it measures the deffect of the
naive formula in terms of Swan conductors.
\end{rem}
\bibliographystyle{plain}
\bibliography{traces_LNM}
\printindex
\end{document}